\newtheorem{theorem}{Theorem}
\newtheorem{proposition}[theorem]{Proposition}
\newtheorem{lemma}[theorem]{Lemma}
\newtheorem{definition}[theorem]{Definition}
\newtheorem{corollary}[theorem]{Corollary}
\newtheorem{example}[theorem]{Example}
\newtheorem{remark}[theorem]{Remark}
\newenvironment{proof}{\noindent{\bf Proof. }}{\par}
\newcommand{\C}{{\mathbb C}}
\newcommand{\R}{{\mathbb R}}
\newcommand{\Z}{{\mathbb Z}}
\renewcommand{\ll}{{\langle}}
\newcommand{\rr}{{\rangle}}
\newcommand{\Res}{{\mathrm{Res}}}
\newcommand{\CL}{{\mathcal{L}}}
\newcommand{\Ker}{\operatorname{Ker}}
\newcommand{\vol}{\operatorname{vol}}
\renewcommand{\c}{\mathfrak{c}}
\title{Paradan's wall crossing formula for partition functions and Khovanski-Pukhlikov differential operator.}
\author{Arzu Boysal and Mich{\`e}le Vergne}
\date{March 2008}
\begin{document}
\maketitle \maketitle {\small \tableofcontents}

\section{Introduction}
The function computing the number of ways one can decompose a vector
as a linear combination with nonnegative integral coefficients of a
fixed finite  set of integral vectors is called a {\bf partition
function.} This  problem can be expressed in terms of polytopes as
follows. Let $A$ be a  $r$ by $N$ integral matrix with column
vectors $ \phi_1,\ldots,\phi_N$, and assume that the elements $\phi_k$ generate
the lattice $\Z^r$.
 Let $a\in \Z^r$ be a  $r$-dimensional
integral column vector and let $P(\Phi,a):=\{ y\in {\R}_{\geq 0}^N |
A y= a \}$ be the convex polytope associated to
$\Phi=[\phi_1,\phi_2,\ldots,\phi_N]$ and $a$. The number of ways one
can decompose $a$ as a linear combination with nonnegative integral
coefficients of the vectors $\phi_i$ is the number of integral
points in $P(\Phi,a)$. The function $a\to |P(\Phi,a)\cap \Z^N|$ will
be called the partition function $k(\Phi)(a)$. It is intuitively
clear that $k(\Phi)(a)$ is related to the volume function
$\vol(\Phi)(a)= {\rm volume}(P(\Phi,a))$.  The latter varies
polynomially as a function of $a$, provided  the polytope
$P(\Phi,a)$ does not change  `shape', that is, when $a$ varies in a
chamber $\c$ for $A$. In short, there is a decomposition of $\R^r$
in closure of  chambers $\c_i$   and polynomial functions $v(\Phi,\c_i)$ such that  the function $\vol(\Phi)(a)$
coincide with the
polynomial  function $v(\Phi,\c_i)(a)$ on each cone $\c_i$. Similarly, there exists
quasi-polynomial functions $k(\Phi,\c_i)$ on $\Z^r$  such that  the function  $k(\Phi)(a)$
coincide with  the quasi-polynomial function
$k(\Phi,\c_i)(a)$ on $\c_i\cap \Z^r$.

When $\c_1$ and $\c_2$ are adjacent chambers, P-E.~Paradan
\cite{Par}  gave a remarkable formula for the quasi-polynomial
function $k(\Phi,\c_1)-k(\Phi,\c_2)$ as a convolution of  distributions.
 There is an analogous
formula for $v(\Phi, \c_1)-v(\Phi,\c_2)$. In this note, we give an
elementary algebraic proof of Paradan's  convolution formula for the jumps.
  We also express  $k(\Phi,\c_1)-k(\Phi,\c_2)$ and  $v(\Phi, \c_1)-v(\Phi,\c_2)$
by  one-dimensional residue formulae.  Let
us describe these residue formulae.

Let $\c_1$ and  $\c_2$  be two adjacent chambers  lying on two sides of a hyperplane $W$
(determined by a primitive vector $E$).  Define $\Phi_0=\Phi \cap
W$. The intersection of $\overline{\c_1}$ and  $\overline{\c_2}$ is contained in the
closure of a chamber $\c_{12}$ of $\Phi_0$.

\begin{theorem}
\begin{itemize}
\item Let $v_{12}=v(\Phi_0,\c_{12})$ be the polynomial function on
$W$ associated to the chamber $\c_{12}$ of $\Phi_0$.  Let $V_{12}$
be any polynomial function on $\R^r$ extending $v_{12}$.  Then, if
$\langle E,\c_1\rangle > 0$, we have for $a\in \R^r$
$$
v(\Phi,\c_1)(a)-v(\Phi,\c_2)(a)=\Res_{z=0}
\left(V_{12}(\partial_x)\cdot \frac{e^{\langle a,x+z E\rangle
}}{\prod_{\phi\in \Phi\setminus\Phi_0}\langle \phi,x+zE\rangle
}\right)_{x=0}.
$$\\
\item Suppose $\Phi$ is unimodular.  Let $k_{12}=k(\Phi_0,\c_{12})$ be the
polynomial function on $W$ associated to the chamber $\c_{12}$ of
$\Phi_0$.    Let $K_{12}$ be any polynomial function on $\R^r$ extending $k_{12}$.
 Then,  if
$\langle E,\c_1\rangle > 0$, we have for $a\in \R^r$
$$
k(\Phi,\c_1)(a)-k(\Phi,\c_2)(a)=\Res_{z=0}
\left(K_{12}(\partial_x)\cdot \frac{e^{\langle a,x+z E\rangle
}}{\prod_{\phi\in \Phi\setminus \Phi_0}(1-e^{-\langle \phi,x+z
E\rangle })}\right)_{x=0}.
$$
\end{itemize}
\end{theorem}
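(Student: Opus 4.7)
The plan is to deduce both residue identities from Paradan's convolution formula for the jump, which (as the introduction indicates) the body of the paper establishes by elementary means. The argument proceeds in three stages.

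I would first verify that the right-hand side is independent of the chosen extension. If $V_{12}$ and $V_{12}'$ are two polynomial extensions of $v_{12}$, their difference vanishes on $W$ and therefore factors as $\langle E,\cdot\rangle R(\cdot)$ for some polynomial $R$; the associated differential operator is $\langle E,\partial_x\rangle R(\partial_x)$. Since the integrand
\[
\frac{e^{\langle a,x+zE\rangle}}{\prod_{\phi\in\Phi\setminus\Phi_0}\langle\phi,x+zE\rangle}
\]
depends on $(x,z)$ only through $y=x+zE$, the operator $\langle E,\partial_x\rangle$ acts on it identically to $\partial_z$, and $\Res_{z=0}$ annihilates any total $z$-derivative. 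The identical reasoning handles $K_{12}$ in the quasi-polynomial case.

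Next, with the extension ambiguity removed, I would fix a convenient extension---for instance, after choosing a splitting $\R^r=W\oplus\R E'$ with $\langle E,E'\rangle=1$, take $V_{12}(a):=v_{12}(\mathrm{proj}_W a)$---and match the convolution form of Paradan against the claimed residue. Paradan's identity writes the jump as the convolution on $\R^r$ of the distribution $v_{12}\otimes\delta_W$ with the multi-spline of $\Phi\setminus\Phi_0$, the locally integrable function on $\R^r$ supported on the cone spanned by these vectors whose Laplace transform is $\prod_{\phi\in\Phi\setminus\Phi_0}\langle\phi,\cdot\rangle^{-1}$ (the sign being fixed by $\langle E,\c_1\rangle>0$). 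Rewriting the convolution in generating-function form and inverting the Laplace transform in stages---applying $V_{12}(\partial_x)$ to $e^{\langle a,x\rangle}$ at $x=0$ to recover the wall polynomial factor, and extracting $\Res_{z=0}$ along the transverse direction $\R E$ to recover the transverse convolution---produces exactly the stated formula. The orientation hypothesis $\langle E,\c_1\rangle>0$ is what selects the residue at $z=0$ over any residue at infinity. For the quasi-polynomial case the same manipulation applies verbatim once each factor $\langle\phi,y\rangle$ is replaced by $1-e^{-\langle\phi,y\rangle}$: unimodularity of $\Phi$ forces $\Phi_0$ to be unimodular in $W$, so $k_{12}$ is a genuine polynomial and $K_{12}(\partial_x)$ is a bona fide differential operator, and each factor $1-e^{-\langle\phi,zE\rangle}$ still has a simple zero at $z=0$.

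The principal obstacle is the Leibniz expansion of $V_{12}(\partial_x)$ when acting on $\prod_{\phi\in\Phi\setminus\Phi_0}\langle\phi,x+zE\rangle^{-1}$: many cross terms arise in which some derivatives strike the transverse denominator rather than the exponential, and one must verify that they assemble precisely into the transverse convolution of Paradan rather than producing extraneous contributions. The extension-independence of the first stage is the conceptual lever that makes this matching tractable, since it permits working with a single well-adapted choice of $V_{12}$.
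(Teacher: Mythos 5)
Your proposal does not follow the paper's route, and as written it has a genuine logical gap: you take Paradan's convolution formula for the jump as your starting point, but that formula is precisely what the paper sets out to prove, and the paper proves it \emph{via} the residue formula rather than the other way around. If you intend to lean on ``the body of the paper establishes it by elementary means,'' the argument is circular, since the paper's elementary proof of the convolution form is obtained by first proving the residue identities and then showing (by the Laplace-transform computations you sketch) that ${\rm Pol}$ and ${\rm Par}$ coincide with the relevant convolutions on $V^+$ and $\Gamma_{\geq 0}$. If instead you quote Paradan's Theorem 5.2 from the literature, your stages two and three do reconstruct the paper's Proposition identifying $p*v(\Delta^+,dx)$ with ${\rm Pol}(p,\Delta^+,E)$ and Theorem \ref{convol} identifying the discrete convolution with ${\rm Par}$, and the resulting deduction is valid in outline --- but it is then not an independent proof of the theorem, only a translation of Paradan's statement into residue form.

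What is missing is the actual mechanism by which the paper proves the jump formulas. The paper argues by induction on $|\Phi|$: it removes a vector $\phi$ and compares differential (resp.\ difference) equations on the two sides --- Lemma \ref{lem:diffvc} and Lemma \ref{lem:diffkc} show that $\partial(\phi)$ (resp.\ $D(\phi)$) applied to $v(\Phi,\c_1)-v(\Phi,\c_2)$ (resp.\ $k(\Phi,\c_1)-k(\Phi,\c_2)$) yields the corresponding jump for $\Phi\setminus\{\phi\}$, while Propositions \ref{diffPv} and \ref{diffpar} show the identical behaviour for ${\rm Pol}$ and ${\rm Par}$. The induction forces the difference of the two sides to be constant along $\R\phi$ (resp.\ constant on $\Gamma$), and the proof closes with the boundary condition on $W$: continuity of $\vol(\Phi,dx)$ when $|\Phi|>\dim V$, Lemma \ref{face}, and the restriction Lemmas \ref{restrictionV} and \ref{restrictionuni}. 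None of this appears in your proposal. Two smaller points: your first stage (independence of the extension) is exactly Lemma \ref{lem:Pp}, with the same total-$z$-derivative argument, and is fine; and the ``principal obstacle'' you flag about Leibniz cross terms is a non-issue once the computation is organized as in the paper, namely by writing the convolution as $\bigl(P(\partial_x)\, e^{\langle a,x\rangle}g_x(a)\bigr)_{x=0}$ with the exponential weight absorbed into $g_x$ before any differentiation is performed.
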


In fact we will give a general version of the second part of this
theorem in Section \ref{sec:jumpgeneral}, where $\Phi$ is not
necessarily unimodular.

It is immediate to see that both formulae for the jumps $k(\Phi,
\c_1)-k(\Phi,\c_2)$ and $v(\Phi, \c_1)-v(\Phi,\c_2)$ are related by
the application of a generalized Khovanski-Pukhlikov differential
operator \cite{dm}, \cite{KP92}, \cite {BriVer97}.

We also demonstrate in various examples how to use these formulae
to compute the functions  $v(\Phi,\c)$ and $k(\Phi,\c)$.

\subsection*{List of Notations}
\[\begin{array}{ll}
U &r\mbox{-dimensional real vector space; $x \in U$; $dx$ Lebesgue measure on $U$.}\\
V &\mbox{dual of $U$; $a \in V$; $da$ dual Lebesgue measure on $V$.}\\
\Gamma &\mbox{a lattice in $V$; $\gamma\in \Gamma$.}\\
\Gamma^* &\mbox{dual  lattice in $U$; $  \ll \Gamma,\Gamma^*\rr\subset \Z$.}\\
T(\Gamma),T &\mbox{torus $U/\Gamma^*$.}\\
\Phi &\mbox{a sequence of nonzero  vectors in $\Gamma$ all on one side of a  hyperplane
;$\phi\in \Phi$.}\\
C(\Phi) &\mbox{cone generated by $\Phi$.}\\
<\Phi> &\mbox{vector space generated by $\Phi$.}\\
\Z\Phi  &\mbox{lattice  generated by $\Phi$ in $<\Phi>$.}\\
P(\Phi,a) &\mbox{convex polytope associated to $\Phi$ and $a$.}\\
\vol(\Phi,dx)(a) &\mbox{volume of $P(\Phi,a)$ for the quotient measure $dt/da$.} \\
k(\Phi)(a) &\mbox{number of integral points in $P(\Phi,a)$.}
\end{array}\]  \[\begin{array}{ll}
\c &\mbox{a chamber of $C(\Phi)$.}\\
v(\Phi,dx,\c) &\mbox{polynomial function coinciding with $\vol(\Phi,dx)$ on the cone $\c$.}\\
k(\Phi,\c) &\mbox{quasi-polynomial function coinciding with $k(\Phi)$ on $\c\cap \Gamma$.}\\
K &\mbox{quasi-polynomial function of $a$.}\\
\tau(\gamma) &\mbox{translation operator  $(\tau(\gamma)K)(a)=K(a-\gamma)$.}\\
D(\phi) &\mbox{difference operator  $(D(\phi)K)(a)=K(a)-K(a-\gamma)$.}\\
W &\mbox{hyperplane in $V$ defined for a fixed $E\in U$ by } \{a \in V | \langle a,E\rangle =0\}.\\
\Psi &\mbox{a sequence of   vectors in $\Gamma$  not in $W$; $\psi\in \Psi$.}\\
\Delta^+&\mbox{a sequence of   vectors all on one side of $W$; $\alpha\in \Delta^+$.}\\
\end{array}\]

\section{Partition functions}
\subsection{Definitions and notations}\label{sec:def}
Let $U$ be a  $r$-dimensional real vector space and $V$ be its dual
vector space. We  assume that $V$ is equipped with a lattice
$\Gamma$. We will usually denote by $x$ the variable in $U$ and by
$a$ the variable in $V$. We will see an element $P$ of $S(V)$ both
as a polynomial function on $U$ and a differential operator on $V$
via the relation $P(\partial_a)e^{\langle a,x\rangle }=P(x)e^{\langle a,x\rangle }$.

Let $\Phi=[\phi_1,\phi_2,\ldots,\phi_N]$ be a  sequence  of
non-zero, not necessarily distinct, linear forms on $U$ lying in an
open half space.  Assume that all the $\phi_k\in \Phi$ belong to the lattice
$\Gamma$. We denote by $<\Phi>$ the linear span of $\Phi$. Then
$\Phi$ generates a lattice in $<\Phi>$. We denote this lattice by
$\Z\Phi\subset \Gamma$.

We consider $\R^N$  with  basis $(\omega_1,\ldots,\omega_N)$ and let $A$ be
the linear map from $\R^N$ to the vector space $<\Phi>$ defined by
$A(\omega_k)=\phi_k,\  1\leq k\leq N$. The vectors $\phi_k$ are the
column vectors of the matrix $A$, and the map $A$ is surjective onto
$<\Phi>$. For $a\in <\Phi>$, we consider the convex polytope
$$P(\Phi,a):=\{t=(t_1,t_2, \ldots, t_N)\in  \R_{\geq 0}^N |\,  At=a\}.$$
In other words,
$$P(\Phi,a)=\{t=(t_1,t_2, \ldots, t_N)\in  \R_{\geq 0}^N | \ \sum_i t_i\phi_i=a\}.$$

Any polytope can be realized as a polytope $P(\Phi,a)$.

Let $C(\Phi)\subset <\Phi>$ be the cone generated by $\{\phi_1,\ldots,\phi_N\}$.
The cone $C(\Phi)$ is a pointed polyhedral cone. The dual cone
$C(\Phi)^*$ of $C(\Phi)$ is defined by  $C(\Phi)^*=\{ x\in U  \ |
\ll \phi, x \rr \geq 0 \  \hbox{for all} \ \phi \in \Phi \}$  and
its interior is non--empty. The polytope $P(\Phi,a)$ is empty if $a$
is not in  $C(\Phi)$. If $a\in <\Phi>$ is in the  relative interior of the cone
$C(\Phi)$, then the polytope $P(\Phi,a)$ has dimension $d:=N-\dim(<\Phi>)$.

We choose $dx$ on $<\Phi>^*$ and denote by $da$ the dual measure on
$<\Phi>$. Let  $dt$ be the Lebesgue measure on  $\R^N.$ The vector
space $\Ker(A)=A^{-1}(0)$ is of dimension $d=N-\dim(<\Phi>)$ and it
is equipped with the quotient Lebesgue measure $dt/da$ satisfying
$(dt/da)\wedge da=dt$.  For $a\in <\Phi>,$ $A^{-1}(a)$ is an affine
space parallel to $\Ker(A)$, thus also equipped with the Lebesgue
measure $dt/da$. Volumes of subsets of $A^{-1}(a)$ are computed with
this measure.  In particular we can define  for any $a\in <\Phi>$,
the number $\vol(\Phi)(a,dx)$ as being the volume of the convex set
$P(\Phi,a)$ in the affine space $A^{-1}(a)$ equipped with the
measure $dt/da$. If $dx$ is rescaled by $c>0$, then
$\vol(\Phi)(a,cdx)= c\vol(\Phi)(a,dx)$. By definition, if the
dimension of $P(\Phi,a)$ is less than $d$, $\vol(\Phi)(a,dx)$ is
equal to $0$.

\begin{definition} \label{defi.voluZ}
Let $<\Phi>$ be the subspace of $V$ generated by $\Phi$.
\begin{itemize}
\item  If $a\in <\Phi>$, define  $\vol(\Phi,dx)(a) ={\rm volume}(P(\Phi,a),dt/da).$
\item If $a\in <\Phi>$, define $k(\Phi)(a)=\vert P(\Phi,a)\cap \Z^N \vert.$
\end{itemize}

We extend the definition of the functions $\vol(\Phi,dx)(a)$ and
$k(\Phi)(a)$ as functions on $V$ by defining $\vol(\Phi,dx)(a)=0$ if
$a\notin <\Phi>$, $k(\Phi)(a)=0$ if $a\notin <\Phi>$.
\end{definition}

Clearly, $\vol(\Phi,dx)(a)=0$  if $a$ is not in $C(\Phi)$ and  $k(\Phi,a)=0$ if $a$ is not in $\Z\Phi\cap C(\Phi)$.

In the rest of this article, we will formulate many of our statements when $\Phi$ generates $V$,
as we can always reduce to this case replacing eventually $V$ by $<\Phi>$.

If $\Phi=[\phi_1,\phi_2,\ldots, \phi_r]$ consists of linearly
independent vectors, then the set $P(\Phi,a)$ is just one point
when $a\in C(\Phi)$ and is  empty when $a$ is not in the closed
cone $C(\Phi)$. Thus the function $\vol(\Phi)(a,dx)$ is just  the
characteristic function of the closed cone $C(\Phi)$  multiplied
by $|\det(\Phi)|^{-1}$ where the determinant is computed with
respect to the Lebesque measure $da$.
 Similarly, the function
$k(\Phi)(a)$ is the characteristic function of $C(\Phi)\cap
\sum_{i=1}^r\Z \phi_i$.

\begin{lemma}\label{lem:conV}
Assume $\Phi=\Phi'\cup \{\phi\}$ where $\Phi'$ generates $<\Phi>$. Then
 $$\vol(\Phi,dx)(a)=\int_{t\geq 0} \vol(\Phi',dx)(a-t\phi)dt$$
for any $a\in V$.
\end{lemma}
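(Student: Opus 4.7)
The plan is a direct Fubini computation, slicing the polytope $P(\Phi,a)$ along the coordinate $t_N$ corresponding to the adjoined vector $\phi$. First, I would dispose of the case $a\notin\langle\Phi\rangle$: since $\phi\in\langle\Phi\rangle=\langle\Phi'\rangle$, one has $a-t\phi\notin\langle\Phi'\rangle$ for every $t\in\R$, so both sides of the claimed identity vanish by the extension convention in Definition~\ref{defi.voluZ}. Assume now $a\in\langle\Phi\rangle$, and write $\Phi=[\phi_1,\dots,\phi_{N-1},\phi]$ with $\Phi'=[\phi_1,\dots,\phi_{N-1}]$. The horizontal slice of $P(\Phi,a)$ at height $t_N=t$ is naturally identified with $P(\Phi',a-t\phi)\times\{t\}$, and this slice is empty unless $t\geq 0$ and $a-t\phi\in C(\Phi')$.

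Next I would check the compatibility of the quotient Lebesgue measures. Let $A:\R^N\to\langle\Phi\rangle$ and $A':\R^{N-1}\to\langle\Phi\rangle$ be the surjections with column sequences $\Phi$ and $\Phi'$; their kernels have dimensions $d$ and $d-1$. Since $\Phi'$ already generates $\langle\Phi\rangle$, the vector $e_N\in\R^N$ can be added to any basis of $\ker A'\subset \R^{N-1}\times\{0\}$ to obtain a basis of $\ker A$ (after subtracting a suitable lift of $\phi$), and writing out the defining relations $(dt_1\cdots dt_N/da)\wedge da=dt_1\cdots dt_N$ and its analogue for $A'$ gives the fiberwise factorization
$$\frac{dt_1\cdots dt_N}{da}\Big|_{A^{-1}(a)}\;=\;\frac{dt_1\cdots dt_{N-1}}{da}\Big|_{A'^{-1}(a-t\phi)}\,\wedge\,dt_N.$$

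Combining the slicing with this measure decomposition and applying Fubini yields
$$\vol(\Phi,dx)(a)=\int_{P(\Phi,a)}\frac{dt_1\cdots dt_N}{da}=\int_0^\infty\!\Bigl(\int_{P(\Phi',a-t\phi)}\frac{dt_1\cdots dt_{N-1}}{da}\Bigr)dt=\int_0^\infty \vol(\Phi',dx)(a-t\phi)\,dt,$$
which is the desired identity. The geometric slicing is essentially tautological; the step I would expect to require the most care is the measure-theoretic identity relating the two quotient Lebesgue measures on the affine fibers of $A$ and $A'$, since this is where one must use the hypothesis that $\Phi'$ still generates $\langle\Phi\rangle$.
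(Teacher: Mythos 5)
Your proposal is correct and follows exactly the paper's argument: the paper also decomposes $\Phi=[\phi,\Phi']$, writes $P(\Phi,a)=\{[t,t'];\ t\geq 0,\ t'\in P(\Phi',a-t\phi)\}$, and concludes by Fubini. The only difference is that you spell out the compatibility of the quotient Lebesgue measures, which the paper leaves implicit in the Fubini step; this is a worthwhile verification but not a different route.
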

\begin{proof}
Indeed, decompose $\Phi=[\phi,\Phi']$. Then
$$P(\Phi,a)=\{[t,t']; t\geq 0,  t'\in P(\Phi',a-t \phi)\}.$$
The proof follows by Fubini.
\end{proof}\bigskip\bigskip

By induction, we obtain the following corollary.
\begin{corollary}\label{co:continuous}
If $\Phi$ consists of linearly independent vectors, the function
$\vol(\Phi,dx)(a)$ is continuous on $C(\Phi)$.

If $\Phi$ generates $V$ with $|\Phi|>\dim V$, then the function
$\vol(\Phi,dx)(a)$ is continuous on $V$.
\end{corollary}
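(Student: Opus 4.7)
The first assertion is immediate from the observation given just before Lemma~\ref{lem:conV}: when $\Phi$ is linearly independent, $\vol(\Phi,dx)$ equals $|\det(\Phi)|^{-1}$ times the characteristic function of $C(\Phi)$, which is constant (and so continuous) on $C(\Phi)$.

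For the second assertion, my plan is to induct on $N=|\Phi|$, peeling off one vector at a time using Lemma~\ref{lem:conV}. Since $\Phi$ spans $V$ with $N>\dim V$, it is linearly dependent; picking any nonzero coefficient in a dependence relation, I can remove the corresponding $\phi\in\Phi$ while leaving $\Phi':=\Phi\setminus\{\phi\}$ still spanning $V$. Lemma~\ref{lem:conV} then provides
$$\vol(\Phi,dx)(a)=\int_0^{\infty}\vol(\Phi',dx)(a-t\phi)\,dt.$$
The open half-space hypothesis on $\Phi$ furnishes a linear form $\xi\in U$ with $\langle\phi',\xi\rangle>0$ for every $\phi'\in\Phi$; since the integrand vanishes unless $a-t\phi\in C(\Phi')$, the $t$-integration is confined to the interval $[0,\langle a,\xi\rangle/\langle\phi,\xi\rangle]$, which is bounded and varies continuously with $a$, and whose length is uniformly bounded as $a$ ranges over any compact set.

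In the inductive step ($N\geq\dim V+2$), $\Phi'$ still spans $V$ and satisfies $|\Phi'|>\dim V$, so the inductive hypothesis supplies continuity (and hence local boundedness) of $\vol(\Phi',dx)$. Dominated convergence applied on the uniformly bounded $t$-interval above then transfers continuity from $\vol(\Phi',dx)$ to $\vol(\Phi,dx)$.

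The main obstacle is the base case $N=\dim V+1$, where $\Phi'$ is a basis and the integrand $|\det(\Phi')|^{-1}\chi_{C(\Phi')}(a-t\phi)$ is itself discontinuous, so dominated convergence does not apply directly. Here I would analyse the set $I(a):=\{t\geq 0:a-t\phi\in C(\Phi')\}$ explicitly: writing $a=\sum_i a_i\phi'_i$ and $\phi=\sum_i b_i\phi'_i$ in the basis $\Phi'$, the conditions $a_i-tb_i\geq 0$ together with $t\geq 0$ cut $I(a)$ out of the $t$-line as an intersection of closed half-lines, hence as a (bounded) interval whose endpoints are maxima and minima of linear forms in $a$. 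The length $|I(a)|$ is therefore the positive part of a continuous function of $a$, and $\vol(\Phi,dx)(a)=|\det(\Phi')|^{-1}|I(a)|$ is continuous on $V$, completing the base case.
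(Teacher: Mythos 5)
Your overall architecture matches what the paper intends (it offers no proof beyond ``by induction'' from Lemma~\ref{lem:conV}): the first assertion is indeed immediate, and your inductive step (a $t$-integral over an interval of continuously varying, locally uniformly bounded length, plus dominated convergence) is sound whenever the inductive hypothesis is available. The genuine gap is in your base case, at the sentence claiming that each condition $a_i-tb_i\geq 0$ cuts out a closed half-line in $t$. This fails when $b_i=0$: the condition then reads $a_i\geq 0$, which does not constrain $t$ at all but switches the whole set $I(a)$ on or off as $a_i$ crosses $0$, so $|I(a)|$ is \emph{not} the positive part of a continuous function of $a$. This is not a repairable oversight, because the second assertion of the Corollary is false as printed in exactly this situation: take $V=\R^2$ and $\Phi=[e_1,e_2,e_1]$, which spans $V$ with $|\Phi|=3>2$; then $P(\Phi,a)$ is the segment $\{t_1+t_3=a_1,\ t_2=a_2,\ t\geq 0\}$ and $\vol(\Phi,dx)(a)=a_1$ on the closed first quadrant and $0$ elsewhere, discontinuous at $(1,0)$.

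What your interval analysis actually proves is continuity under the additional hypothesis that $\Phi\setminus\{\phi\}$ spans $V$ for every $\phi\in\Phi$ (equivalently, every hyperplane spanned by elements of $\Phi$ misses at least two of them): for $N=r+1$ this forces all the $b_i$ to be nonzero, and your endpoints are then genuinely continuous in $a$. That weaker statement is also all the paper ever uses --- in the proof of Theorem~\ref{jumpvol} the Corollary is invoked only to get continuity across a wall $W$ with at least two elements of $\Phi$ off $W$, the one-element case having been treated separately. Two caveats if you add that hypothesis: it is not inherited by $\Phi\setminus\{\phi\}$ (e.g.\ $[e_1,e_1,e_2,e_2]$), so the inductive step cannot simply cite continuity of $\vol(\Phi',dx)$ and must instead argue that the integral of a locally bounded function whose discontinuity locus is a union of hyperplanes transverse to $\phi$ is still continuous; and you should state explicitly which of the two formulations (global continuity under the strengthened hypothesis, or local continuity across a single wall $W$ with $|\Phi\setminus W|\geq 2$) you intend to feed into Theorem~\ref{jumpvol}.
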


For an element $\gamma$ in $V$, define the translation operator
$\tau(\gamma)$ on functions $k(a)$ on $V$ by the formula: if $a\in
V$, then
$$(\tau(\gamma) k)(a)=k(a-\gamma).$$
The difference
operator $D(\gamma)=1-\tau(\gamma)$  acts on functions $k(a)$ on $V$ by the
formula:
$$(D(\gamma) k)(a)=k(a)-k(a-\gamma).$$

The following lemma is obvious from the definition.
\begin{lemma}
Let $\phi\in \Phi$ and $a\in \Gamma$. Then
$$k(\Phi)(a)=\sum_{n=0}^{\infty} k(\Phi\setminus \{\phi\})(a-n\phi).$$
\end{lemma}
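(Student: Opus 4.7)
The plan is to prove this by direct stratification of the lattice points of $P(\Phi,a)$ according to the coordinate indexed by $\phi$. Fix an index $j$ with $\phi_j=\phi$ and write the coordinates of $\R^N$ as $(t_j,t')$ where $t'=(t_i)_{i\neq j}$. A lattice point $(t_1,\ldots,t_N)\in P(\Phi,a)\cap \Z^N$ is precisely the data of a nonnegative integer $n:=t_j$ together with a nonnegative integer tuple $t'$ satisfying $\sum_{i\neq j} t_i\phi_i = a-n\phi$; that is, $t'\in P(\Phi\setminus\{\phi\},a-n\phi)\cap \Z^{N-1}$.

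From this bijective decomposition one obtains
$$|P(\Phi,a)\cap \Z^N| = \sum_{n=0}^{\infty} |P(\Phi\setminus\{\phi\},a-n\phi)\cap \Z^{N-1}|,$$
which, recalling the definition $k(\Phi)(a)=|P(\Phi,a)\cap \Z^N|$ and $k(\Phi\setminus\{\phi\})(a-n\phi)=|P(\Phi\setminus\{\phi\},a-n\phi)\cap \Z^{N-1}|$, is exactly the claimed identity.

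The only point worth checking is that the infinite sum makes sense, i.e.\ has only finitely many nonzero terms. This follows from the assumption that $\Phi$ lies in an open half-space: choose $\xi\in U$ with $\langle \phi_i,\xi\rangle >0$ for every $\phi_i\in \Phi$. For any $t\in P(\Phi,a)$ one has $t_j\langle \phi,\xi\rangle \leq \sum_i t_i \langle \phi_i,\xi\rangle = \langle a,\xi\rangle$, so $n=t_j$ is bounded by $\langle a,\xi\rangle/\langle \phi,\xi\rangle$. For $n$ exceeding this bound, $P(\Phi\setminus\{\phi\},a-n\phi)$ is empty and the corresponding term vanishes, so the sum is in fact finite and the identity holds pointwise.

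There is no real obstacle here; the only thing to be careful about is the convention that $k(\Phi\setminus\{\phi\})(b)=0$ when $b\notin \langle \Phi\setminus\{\phi\}\rangle$, which is built into Definition~\ref{defi.voluZ}, so the formula is valid even when removing $\phi$ drops the rank of the span.
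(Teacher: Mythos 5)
Your proof is correct and is exactly the argument the paper has in mind: the paper states this lemma as ``obvious from the definition,'' and your stratification of $P(\Phi,a)\cap\Z^N$ by the coordinate $t_j$ attached to $\phi$ is the discrete analogue of the decomposition used in Lemma~\ref{lem:conV}. Your additional checks (finiteness of the sum via the open half-space hypothesis, and the convention when $a-n\phi\notin\,<\Phi\setminus\{\phi\}>$) are welcome but do not change the substance.
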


The following relation follows immediately.

\begin{lemma}\label{diffk}
Let $\phi\in \Phi$ and $a\in \Gamma$. Then
$$(D(\phi)k(\Phi))(a)=k(\Phi\setminus \{\phi\})(a).$$
In particular, $(D(\phi)k(\Phi))(a)$ is equal to $0$ if $a$ is not
in the subspace of $V$ generated by $\Phi\setminus \{\phi\}$.
\end{lemma}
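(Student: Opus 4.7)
The plan is to derive the identity by a one-line telescoping argument based on the previous lemma. Writing the formula of that lemma for $a$ gives
$$k(\Phi)(a)=\sum_{n=0}^{\infty} k(\Phi\setminus \{\phi\})(a-n\phi),$$
and applying the same formula at $a-\phi$ and reindexing the sum by $m=n+1$ yields
$$k(\Phi)(a-\phi)=\sum_{m=1}^{\infty} k(\Phi\setminus \{\phi\})(a-m\phi).$$
Subtracting the second equality from the first kills all terms with $n\ge 1$ and leaves exactly the $n=0$ term, which is $k(\Phi\setminus \{\phi\})(a)$. By the very definition of $D(\phi)$, the left-hand side of this subtraction is $(D(\phi)k(\Phi))(a)$, giving the desired formula.

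One should briefly justify that both sums are well defined. Because $\Phi$ lies in an open half space, there exists $x\in U$ with $\langle \phi_i,x\rangle >0$ for all $i$, and hence $P(\Phi\setminus\{\phi\},b)=\emptyset$ as soon as $\langle b,x\rangle<0$; since $\langle a-n\phi,x\rangle\to -\infty$ as $n\to\infty$, the series is in fact a finite sum, and the telescoping is legitimate.

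For the last assertion, observe that if $a$ does not lie in the subspace $\langle\Phi\setminus\{\phi\}\rangle$ of $V$ generated by $\Phi\setminus\{\phi\}$, then any $t=(t_i)_{i\ne k}\in\R_{\ge 0}^{N-1}$ with $\sum_{i\ne k} t_i\phi_i=a$ would force $a$ into that subspace; hence $P(\Phi\setminus\{\phi\},a)$ is empty and $k(\Phi\setminus\{\phi\})(a)=0$, so $(D(\phi)k(\Phi))(a)=0$.

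The whole argument is routine; no step is really an obstacle. The only point that deserves a word of care is the convergence/finiteness of the infinite sum appearing in the previous lemma, which is the standing assumption that $\Phi$ is contained in an open half space — this is what makes the telescoping rigorous and also what guarantees that $k(\Phi)(a)$ is finite to begin with.
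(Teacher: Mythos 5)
Your proof is correct and is exactly the argument the paper intends: the paper simply states that the identity "follows immediately" from the summation formula $k(\Phi)(a)=\sum_{n\ge 0}k(\Phi\setminus\{\phi\})(a-n\phi)$, and your telescoping of that formula at $a$ and $a-\phi$ is the standard way to make this explicit. Your added remarks on the finiteness of the sum (via the open half-space hypothesis) and on the vanishing of $k(\Phi\setminus\{\phi\})(a)$ outside $\langle\Phi\setminus\{\phi\}\rangle$ are accurate and harmless elaborations.
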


\begin{lemma}\label{face}
Assume $\Phi$ generates $V$. Let $W$ be a hyperplane in $V$ such
that $W\cap C(\Phi)$ is a facet of $C(\Phi)$. Let $\Phi_0$ be the
sequence $\Phi\cap W$ which spans $W$. If $a\in W$, then
$k(\Phi)(a)=k(\Phi_0)(a)$.
\end{lemma}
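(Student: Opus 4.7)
The plan is to exploit the defining property of a facet. Since $W\cap C(\Phi)$ is a facet of the pointed polyhedral cone $C(\Phi)$, there exists a linear form $E\in U$ whose zero set on $V$ is precisely $W$ and which is nonnegative on $C(\Phi)$. In particular $\langle E,\phi\rangle \geq 0$ for every $\phi\in \Phi$, with equality if and only if $\phi\in \Phi_0$. Thus every vector in $\Phi\setminus \Phi_0$ lies strictly on one side of $W$.

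Now fix $a\in W$ and pick a lattice point $t=(t_1,\ldots,t_N)\in P(\Phi,a)\cap \Z^N$, i.e.\ $t_i\in \Z_{\geq 0}$ and $\sum_i t_i\phi_i=a$. Applying $E$ gives
$$0=\langle E,a\rangle =\sum_{i=1}^N t_i\langle E,\phi_i\rangle.$$
Each summand is nonnegative, and for $\phi_i\notin \Phi_0$ the coefficient $\langle E,\phi_i\rangle$ is strictly positive. This forces $t_i=0$ for every index $i$ with $\phi_i\notin \Phi_0$, so the surviving coordinates $(t_i)_{\phi_i\in \Phi_0}$ record a lattice point of $P(\Phi_0,a)$.

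Conversely, any lattice point of $P(\Phi_0,a)$ extends by zeros to a lattice point of $P(\Phi,a)$. The assignment $t\mapsto (t_i)_{\phi_i\in \Phi_0}$ is therefore a bijection between $P(\Phi,a)\cap \Z^N$ and $P(\Phi_0,a)\cap \Z^{N_0}$, which after counting yields $k(\Phi)(a)=k(\Phi_0)(a)$.

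The argument has no real obstacle; the only ingredient beyond bookkeeping is the existence of the functional $E$ certifying that $W$ supports a facet, which is guaranteed by the hypothesis and standard convex-geometric considerations about pointed polyhedral cones. All the work is reduced to the observation that the sign condition on $\langle E,\phi\rangle$ kills the unwanted coordinates.
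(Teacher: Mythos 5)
Your proof is correct and follows essentially the same route as the paper: the paper's one-line argument is precisely the assertion that any nonnegative solution of $a=\sum_i y_i\phi_i$ with $a\in W$ must have $y_i=0$ for $\phi_i\notin W$, and you supply the standard justification via the supporting functional $E$ of the facet. Your version is just a more explicit write-up of the same observation, and it also cleanly covers the case $a\in W\setminus C(\Phi)$ where both sides vanish.
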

\begin{proof}
As $W\cap C(\Phi)$ is a facet of $C(\Phi)$, if  $a\in W\cap
C(\Phi)$, any solution of $a=\sum_{i=1}^N y_i\phi_i$ with $y_i\geq
0$ will have $y_i=0$ for $\phi_i\notin W$.
\end{proof}\bigskip

The following lemma is also obtained immediately from Fubini's
theorem applied to the integral $\int_{\R_{\geq 0}^N}e^{-\langle
\sum_{i=1}^N t_i\phi_i,x\rangle }dt_1dt_2\cdots dt_N$ decomposed
along the fibers of the map $A:\R_{\geq 0}^N\to C(\Phi)$, or to the
analogous discrete sum.

\begin{lemma}\label{laplaceV}
For $x$ in the interior of $C(\Phi)^*$,
$$ \int_{C(\Phi)} \vol(\Phi,dx)(a) e^{-\langle a,x\rangle } da =\frac{1}{\prod_{\phi \in \Phi} \langle \phi,x\rangle },$$
$$ \sum_{a\in  C(\Phi) \cap \Gamma} k(\Phi)(a) e^{-\langle a,x\rangle }=
\frac{1}{\prod_{\phi \in \Phi}1-e^{-\langle \phi,x\rangle }}.$$
\end{lemma}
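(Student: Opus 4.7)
The plan is to evaluate the auxiliary integral $I(x) = \int_{\R_{\geq 0}^N} e^{-\langle At, x\rangle} dt$ and its discrete analogue $S(x) = \sum_{t \in \Z_{\geq 0}^N} e^{-\langle At, x\rangle}$ in two different orders, exactly as suggested in the hint preceding the statement. The hypothesis that $x$ lies in the interior of $C(\Phi)^*$ makes each $\langle \phi, x\rangle$ strictly positive, which is precisely what guarantees absolute convergence and allows free use of Fubini in both directions.

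First I would handle the volume identity. Factoring $\langle At, x\rangle = \sum_{i=1}^N t_i \langle \phi_i, x\rangle$ across coordinates turns $I(x)$ into a product of one-dimensional integrals, each evaluating to $\langle \phi_i, x\rangle^{-1}$, and yielding $\prod_{\phi \in \Phi} \langle \phi, x\rangle^{-1}$. Then I would re-evaluate $I(x)$ by slicing along the fibers of the map $A : \R_{\geq 0}^N \to C(\Phi)$: using the measure decomposition $dt = (dt/da) \wedge da$ set up in Section \ref{sec:def}, together with the fact that $\langle At, x\rangle = \langle a, x\rangle$ is constant on each fiber, Fubini produces an iterated integral whose inner integral over $A^{-1}(a) \cap \R_{\geq 0}^N$ with respect to $dt/da$ is precisely $\vol(\Phi, dx)(a)$ by Definition \ref{defi.voluZ}. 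Since $\vol(\Phi, dx)$ vanishes outside $C(\Phi)$, equating the two evaluations gives the first formula.

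Next I would run the same argument for $S(x)$ with sums replacing integrals. Summing geometric series coordinate-wise yields $S(x) = \prod_{\phi \in \Phi}(1 - e^{-\langle \phi, x\rangle})^{-1}$, while grouping lattice points $t \in \Z_{\geq 0}^N$ by their image $a = At \in \Z\Phi \subset \Gamma$ shows that the number of $t$'s in the fiber over $a$ is by definition $k(\Phi)(a)$, producing $\sum_{a \in \Gamma} k(\Phi)(a) e^{-\langle a, x\rangle}$. Restricting the outer sum to $C(\Phi) \cap \Gamma$ costs nothing since $k(\Phi)$ vanishes elsewhere, and the second identity follows.

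There is really no serious obstacle; the only nontrivial point is the absolute convergence justifying both applications of Fubini, which is immediate from the positivity of each $\langle \phi, x\rangle$ on the interior of $C(\Phi)^*$. In particular, the two statements are really one argument carried out once with Lebesgue measure on $\R^N$ and once with counting measure on $\Z^N$.
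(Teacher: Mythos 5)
Your proposal is correct and is precisely the argument the paper intends: it states that the lemma follows from Fubini applied to $\int_{\R_{\geq 0}^N}e^{-\langle \sum_i t_i\phi_i,x\rangle}dt$ decomposed along the fibers of $A$, and to the analogous discrete sum. You have simply written out the two evaluations (coordinate-wise product versus fiber decomposition) in full detail, with the convergence justification supplied by $\langle\phi,x\rangle>0$ on the interior of $C(\Phi)^*$.
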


\subsection{Chambers and the qualitative behavior of partition functions}
In this section, we assume that $\Phi$ generates $V$. For any  subset
$\nu$  of   $\Phi$,  we denote by $C(\nu)$ the closed cone generated
by $\nu$. We denote by $C(\Phi)_{\rm sing}$ the union of the cones
$C(\nu)$  where $\nu$ is any subset of $\Phi$ of cardinality
strictly less than $r=\dim(V)$. By definition, the set
$C(\Phi)_{{\rm reg}}$ of $\Phi$-regular elements is the complement
of  $C(\Phi)_{\rm sing}$. A connected component of $C(\Phi)_{\rm
reg}$ is called a  chamber.  We remark that,  according to our
definition, the exterior of $C(\Phi)$ is itself a chamber denoted by
$\c_{{\rm ext}}$. The chambers contained in $C(\Phi)$ will be called
interior chambers. If  $\c$ is  a chamber, and $\sigma$ is a basis
of $V$ contained in $\Phi$, then either $\c\subset C(\sigma)$, or
$\c\cap C(\sigma)=\emptyset$, as the boundary of $C(\sigma)$ does
not intersect $\c$.

Let $\Phi'\subset \Phi$ be such that  $\Phi'$ generates $V$.  If $\c$ is a
chamber for $\Phi$, there exists a unique chamber $\c'$ for $\Phi'$
such that $\c\subset \c'$.

A wall of $\Phi$ is a (real) hyperplane generated by $r-1$ linearly
independent elements of $\Phi$. It is clear that the boundary of a
chamber $\c$ is contained in  an union  of walls.

We now define the notion of a quasi-polynomial function on the lattice $\Gamma$.
Let  $\Gamma^*$ be the dual lattice
of $\Gamma$. An element
$x\in U$ gives rise to the exponential function $e_x(a)=e^{2i\pi
\langle x,a\rangle }$ on $\Gamma$.
 Remark that the function
$e_x(a)$ depends only of the class of $x$ (still denoted by $x$) in the torus
$T(\Gamma):=U/\Gamma^*$.

Let $M$ be a positive integer. A quasi-polynomial function with
period $M$ on $\Gamma$ is a function $K$ on $\Gamma$ of the form
$K(a)=\sum_{x\in F}e_x(a) P_x(a)$ where $F$ is a finite set of
points of $U$ such that $MF\subset \Gamma^*$ and $P_x$ are
polynomial functions on $V$. Then the restriction of the function
$K$ to cosets $h+M\Gamma$ of $\Gamma/M\Gamma$ coincide with the
restriction to $h+M\Gamma$ of a polynomial function on $V$. If the
degree of the polynomial $P_x(a)$ is less or equal to $k$ for all
$x\in F$, we say that $K$ is a quasi-polynomial function of degree
$k$ and period $M$.

If $\Gamma=\Z$ and  $ \gamma\in \C^* $ is a $M^{th}$ root of unity,
the function $n\mapsto n^k \gamma ^n$ is a quasi-polynomial function
on $\Z$ of period $M$ and degree $k$.

If $C$ is an affine  closed cone in $V$ with non empty interior, a
quasi-polynomial function on $\Gamma$ vanishing on $\Gamma\cap C$ is identically equal to $0$ on $\Gamma$.

If $\gamma \in \Gamma$, the difference operator $D(\gamma)k(a)=
k(a)-k(a-\gamma)$ leaves the space of  quasi-polynomial functions on
$\Gamma$ stable.

The following theorem is well known (see
\cite{dm},\cite{BriVer97},\cite{Sze-ve}, \cite{dp}). See  yet
another  proof in the forthcoming article \cite{ve08}.
%We denote by $Z(\Phi)=\sum_{k=1}^N[0,1}\phi_k$ the zonotope generated by $\Phi$.
\begin{proposition}\label{qualitative}
Let $\c$ be an interior chamber of $C(\Phi)$.
\begin{itemize}
\item
There exists a unique homogeneous polynomial function $v(\Phi,dx,\c)$ of degree $d$ on $V$
such that, for $ a\in \overline{\c}$,
$$\vol(\Phi,dx)(a)=v(\Phi,dx,\c)(a).$$
\item
There exists a unique quasi-polynomial function $k(\Phi,\c)$ on
$\Gamma$ such that,  for $ a\in \overline{\c}\cap \Gamma$,
$$k(\Phi)(a)=k(\Phi,\c)(a).$$
\end{itemize}
\end{proposition}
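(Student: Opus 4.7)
The plan is to prove both parts simultaneously by induction on the excess $d=|\Phi|-r$, exploiting the reduction formulae in Lemmas \ref{lem:conV} and \ref{diffk} together with the remark that every chamber $\c$ of $\Phi$ lies inside a unique chamber $\c'$ of any sub-sequence $\Phi'\subset\Phi$ that still generates $V$.

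In the base case $d=0$, $\Phi$ is a basis of $V$ and there is a single interior chamber $\c=\operatorname{int}(C(\Phi))$. By the remark following Definition \ref{defi.voluZ}, $\vol(\Phi,dx)$ equals the constant $|\det\Phi|^{-1}$ on $C(\Phi)$, a homogeneous polynomial of degree $0=d$; while $k(\Phi)$ is the characteristic function of the sublattice $\sum_i\Z\phi_i\subset\Gamma$ intersected with $C(\Phi)$. Fourier expansion over the finite group $\Gamma/\sum_i\Z\phi_i$ of order $M$ writes this characteristic function as $M^{-1}\sum_\chi\chi(a)$, exhibiting it as a quasi-polynomial of degree $0$ and period $M$.

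For the inductive step pick $\phi\in\Phi$ with $\Phi'=\Phi\setminus\{\phi\}$ still generating $V$, and fix an interior chamber $\c$ of $\Phi$. As $a$ varies in $\c$, the ray $\{a-t\phi:t\geq 0\}$ meets a finite sequence of chambers $\c'_0,\c'_1,\ldots,\c'_m$ of $\Phi'$ in a combinatorial pattern constant on $\c$, with crossing times $t_j(a)$ depending affinely on $a$ (they solve $\langle W_j,a-t\phi\rangle=0$ for the wall $W_j$ separating $\c'_{j-1}$ from $\c'_j$). For the volume, Lemma \ref{lem:conV} gives
\[
\vol(\Phi,dx)(a)=\sum_{j=0}^{m}\int_{t_j(a)}^{t_{j+1}(a)} v(\Phi',dx,\c'_j)(a-t\phi)\,dt,
\]
and, by the inductive hypothesis, each $v(\Phi',dx,\c'_j)$ is a homogeneous polynomial of degree $d-1$. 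Integrating a polynomial in $t$ against endpoints affine in $a$ produces a polynomial in $a$, and the total degree and homogeneity come out to $d$.

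For the partition function, apply the iterated identity $k(\Phi)(a)=\sum_{n\geq 0}k(\Phi')(a-n\phi)$ and split the sum according to which $\c'_j$ contains $a-n\phi$. The problem reduces to showing that a sum of the form $\sum_{n=n_j(a)}^{n_{j+1}(a)-1}Q(a,n)$, with $Q$ a quasi-polynomial supplied by induction and the endpoints $n_j(a)$ affine in $a$, is itself quasi-polynomial on $\c\cap\Gamma$. The main obstacle is the floor functions implicit in the integer endpoints $n_j(a)$: an affine function of $a$ need not be integer-valued. I would handle this by enlarging the period to a common multiple of the inductive period and of the denominators appearing in the equations for $W_j$ and in $\phi$, so that each $n_j(a)$ becomes integer-valued on every coset of the enlarged lattice; a Bernoulli-type summation formula applied coset by coset then produces the required quasi-polynomial. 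Uniqueness of both $v(\Phi,dx,\c)$ and $k(\Phi,\c)$ is automatic from the already-quoted fact that $\overline{\c}$ has nonempty interior, so any polynomial, or any quasi-polynomial, vanishing on $\overline{\c}\cap\Gamma$ must vanish identically on $V$, respectively $\Gamma$.
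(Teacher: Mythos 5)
The paper does not actually prove Proposition \ref{qualitative}: it is quoted as a known result, with references to Dahmen--Micchelli, Brion--Vergne, Szenes--Vergne and De Concini--Procesi, so there is no internal proof to compare yours against. Your sketch is essentially the classical Dahmen--Micchelli ``sweep'' argument, inducting on $|\Phi|-r$ via $\vol(\Phi,dx)(a)=\int_0^\infty\vol(\Phi',dx)(a-t\phi)\,dt$ and $k(\Phi)(a)=\sum_{n\ge 0}k(\Phi')(a-n\phi)$; the base case, the uniqueness argument, and the period-enlargement device for the integer endpoints are all sound in outline.

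Two points need attention. First, the assertion that the ray $\{a-t\phi:t\ge 0\}$ crosses a fixed sequence of chambers of $\Phi'$ as $a$ ranges over $\c$ is true but not free: you should observe that for $\nu\subset\Phi'$ with $|\nu|\le r-2$, the set of $a$ whose ray meets $C(\nu)$ is contained in $C(\nu\cup\{\phi\})\subset C(\Phi)_{\rm sing}$, so a $\Phi$-regular $a$ only ever crosses relative interiors of facets of chambers of $\Phi'$, and the crossing pattern is locally constant on the connected set $\c$ (also note the crossing times are linear homogeneous, not merely affine, in $a$ --- you need this for the homogeneity of degree $d$). Second, and more seriously, the Proposition asserts the identity on $\overline{\c}$, respectively $\overline{\c}\cap\Gamma$, whereas your argument as written only delivers it on the open chamber (you explicitly conclude ``quasi-polynomial on $\c\cap\Gamma$''). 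Uniqueness does not repair this: knowing the quasi-polynomial on $\c\cap\Gamma$ says nothing about whether $k(\Phi)$ agrees with it on $\partial\c\cap\Gamma$. For the volume one can invoke Corollary \ref{co:continuous}, but there is no continuity available for $k$, and the closure statement is precisely what the rest of the paper uses (Lemma \ref{face}, and every restriction to the common wall $W\subset\overline{\c_1}\cap\overline{\c_2}$ in the wall-crossing theorems). The gap is repairable within your induction: carry the inductive hypothesis on closures, note that the coset-wise formula $n_j(a)=(\langle u_j,a\rangle+c_j)/q_j$ for the integer endpoints remains exact for $a\in\overline{\c}$, and check that $n_j(a)\le n<n_{j+1}(a)$ forces $t_j(a)\le n\le t_{j+1}(a)$, hence $a-n\phi\in\overline{\c'_j}$, so each block is still evaluated by the correct quasi-polynomial $k(\Phi',\c'_j)$. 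Without some such supplement the argument proves a weaker statement than the one claimed.
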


\begin{remark}\label{uni}
The sequence $\Phi$ is called unimodular if, for any subset $\sigma$
of $\Phi$ forming a basis of $V$,  the subset $\sigma$ is a basis of
$\Z\Phi$. In other words, we have $|\det(\sigma)|=1$,  where the
determinant is computed using the volume $da$ giving volume $1$ to
 a fundamental domain for $\Z\Phi$. In this particular
case, the function $k(\Phi,\c)$ is polynomial on $\Z\Phi$.
\end{remark}

In the next lemma, we list differential equations satisfied by the
polynomial function $v(\Phi,dx,\c)$.

\begin{lemma}\label{lem:diffvc}
Let $\phi\in \Phi$. If $\Phi\setminus \{\phi\}$ does not generate
$V$, then $\partial(\phi)v(\Phi,dx,\c)=0$.

If $\Phi\setminus\{\phi\}$ generates $V$, let $\c'$ be the chamber
of $\Phi\setminus\{\phi\}$ containing $\c$, then $\partial(\phi)
v(\Phi,dx,\c)=v(\Phi\setminus \{\phi\},dx,\c')$.
\end{lemma}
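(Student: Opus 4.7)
The plan is to prove the identity at the level of the piecewise polynomial function $\vol(\Phi,dx)$ on an open set, and then invoke polynomial rigidity to extend. The two cases are handled separately: Lemma~\ref{lem:conV} applies cleanly to the nondegenerate case, while the degenerate one is settled by a direct product decomposition of the polytope $P(\Phi,a)$.

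\textbf{Case 1: $\Phi'=\Phi\setminus\{\phi\}$ generates $V$.} Since $\Phi'$ generates $<\Phi>=V$, Lemma~\ref{lem:conV} gives
$$\vol(\Phi,dx)(a)=\int_0^\infty \vol(\Phi',dx)(a-t\phi)\,dt$$
for all $a\in V$. I would differentiate under the integral, which is legitimate because for $a$ in any compact set the integrand is supported on a bounded interval in $t$: the polytope $P(\Phi',a-t\phi)$ is empty once $a-t\phi$ leaves the cone $C(\Phi')$, and this happens for $t$ sufficiently large because $\Phi'\subset\Phi$ lies in an open half-space while $\phi$ lies in its interior. Using the chain-rule identity $\partial(\phi)\,f(a-t\phi)=-\partial_t f(a-t\phi)$ together with the fundamental theorem of calculus yields
$$\partial(\phi)\,\vol(\Phi,dx)(a)=\vol(\Phi',dx)(a),$$
the boundary term at $t=+\infty$ being zero. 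Restricting to $a\in\c\subset\c'$ identifies this with the polynomial identity $\partial(\phi) v(\Phi,dx,\c)=v(\Phi',dx,\c')$ on the open set $\c$, and two polynomials agreeing on an open set coincide on all of $V$.

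\textbf{Case 2: $\Phi'=\Phi\setminus\{\phi\}$ does not generate $V$.} Since $\Phi$ generates $V$ but $\Phi'$ does not, $\phi\notin V':=<\Phi'>$, giving a direct-sum decomposition $V=V'\oplus\R\phi$. Write $a=a'+s\phi$ uniquely. Projecting the equation $\sum_i y_i\phi_i+t\phi=a$ onto the $\phi$-factor forces $t=s$ (since every $\phi_i$ in $\Phi'$ lies in $V'$), and projecting onto $V'$ reduces the remaining condition to $(y_i)\in P(\Phi',a')$. Hence $P(\Phi,a)\cong P(\Phi',a')$ whenever $s\ge 0$, and tracking the quotient Lebesgue measure gives $\vol(\Phi,dx)(a)=\vol(\Phi',dx')(a')$ for an appropriate measure $dx'$ on $V'^{*}$. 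Since $C(\Phi)\subset\{s\ge 0\}$, this description holds on all of $\c$, and the resulting function is independent of the coordinate $s$; differentiating in the direction $\phi=\partial_s$ gives zero on $\c$, hence $\partial(\phi) v(\Phi,dx,\c)=0$ identically.

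The main obstacle is notational rather than conceptual: one must keep track of the induced quotient measure $dx'$ in Case 2 and justify the interchange of $\partial(\phi)$ and $\int_0^\infty dt$ in Case 1. Both reduce to the same observation already used in the proof of Lemma~\ref{lem:conV}, namely that $\Phi$ lies in an open half-space, so the polytopes involved have bounded support in the direction of $\phi$; the measure bookkeeping in Case 2 is a routine Fubini-type argument using $\ker A=\ker A_{\Phi'}\times\{0\}$.
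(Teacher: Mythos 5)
Your proof is correct and follows essentially the same route as the paper: the degenerate case via the decomposition $V=<\Phi\setminus\{\phi\}>\oplus\,\R\phi$ showing $\vol(\Phi,dx)$ is independent of the $\phi$-coordinate, and the nondegenerate case by differentiating the integral formula of Lemma~\ref{lem:conV} in the direction $\phi$ (the paper phrases this as a difference quotient $\vol(\Phi,dx)(a)-\vol(\Phi,dx)(a-\epsilon\phi)=\int_0^\epsilon\vol(\Phi',dx)(a-t\phi)\,dt$, which is the same fundamental-theorem-of-calculus argument). No substantive differences.
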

\begin{proof}
If $\Phi_0=\Phi\setminus \{\phi\}$ is contained in a wall $W$,
then $V=W\oplus \R \phi$, and it is immediate to see that an
interior chamber $\c$ for $\Phi$ is of the form
$\c=\c_0+\R_{>0}\phi$, where $\c_0$ is a chamber for $\Phi_0$.
If
$a=w+t\phi$ with $w\in W$ and $t>0$, then
$\vol(\Phi,dx,\c)(w+t\phi)=\vol(\Phi_0,dx_0,\c_0)(w)$, with
$dx_0d\phi=dx$. This proves the first statement.

To prove the second statement, if $a\in \c$,  we use the following
relation (as given in Lemma \ref{lem:conV})
$$\vol(\Phi,dx)(a)-\vol(\Phi,dx)(a-\epsilon \phi)=\int_{t= 0}^{\epsilon} \vol(\Phi_0,dx)(a-t\phi)dt.$$
\end{proof}\bigskip

\begin{corollary}\label{cor:dah}
Let $\Phi_0\subset \Phi$ such that $\Phi_0$ does not generate $V$.
Then $$(\prod_{\phi\in
\Phi\setminus\Phi_0}\partial(\phi))v(\Phi,dx,\c)=0.$$
\end{corollary}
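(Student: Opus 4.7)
The plan is to peel off the elements of $\Psi := \Phi \setminus \Phi_0$ one at a time, using Lemma \ref{lem:diffvc} repeatedly, and observe that the process must terminate with a zero by the first (vanishing) case of that lemma before all factors are exhausted.

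More precisely, enumerate $\Psi = \{\psi_1, \psi_2, \ldots, \psi_s\}$ in any order, and set $F_i := \Phi \setminus \{\psi_1, \ldots, \psi_i\}$, so that $F_0 = \Phi$ and $F_s = \Phi_0$. Since $\Phi$ generates $V$ but $\Phi_0$ does not, there is a well-defined smallest index $i^\ast \in \{1, \ldots, s\}$ such that $F_{i^\ast}$ fails to generate $V$, while $F_{i^\ast - 1}$ still does. Starting from $v(\Phi, dx, \c)$, I apply the operators $\partial(\psi_1), \partial(\psi_2), \ldots$ in succession. At each step $i \leq i^\ast - 1$, the set $F_{i-1}$ generates $V$, the chamber $\c$ of $\Phi$ is contained in a unique chamber $\c_{i-1}'$ of $F_{i-1}$, and the second case of Lemma \ref{lem:diffvc} yields
$$
\partial(\psi_i)\, v(F_{i-1}, dx, \c_{i-1}') \;=\; v(F_i, dx, \c_i'),
$$
where $\c_i'$ is the chamber of $F_i$ containing $\c$. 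At step $i = i^\ast$, the set $F_{i^\ast} = F_{i^\ast - 1} \setminus \{\psi_{i^\ast}\}$ does not generate $V$, so the first case of Lemma \ref{lem:diffvc} gives
$$
\partial(\psi_{i^\ast})\, v(F_{i^\ast - 1}, dx, \c_{i^\ast - 1}') \;=\; 0.
$$
Since the remaining factors $\partial(\psi_{i^\ast + 1}), \ldots, \partial(\psi_s)$ are applied to zero and commute with one another, the full product $\prod_{\phi \in \Psi} \partial(\phi)$ annihilates $v(\Phi, dx, \c)$.

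There is essentially no obstacle: the only subtlety is verifying that at each intermediate stage the chamber $\c$ of $\Phi$ genuinely lies inside a chamber of the coarser arrangement $F_i$, but this is the standard fact recalled before Lemma \ref{lem:diffvc} (any chamber of a larger sequence is contained in a unique chamber of a subsequence that still generates $V$). The argument is insensitive to the chosen ordering of $\Psi$, as it should be, since the operators $\partial(\phi)$ commute.
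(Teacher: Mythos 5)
Your argument is correct and is precisely the intended deduction: the paper states Corollary \ref{cor:dah} as an immediate consequence of Lemma \ref{lem:diffvc}, obtained by peeling off the elements of $\Phi\setminus\Phi_0$ one at a time until the first case of that lemma produces zero (which must happen before the factors are exhausted, since $\Phi_0$ does not generate $V$). Your handling of the intermediate chambers and of the commutativity of the operators $\partial(\phi)$ matches the paper's conventions, so there is nothing to add.
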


In the next lemma, we list difference equations   satisfied by the
quasi-polynomial function $k(\Phi,\c)$.

\begin{lemma}\label{lem:diffkc}
Let $\phi\in \Phi$. If $\Phi\setminus \{\phi\}$ does not generate
$V$, then $D(\phi)k(\Phi,\c)=0$.

If $\Phi\setminus\{\phi\}$ generates $V$, let $\c'$ be the chamber
of $\Phi\setminus\{\phi\}$ containing $\c$, then $D(\phi)
k(\Phi,\c)=k(\Phi\setminus\{\phi\},\c')$
\end{lemma}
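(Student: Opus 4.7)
The plan is to parallel the proof of Lemma \ref{lem:diffvc}, replacing $\partial(\phi)$ by $D(\phi)$ and Lemma \ref{lem:conV} by Lemma \ref{diffk}. Both $D(\phi)k(\Phi,\c)$ and the proposed right-hand side are quasi-polynomial functions on $\Gamma$: the left one because $D(\phi)$ preserves quasi-polynomiality, the right one by Proposition \ref{qualitative}. By the uniqueness principle for quasi-polynomials recalled just before Proposition \ref{qualitative}, it is enough to prove the claimed equality on $\Gamma \cap C$ for some affine closed cone $C \subset V$ with non-empty interior.

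I would take $C$ to lie inside the open set $U = \{a \in V : a \in \c \text{ and } a - \phi \in \c\}$. For $a \in U \cap \Gamma$, Proposition \ref{qualitative} applied at $a$ and at $a - \phi$ gives $k(\Phi,\c)(a) = k(\Phi)(a)$ and $k(\Phi,\c)(a - \phi) = k(\Phi)(a - \phi)$, so by Lemma \ref{diffk}
$$(D(\phi) k(\Phi, \c))(a) = k(\Phi)(a) - k(\Phi)(a - \phi) = k(\Phi \setminus \{\phi\})(a).$$
If $\Phi \setminus \{\phi\}$ does not generate $V$, then for $a \notin \langle \Phi \setminus \{\phi\}\rangle$ (a proper subspace) the right-hand side vanishes, giving $(D(\phi) k(\Phi, \c))(a) = 0$. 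If $\Phi \setminus \{\phi\}$ does generate $V$, the remark preceding Proposition \ref{qualitative} gives $\c \subset \c'$, hence $a \in \overline{\c'}$ and $k(\Phi \setminus \{\phi\})(a) = k(\Phi \setminus \{\phi\}, \c')(a)$.

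The last step is to verify that $U$, or $U$ with a proper subspace removed, contains an affine closed cone with non-empty interior. Since $\c$ is an open convex cone, $U$ is open; it is non-empty because for any fixed $a_1 \in \c$ there is $\delta > 0$ with $a_1 + t\phi \in \c$ for $|t| < \delta$, and scaling $a_1 \mapsto s a_1$ stretches this open interval by $s$, so for $s > 1/(2\delta)$ one finds $b \in \c$ with $b - \phi \in \c$. A direct convexity argument shows moreover that $U$ is stable under $a \mapsto \lambda a$ for $\lambda \geq 1$, which combined with openness produces the desired affine cone inside $U$. I expect this minor geometric verification to be the only non-routine point of the proof.
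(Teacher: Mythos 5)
Your proof is correct and takes essentially the same route as the paper's: both apply Lemma \ref{diffk} on an affine subcone of $\overline{\c}$ whose translate by $-\phi$ stays inside $\overline{\c}$ (and, in the degenerate case, avoids the proper subspace $<\Phi\setminus\{\phi\}>$), and then conclude by the rigidity of quasi-polynomials agreeing on a full-dimensional cone of lattice points. The only difference is one of detail: the paper asserts the existence of such a subcone in a single sentence, whereas you verify it explicitly via the stability of $\{a\in\c \mid a-\phi\in\c\}$ under dilations $\lambda\geq 1$.
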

\begin{proof}
By Lemma \ref{diffk}, the function $k(\Phi)$ satisfies $D(\phi)
k(\Phi)=k(\Phi \setminus \{\phi\})$. Considering this relation on an
affine subcone $S$ of $\overline{\c}$ such that $S-\phi$ does not
touch the boundary of $\c$, we obtain the relations of the lemma.
\end{proof}

\section{Two polynomial functions}

\subsection{Residue formula }
Let $\CL$ be the space of Laurent series in one variable $z$:
$$\CL:=\{f(z)=\sum_{k\geq k_0} f_k z^k\}.$$
For $f\in \CL$, we denote by $\Res_{z=0}f(z)$ the coefficient $f_{-1}$ of $z^{-1}$.
If $g$ is a germ of meromorphic function at $z=0$,  then  $g$  gives rise to an element
 of $\CL$ by considering the Laurent series  at $z=0$ and
we still denote by  $\Res_{z=0}g$ its residue at $z=0$.
If $g=\frac{d}{dz}f$, then $\Res_{z=0}g=0$.
\medskip

With the notation of Section \ref{sec:def}, let $E$ be a  vector in
$U$. It defines a hyperplane $W=\{a \in V | \langle a,E\rangle =0\}$
in $V$.
\begin{definition}\label{defPol}
Let $P$ be a polynomial function on  $V$ and let $\Psi$ be a
sequence of vectors not belonging to $W$. We define, for $a\in V$,
\begin{itemize}
\item
${\rm Pol}(P,\Psi,E)(a)=\Res_{z=0} \left(P(\partial_x)\cdot
\frac{e^{\langle a,x+z E\rangle }}{\prod_{\psi\in \Psi}\langle
\psi,x+zE\rangle }\right)_{x=0}.$\\

\item
${\rm Par}(P,\Psi,E)(a)=\Res_{z=0} \left(P(\partial_x)\cdot
\frac{e^{\langle a,x+z E\rangle }}{\prod_{\psi\in
\Psi}(1-e^{-\langle \psi,x+zE\rangle}) }\right)_{x=0}.$
\end{itemize}
\end{definition}

It is easy to see  that  ${\rm Pol}(P,\Psi,E)(a)$ as well as ${\rm
Par}(P,\Psi,E)(a)$  are  polynomial functions of $a\in V$.

\begin{lemma}\label{lem:Pp}
The functions ${\rm Pol}(P,\Psi,E)$ and ${\rm Par}(P,\Psi,E)$ depend
only on the restriction $p$ of $P$ to $W$.
\end{lemma}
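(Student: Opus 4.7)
The plan is to reduce to showing that if $P$ vanishes on $W$, then both ${\rm Pol}(P,\Psi,E)$ and ${\rm Par}(P,\Psi,E)$ are identically zero; by linearity this implies that both polynomials depend only on the restriction $p = P|_W$. Since $W$ is the hyperplane $\{a\in V : \langle a,E\rangle=0\}$, any polynomial $P$ on $V$ vanishing on $W$ is divisible by the linear form $a\mapsto\langle a,E\rangle$, so I can write $P(a) = \langle a,E\rangle\, Q(a)$ for some polynomial $Q$ on $V$. Translating to differential operators with respect to $x\in U$, this factorization reads $P(\partial_x) = \partial_E \, Q(\partial_x)$, where $\partial_E$ denotes the directional derivative in the direction $E$.

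The key observation is that both kernels
$$\frac{e^{\langle a,x+zE\rangle}}{\prod_{\psi\in\Psi}\langle \psi,x+zE\rangle} \quad\text{and}\quad \frac{e^{\langle a,x+zE\rangle}}{\prod_{\psi\in\Psi}(1-e^{-\langle\psi,x+zE\rangle})}$$
depend on $(x,z)$ only through the combination $y = x+zE$. Since any derivative in $x$ of a function $H(x+zE)$ is again of the form $\widetilde{H}(x+zE)$, applying $Q(\partial_x)$ preserves this dependence, so $G(x,z) := Q(\partial_x)\,F(x,z)$ can be written as $G(x,z)=H(x+zE)$ for some $H$. For any such $G$ the chain rule gives $\partial_E G = \partial_z G$, hence
$$[P(\partial_x)F(x,z)]_{x=0} \;=\; [\partial_E G(x,z)]_{x=0} \;=\; \frac{d}{dz}\,[G(x,z)]_{x=0}.$$
Since $\Res_{z=0}\,\frac{d}{dz}f(z)=0$ for any Laurent series $f$, the residues defining both ${\rm Pol}(P,\Psi,E)(a)$ and ${\rm Par}(P,\Psi,E)(a)$ vanish.

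The only point that needs care is the justification that $\partial_z$ can be pulled outside the evaluation at $x=0$ before taking the residue. This is not a serious obstacle: for $z$ in a small punctured disk around $0$ the kernel $F(x,z)$ is smooth in $x$ near $0$, so its Laurent expansion in $z$ has coefficients smooth in $x$ and all the operations commute. The entire argument thus reduces to the elementary identity $\partial_E = \partial_z$ acting on functions of $x+zE$, combined with the vanishing of the residue of a total $z$-derivative.
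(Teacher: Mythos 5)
Your proof is correct and follows essentially the same route as the paper's: reduce to the case $P|_W=0$, factor $P=\langle\cdot,E\rangle Q$ so that $P(\partial_x)=\partial_E Q(\partial_x)$, use that the kernel depends on $(x,z)$ only through $x+zE$ to convert $\partial_E$ into $\frac{d}{dz}$, and conclude since the residue of a total $z$-derivative vanishes. The only difference is your added remark justifying the interchange of $\partial_z$ with evaluation at $x=0$, which the paper leaves implicit.
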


\begin{proof}
If $p=0$, then $P=EQ$ where $Q$ is a  polynomial function on $V$.
Then
\[\begin{array}{ll}
P(\partial_x) F(x+zE)&=\frac{d}{d\epsilon}Q(\partial_x) F(\epsilon E+x+z E)_{\epsilon=0}\\
&=\frac{d}{d\epsilon}Q(\partial_x) F(x+(z+\epsilon) E)_{\epsilon=0}\\
&=\frac{d}{dz} (Q(\partial_x) F(x+z E))\\
\end{array}\]
so that the residue $\Res_{z=0}$ vanishes on the function
$z\mapsto P(\partial_x) F(x+zE)_{x=0}.$ \end{proof}\bigskip

We can then give the following definitions.
\begin{definition}
Let $p$ be a polynomial function on $W$.
We define $${\rm Pol}(p,\Psi,E):={\rm Pol}(P,\Psi,E)$$
where $P$ is any polynomial on $V$ extending $p$.

We define $${\rm Par}(p,\Psi,E):={\rm Par}(P,\Psi,E)$$
where $P$ is any polynomial on $V$ extending $p$.
 \end{definition}

In the following, given  polynomials $p,q,\ldots$ on $W$, we denote
by $P,Q,\ldots$ polynomials on $V$ extending $p,q,\ldots$.

\subsection{Some properties}
Let us list some properties satisfied by the function ${\rm
Pol}(p,\Psi,E)$.

We first remark that if we replace $\psi$ in $\Psi$ by $c\psi$ with
$c\neq 0$,  then ${\rm Pol}(p,\Psi,E)$ becomes $\frac{1}{c}{\rm
Pol}(p,\Psi,E)$.

We now discuss how ${\rm Pol}(p,\Psi,E)$ transforms under the action
of differentiation.
\begin{proposition}\label{diffPv}
Let $\psi\in \Psi$. Then
$$\partial(\psi) {\rm Pol}(p, \Psi,E)={\rm Pol}(p, \Psi\setminus \{\psi\},E).$$

Let $w\in W$. Then
$$\partial(w) {\rm Pol}(p,\Psi,E)={\rm Pol}(\partial(w)p,\Psi,E).$$

\end{proposition}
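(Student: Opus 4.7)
The plan is to prove both identities by passing the differential operator $\partial(\psi)$, respectively $\partial(w)$, under the residue symbol. These differentiations act on the variable $a\in V$, while $P(\partial_x)$ is a differential operator in the independent variable $x\in U$, the residue is in $z$, and the final evaluation is at $x=0$. Consequently $\partial(\psi)$ and $\partial(w)$ commute with each of these three operations, and it suffices to compute their action on the integrand $e^{\langle a,x+zE\rangle}/\prod_{\psi'\in\Psi}\langle \psi',x+zE\rangle$.

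For the first identity, $\partial(\psi)e^{\langle a,x+zE\rangle}=\langle \psi,x+zE\rangle\,e^{\langle a,x+zE\rangle}$, and the linear factor cancels exactly the corresponding factor of the denominator. The resulting expression is, by Definition~\ref{defPol}, the formula for ${\rm Pol}(P,\Psi\setminus\{\psi\},E)$, and Lemma~\ref{lem:Pp} identifies this with ${\rm Pol}(p,\Psi\setminus\{\psi\},E)$.

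For the second identity, the hypothesis $w\in W$ means $\langle w,E\rangle=0$, so $\partial(w)e^{\langle a,x+zE\rangle}=\langle w,x\rangle\,e^{\langle a,x+zE\rangle}$ with no $z$-dependence in the new prefactor, and in particular no cancellation with denominator factors. The key step is to absorb this multiplication by the linear form $\langle w,x\rangle$ into the differential operator $P(\partial_x)$ at $x=0$, via the Fourier-type identity
$$[P(\partial_x)(\langle w,x\rangle f(x))]_{x=0}=[(\partial(w)P)(\partial_x)f(x)]_{x=0},$$
valid for any polynomial $P$ on $V$ and any smooth function $f$ of $x$. This is a one-line Leibniz computation: $\partial^\alpha(\langle w,x\rangle f)=\langle w,x\rangle\partial^\alpha f+\sum_i \alpha_i w_i\,\partial^{\alpha-e_i}f$, and the first term vanishes at $x=0$; summing with the coefficients of $P$ reproduces $(\partial(w)P)(\partial_x)f$ at $x=0$. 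Applying this identity with $f$ the remaining factor $e^{\langle a,x+zE\rangle}/\prod_{\psi'\in\Psi}\langle \psi',x+zE\rangle$ and then taking $\Res_{z=0}$ gives ${\rm Pol}(\partial(w)P,\Psi,E)$. Since $w\in W$, the polynomial $\partial(w)P$ restricts on $W$ to $\partial(w)p$, so Lemma~\ref{lem:Pp} yields the desired ${\rm Pol}(\partial(w)p,\Psi,E)$.

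The only nonroutine step is this Leibniz identity moving $\langle w,x\rangle$ across $P(\partial_x)$ at $x=0$; all other manipulations are commutations of operators acting on independent variables $a$, $x$, and $z$.
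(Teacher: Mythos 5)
Your proof is correct and takes essentially the same route as the paper: the first identity by cancelling the factor $\langle\psi,x+zE\rangle$ produced when $\partial(\psi)$ hits the exponential, and the second by the key identity $\bigl(P(\partial_x)\langle w,x\rangle J(x)\bigr)_{x=0}=\bigl((\partial(w)P)(\partial_x)J(x)\bigr)_{x=0}$ together with $\langle w,E\rangle=0$. This key identity is exactly the paper's Lemma~\ref{Pdiff}; the paper derives it from the commutator relation $P(\partial_x)\langle x,w\rangle-\langle x,w\rangle P(\partial_x)=(\partial(w)P)(\partial_x)$ while you verify it by a multi-index Leibniz computation, which is the same fact.
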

\begin{proof}
The first formula follows immediately  from the definition.

For the second part of the proposition, we will use the following
lemma, which is implied by the relation
$P(\partial_x)\langle x,w\rangle -\langle x,w\rangle P(\partial_x)=(\partial(w)P)(\partial_x)$.

\begin{lemma}\label{Pdiff}
For any function $J(x)$ of $x\in U$,
$$\Big(P(\partial_x)\langle x,w\rangle  J(x)\Big)_{x=0}=\Big((\partial(w)P)(\partial_x) J(x)\Big)_{x=0}.$$
\end{lemma}
\bigskip

Now, as $\langle w,E\rangle =0$, for $J(x,z)=\frac{1}{\prod_{\psi\in
\Psi}\langle \psi,x+zE\rangle }$, we have
\[\begin{array}{ll}
\partial(w) \Res_{z=0}\left(P(\partial_x)
e^{\langle a,x+z E\rangle} J(x,z)\right)_{x=0}&=
\Res_{z=0}\left(P(\partial_x)\langle w,x\rangle  e^{\langle a,x+z E\rangle}
J(x,z)\right)_{x=0}\\
&=\Res_{z=0}\left((\partial(w)P)(\partial_x) e^{\langle a,x+z
E\rangle} J(x,z)\right) _{x=0}. \end{array}\] So we obtain the
formula of the proposition.
\end{proof}\bigskip

\begin{lemma}\label{restrictionV}
\begin{itemize}
\item If $\Psi=\{\psi\}$, then for $w\in W$ and $t\in \R$,
 $$ {\rm Pol}(p, \{\psi\},E)(w+t\psi)={\rm Par}(p,  \{\psi\},E)(w+t\psi)=\frac{p(w)}{\ll \psi, E\rr}.$$
 \item If
$|\Psi|>1$, then the restriction of ${\rm Pol}(p,\Psi,E)$ to
$W$ vanishes of order $|\Psi|-1$.
\end{itemize}
\end{lemma}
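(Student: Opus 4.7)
The plan is to prove each part by an explicit residue computation, exploiting the freedom, granted by Lemma \ref{lem:Pp}, to choose the extension $P$ of $p$ conveniently.

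For the first assertion, I would use $\psi \notin W$ to write $V = W\oplus \R\psi$ and take $P$ to be the unique extension of $p$ that is constant along $\psi$. In dual coordinates $x_1,\dots,x_{r-1},x_r$ on $U$ in which $\langle\psi,x\rangle = x_r$, the operator $P(\partial_x)$ then involves only $\partial_{x_1},\dots,\partial_{x_{r-1}}$ and hence commutes with any function of $x_r$ alone, in particular with $\langle\psi,x+zE\rangle^{-1} = (x_r+z\mu)^{-1}$ and with $(1-e^{-(x_r+z\mu)})^{-1}$, where $\mu := \langle\psi,E\rangle$. Applying $P(\partial_x)$ to $e^{\langle a,x+zE\rangle}$ yields $p(w)\,e^{\langle a,x+zE\rangle}$, since $P(a) = p(w)$ when $a = w+t\psi$. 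Setting $x=0$ and using $\langle a,zE\rangle = zt\mu$, the two expressions reduce to $p(w)e^{zt\mu}/(z\mu)$ and $p(w)e^{zt\mu}/(1-e^{-z\mu})$; both have residue $p(w)/\mu = p(w)/\langle\psi,E\rangle$ at $z=0$, since $1-e^{-z\mu} = z\mu + O(z^2)$.

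For the second assertion, the plan is to show that ${\rm Pol}(p,\Psi,E)(a)$ is divisible, as a polynomial on $V$, by $T(a)^{|\Psi|-1}$, where $T(a) := \langle a,E\rangle$ is the linear form cutting out $W$. I split the exponential as $e^{\langle a,x+zE\rangle} = e^{\langle a,x\rangle}\,e^{zT(a)}$ and invoke the exponential-shift identity
\[
P(\partial_x)\bigl(e^{\langle a,x\rangle} g(x,z)\bigr) = e^{\langle a,x\rangle}\,P(\partial_x+a)\bigl(g(x,z)\bigr),
\]
applied to $g(x,z) := \prod_{\psi\in\Psi}\langle\psi,x+zE\rangle^{-1}$. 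The crucial observation is that $g$ is jointly homogeneous of degree $-|\Psi|$ in $(x,z)$, so $\partial_x^\alpha g(x,z)\big|_{x=0}$ is a function of $z$ alone, homogeneous of degree $-|\Psi|-|\alpha|$, hence equals a constant multiple of $z^{-|\Psi|-|\alpha|}$. Expanding $P(\partial_x+a) = \sum_\alpha \tfrac{1}{\alpha!}(\partial^\alpha P)(a)\,\partial_x^\alpha$ by Taylor's formula and evaluating at $x=0$, the integrand becomes $e^{zT(a)}\sum_\alpha c_\alpha\,(\partial^\alpha P)(a)\,z^{-|\Psi|-|\alpha|}$ for constants $c_\alpha$ depending only on $\Psi$ and $E$. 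Reading off the $z^{-1}$-coefficient yields a sum of terms proportional to $T(a)^{|\Psi|+|\alpha|-1}(\partial^\alpha P)(a)$; since $|\alpha|\ge 0$, every such term is divisible by $T(a)^{|\Psi|-1}$, which is exactly the claimed vanishing of order $|\Psi|-1$ on $W$.

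The main subtlety I anticipate is keeping track of the order of operations---differentiation in $x$, substitution $x=0$, and residue in $z$---in the presence of the singularity of $g$ at the origin. I expect to handle this cleanly by working throughout with Laurent series in $z$ whose coefficients are rational functions of $x$: in that framework the joint homogeneity argument above is unambiguous, and $P(\partial_x)$, being differential in $x$ alone, commutes with $\Res_{z=0}$.
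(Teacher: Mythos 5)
Your proof is correct. For the first item your argument coincides with the paper's: both choose the extension $P$ of $p$ that is constant along $\psi$ (equivalently $P\in S(U_0)$ with $U_0=\{x\mid\langle\psi,x\rangle=0\}$), so that $P(\partial_x)$ commutes with the denominator, which depends only on $\langle\psi,x\rangle$, and the residue is then that of a simple pole, giving $p(w)/\langle\psi,E\rangle$ in both the ${\rm Pol}$ and ${\rm Par}$ cases. For the second item the paper offers no argument (it only says ``the other points are also easy to prove''), and your exponential-shift plus homogeneity argument is a correct and complete way to fill that in: the joint homogeneity of $g(x,z)=\prod_{\psi\in\Psi}\langle\psi,x+zE\rangle^{-1}$ of degree $-|\Psi|$ forces $(\partial_x^{\alpha}g)(0,z)=c_{\alpha}z^{-|\Psi|-|\alpha|}$ (well defined for $z\neq0$ precisely because $\langle\psi,E\rangle\neq0$ for every $\psi\in\Psi$), and pairing this against the expansion of $e^{z\langle a,E\rangle}$ shows that each contribution to the $z^{-1}$ coefficient carries a factor $\langle a,E\rangle^{|\Psi|+|\alpha|-1}$, hence ${\rm Pol}(p,\Psi,E)$ is divisible by $\langle a,E\rangle^{|\Psi|-1}$. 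This in fact gives a closed formula for ${\rm Pol}(p,\Psi,E)$ and proves slightly more than the stated vanishing; your care about the order of operations (differentiate in $x$, set $x=0$, then take $\Res_{z=0}$) matches the definition, so there is no gap.
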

\begin{proof}
Let $U_0=\{x| \ll \psi,x\rr =0\}$. We write $ U=U_0\oplus \R E$. The space $S(U_0)$ is isomorphic to the space of polynomial functions on $W$. We may choose $P$ in $S(U_0)$.
We write $x=x_0+x_1 E$, with $x_0\in U_0$.  In these coordinates
 $\ll\psi,x+zE\rr=(x_1+z) \ll \psi,E\rr$ is independent of $x_0$.
  So we can set  $x_1=0$ in the formula
   $$\Res_{z=0} \left(P(\partial_{x_0})\cdot
\frac{e^{\langle a,x+z E\rangle }}{(x_1+z) \ll\psi,E\rr}\right)_{x=0}$$
 and  the residue is computed for a function that have a simple pole at $z=0$.
The formula follows. The other points are also easy to prove.
\end{proof}\bigskip

Let us list some difference equations satisfied by the function
${\rm Par}(p,\Psi,E)$.

\begin{proposition}\label{diffpar}
Let $\psi\in \Psi$.
 Then
$$D(\psi) {\rm Par}(p, \Psi,E)={\rm Par}(p, \Psi\setminus \{\psi\},E).$$

Let $w\in  W$. Then
$$\tau(w) {\rm Par}(p,\Psi,E)={\rm Par}(\tau(w)p,\Psi,E).$$
\end{proposition}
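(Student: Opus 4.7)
The plan is to verify both identities by direct manipulation of the integrand in the residue formula defining ${\rm Par}(P,\Psi,E)$, in complete parallel with the arguments for ${\rm Pol}$ already carried out in Proposition~\ref{diffPv}. No subtle analytic input is required; both parts are algebraic rearrangements inside the residue.

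For the first identity, applying the difference operator $D(\psi)$ to the $a$-dependence of the integrand produces the factor
$e^{\langle a,x+zE\rangle}-e^{\langle a-\psi,x+zE\rangle}=e^{\langle a,x+zE\rangle}\bigl(1-e^{-\langle\psi,x+zE\rangle}\bigr),$
and the second factor cancels exactly the corresponding term in the denominator, reducing the product to one over $\Psi\setminus\{\psi\}$. By definition the result is ${\rm Par}(p,\Psi\setminus\{\psi\},E)(a)$, which is the first claim.

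For the second identity, the essential point is that $w\in W$ gives $\langle w,E\rangle=0$, so $\langle w,x+zE\rangle=\langle w,x\rangle$, independent of $z$; hence $e^{\langle a-w,x+zE\rangle}=e^{-\langle w,x\rangle}e^{\langle a,x+zE\rangle}$. The key algebraic step is the commutation identity
$P(\partial_x)\bigl(e^{-\langle w,x\rangle}g(x)\bigr)=e^{-\langle w,x\rangle}(\tau(w)P)(\partial_x)g(x),$
where $\tau(w)P$ denotes the polynomial $a\mapsto P(a-w)$; this follows by iterating the Leibniz rule $\partial_u(e^{-\langle w,x\rangle}g)=e^{-\langle w,x\rangle}(\partial_u-\langle u,w\rangle)g$, or equivalently by checking it on test exponentials $g(x)=e^{\langle b,x\rangle}$. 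Setting $x=0$ makes the prefactor $e^{-\langle w,x\rangle}$ equal to $1$, and what remains under the residue is exactly the defining integrand for ${\rm Par}(\tau(w)P,\Psi,E)(a)$. Since $w\in W$, the restriction of $\tau(w)P$ to $W$ coincides with $\tau(w)p$, so by Lemma~\ref{lem:Pp} we obtain ${\rm Par}(\tau(w)p,\Psi,E)(a)$, as desired. The only mild point requiring care is the commutation identity above, but once stated it is immediate from the Leibniz rule.
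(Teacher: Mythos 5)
Your proof is correct and follows essentially the same route as the paper: the first identity by cancelling the factor $1-e^{-\langle\psi,x+zE\rangle}$ produced by $D(\psi)$ against the denominator, and the second via the commutation relation $P(\partial_x)e^{-\langle w,x\rangle}=e^{-\langle w,x\rangle}(\tau(w)P)(\partial_x)$ together with $\langle w,E\rangle=0$, exactly as the paper does by reusing the argument of Proposition~\ref{diffPv}. You merely spell out details the paper leaves implicit; nothing is missing.
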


\begin{proof}
The first formula follows immediately from the  definition.

The translation operator $\tau(w)$  satisfies the relation
$$P(\partial_x)e^{-\ll w,x\rr}=e^{-\ll w,x\rr}(\tau(w)P)(\partial_x).$$
Thus, the  second formula follows from the same argument as in the
proof of the second item in Proposition \ref{diffPv}.
\end{proof}\bigskip

\section{Wall crossing formula for the volume}

In this  section, we give two formulae for the jump of the volume
function across a wall. The first one uses convolutions of Heaviside
distributions and is in the spirit of Paradan's formula (\cite{Par},
Theorem 5.2) for the jump of the partition function. The second one
is a one dimensional residue formula.

\subsection{Inversion formula}
We will need  some formulae for Laplace transforms in dimension $1$.
For $z>0$ and   $k\geq 0$ an integer, we have
\begin{equation}\label{laplacecontinuous}
\frac{1}{z^{k+1}}=\int_{0}^{\infty} \frac{t^{k}}{k!} e^{-tz} dt.
\end{equation}

Consider the Laplace transform
$$L(p)(z)=\int_{\R^+} e^{-tz}p(t)dt.$$
Assume that $p(t)=\sum_{ik}c_{ik}p_{i,k}(t)$   is a linear combination
of  the functions  $p_{i,k}(t)=e^{-t x_i}\frac{t^k}{k!}$. We assume  that  $x_i>0$.  Then, the integral defining $L(p)$ is convergent. We have
\begin{equation}\label{lap}
L(p)(z)=\sum_{i,k}\frac{c_{ik}}{(z+x_i)^{k+1}}.
\end{equation}

The following inversion formula is immediate to verify.
\begin{lemma} Let $R>0$. Assume that $|x_i|<R$ for all $i$.
Then we have
\begin{equation}\label{inversionexppol}
p(t)=\frac{1}{2i\pi}\int_{|z|=R} L(p)(z)e^{tz}dz.
\end{equation}
\end{lemma}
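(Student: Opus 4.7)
The plan is to reduce by linearity to the basis functions $p_{i,k}(t)=e^{-tx_i}\frac{t^k}{k!}$, and then apply the residue theorem to a single simple integrand. Since the sum defining $p(t)$ is finite, the integral commutes with the sum, so it suffices to prove
$$e^{-tx_i}\frac{t^k}{k!}=\frac{1}{2i\pi}\int_{|z|=R}\frac{e^{tz}}{(z+x_i)^{k+1}}\,dz$$
for each pair $(i,k)$.

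The integrand $\frac{e^{tz}}{(z+x_i)^{k+1}}$ is meromorphic on $\C$ with a single pole, of order $k+1$, at $z=-x_i$. The hypothesis $|x_i|<R$ guarantees that this pole lies strictly inside the contour $|z|=R$, so by the residue theorem the contour integral equals $\Res_{z=-x_i}\frac{e^{tz}}{(z+x_i)^{k+1}}$.

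To compute the residue, I would make the change of variable $w=z+x_i$, giving $e^{tz}=e^{-tx_i}e^{tw}$, so the residue at $w=0$ of $e^{-tx_i}\frac{e^{tw}}{w^{k+1}}$ is $e^{-tx_i}$ times the coefficient of $w^{k}$ in the Taylor expansion of $e^{tw}$, which is $t^{k}/k!$. This gives exactly $e^{-tx_i}\frac{t^{k}}{k!}$, proving the formula for the basis functions; summation against the $c_{ik}$ then yields the lemma in full, using the formula (\ref{lap}) for $L(p)$.

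There is no real obstacle here: the only subtlety is confirming that each pole $-x_i$ lies inside $|z|=R$, which is exactly the hypothesis $|x_i|<R$. No convergence issues arise since only finitely many terms are in play.
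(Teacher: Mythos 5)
Your proof is correct and matches the paper's intent: the paper states the lemma without proof, calling it ``immediate to verify,'' and the intended verification is exactly your computation --- reduce by linearity to the basis functions $p_{i,k}$ and evaluate the contour integral by the residue theorem at the pole $z=-x_i$, which lies inside $|z|=R$ by hypothesis. Nothing is missing.
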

Reciprocally, if $p$ is a continuous function on $\R$ such that $L(p)(z)$ is convergent and given by
Formula (\ref{lap}), then $p$ is given by Equation (\ref{inversionexppol}).

If $p(t)=\sum_k c_k\frac{t^k}{k!}$  is a polynomial (that is all the
elements $x_i$ are equal to $0$), then  $L(p)(z)$ is  the Laurent
polynomial $\sum_{k}c_k z^{-k-1}$, and the inversion formula above
reads
\begin{equation}\label{inversionpol}
p(t)=\Res_{z=0}L(p)(z) e^{tz}.
\end{equation}

\subsection{Convolution of measures}
Let $E\in U$ be a non zero linear form on $V$  and $W\subset V$
the corresponding hyperplane.
 Let $V^+=\{a \in V |
\langle a,E\rangle >0\}$ and $V^-=\{a \in V | \langle a,E\rangle <0\}$ denote the corresponding
open half spaces. Let $\Delta^+=[\alpha_1,\alpha_2,\ldots,
\alpha_Q]$ be a sequence of vectors contained in $V^+$.  Consider
the span $<\Delta^+>$ of $\Delta^+$. We choose a Lebesgue measure
$da$ on $<\Delta^+>$ with dual measure $dx$ on $<\Delta^+>^*$. We
define the continuous function $v(\Delta^+,dx)(a)$ on the cone
$C(\Delta^+) \subset <\Delta^+>$ such that, for $x\in C(\Delta^+)^*$, we have

\begin{equation}\label{defv}
\frac{1}{\prod_{\alpha\in \Delta^+}\langle \alpha,x\rangle }=\int_{C(\Delta^+)}
v(\Delta^+,dx)(a)e^{-\langle a,x\rangle }da.
\end{equation}

By Lemma~\ref{laplaceV}, $v(\Delta^+,dx)(a)=\vol(\Delta^+,dx)(a)$.

We choose the Lebesgue measure $dw=da/dt$ on $W\cap <\Delta^+>$ where $t=\langle a,E\rangle $.
The measure $dw$ determines a measure on all affine spaces $W\cap (a+ <\Delta^+>)$.

\begin{proposition}
Let $p$ be a polynomial function on $W$.  We define
$$(p*v(\Delta^+,dx))(a)=\int_{W\cap (a+<\Delta^+>)} p(w) v(\Delta^+,dx)(a-w) dw.$$
Then, for $a\in V^+$, we have
$$(p*v(\Delta^+,dx))(a) = {\rm Pol}(p,\Delta^+,E)(a) \,\, {\rm for} \, \, a\in V^+.$$
\end{proposition}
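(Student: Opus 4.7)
The plan is to identify both sides of the formula, for $a \in V^+$, with the same one-variable polynomial evaluated at $t := \langle a, E\rangle > 0$, using the Laplace representation~(\ref{defv}) of $v(\Delta^+, dx)$ together with the one-dimensional inversion formula~(\ref{inversionpol}).

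First, I would rewrite the convolution. Substituting $u = a - w$, the constraints $w \in W$ and $w \in a + \langle\Delta^+\rangle$ become $\langle u, E\rangle = t$ and $u \in \langle\Delta^+\rangle$; moreover $p(a-u) = P(a-u)$ on $W$, where $P$ is any polynomial extension. Thus $(p * v(\Delta^+, dx))(a) = f(t)$, where for $s \geq 0$
\begin{equation*}
f(s) := \int_{u \in \langle\Delta^+\rangle,\ \langle u, E\rangle = s} P(a-u)\, v(\Delta^+, dx)(u)\, du_s,
\end{equation*}
and $du_s$ is the translate of $dw$ to the parallel slice at height $s$. Rescaling $u = s\bar u$ and using homogeneity of $v(\Delta^+, dx)$ of degree $d = Q - \dim\langle\Delta^+\rangle$ together with the factor $s^{\dim\langle\Delta^+\rangle - 1}$ for the scaled $(\dim\langle\Delta^+\rangle - 1)$-dimensional slice measure, one sees $f(s) = s^{Q-1} R(s)$ for some polynomial $R$, so $f$ is polynomial in $s$.

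Second, I would rewrite the right-hand side. For real $z$ large and $x$ in a small neighborhood of $0$, $x + zE$ lies in the interior of $C(\Delta^+)^*$ (since $\langle \alpha, E\rangle > 0$ for all $\alpha \in \Delta^+$), so~(\ref{defv}) gives
\begin{equation*}
\frac{e^{\langle a, x+zE\rangle}}{\prod_{\alpha \in \Delta^+}\langle\alpha, x+zE\rangle} = \int_{C(\Delta^+)} v(\Delta^+, dx)(u)\, e^{\langle a-u, x+zE\rangle}\, du.
\end{equation*}
Applying $P(\partial_x)$ under the integral (legitimate since $P$ has finite order and the integrand converges uniformly near $x = 0$), using $P(\partial_x) e^{\langle a-u, x+zE\rangle} = P(a-u) e^{\langle a-u, x+zE\rangle}$, setting $x = 0$, and decomposing the integral over slices of $C(\Delta^+)$ of constant $s = \langle u, E\rangle$, one obtains
\begin{equation*}
\left(P(\partial_x) \frac{e^{\langle a, x+zE\rangle}}{\prod_\alpha \langle\alpha, x+zE\rangle}\right)_{x=0} = e^{tz} \int_0^\infty f(s)\, e^{-sz}\, ds = e^{tz} L(f)(z)
\end{equation*}
for $z$ large.

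Finally, both sides are meromorphic on $\C \setminus \{0\}$ of the form $e^{tz}$ times a Laurent polynomial in $z$: on the left, this follows by inspecting $P(\partial_x)$ acting on the Taylor expansion at $x = 0$ (after setting $x=0$ the only singularities come from powers of $z$ in the denominator); on the right, $L(f)$ is a Laurent polynomial by~(\ref{lap}) with all $x_i = 0$, since $f$ is polynomial. Analytic continuation yields equality on $\C \setminus \{0\}$, and taking residues at $z = 0$ together with~(\ref{inversionpol}) gives
\begin{equation*}
\mathrm{Pol}(p, \Delta^+, E)(a) = \Res_{z=0} e^{tz} L(f)(z) = f(t) = (p * v(\Delta^+, dx))(a).
\end{equation*}
The main obstacle is the polynomiality of $f(s)$: it requires carefully combining the homogeneity of the volume function with the scaling of the slice measure. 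The remaining steps (interchange of $P(\partial_x)$ with the integral, analytic continuation from the Laplace-convergent region, and the one-dimensional inversion) are routine once the first step is in place.
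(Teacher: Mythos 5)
Your proof is correct, and it follows the same overall strategy as the paper's --- slice the convolution along the direction of $E$, compute a one-variable Laplace transform via Formula (\ref{defv}), and invert --- but the order of operations differs in a way that changes which technical ingredients are needed. The paper keeps $x$ as a parameter: it forms $g_x(t)=\int_W v(\Delta^+,dx)(tF-w)e^{-\langle tF-w,x\rangle}dw$, identifies $L(g_x)(z)=\prod_\alpha\langle\alpha,x+zE\rangle^{-1}$, and must then invoke partial fractions together with the contour-integral inversion formula (\ref{inversionexppol}) (and the injectivity of the Laplace transform on continuous functions), because $g_x(t)$ is an exponential polynomial rather than a polynomial; only afterwards does it apply $P(\partial_x)$, interchange it with the contour integral, and recognize the result as a residue at $z=0$. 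You instead apply $P(\partial_x)$ and set $x=0$ at the outset, so your one-variable function $f(s)$ is a genuine polynomial and the elementary residue inversion (\ref{inversionpol}) suffices; the price is that you must prove polynomiality of $f$ directly, which you do correctly via the positive homogeneity $\vol(\Delta^+,dx)(su)=s^{d}\vol(\Delta^+,dx)(u)$ combined with the $s^{\dim\langle\Delta^+\rangle-1}$ scaling of the slice measure (this also yields, as a byproduct, the polynomiality of $p*v(\Delta^+,dx)$ on $V^+$, which the paper only obtains a posteriori from the residue formula). Your matching of the two sides as $e^{tz}$ times Laurent polynomials agreeing for large real $z$ is sound, and you also bypass the paper's preliminary reduction to the case $\langle\Delta^+\rangle=V$ by integrating directly over the slices $W\cap(a+\langle\Delta^+\rangle)$.
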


Remark that $p*v(\Delta^+,dx)$  depends only on the
choice of $E$.  Indeed,  $p*v(\Delta^+,dx)$ is
the convolution of two functions, one of which depends on the
measure $dx$, while the convolution  depends on the measure $dw$.  We see
that finally this depends only of the choice of $E$.

We also remark that, for fixed $a\in V^+$, the integral defining
$p*v(\Delta^+,dx)$ is in fact over the compact set $W\cap
(a-C(\Delta^+))$ where $v(\Delta^+,dx)(a-w)$ is not equal to zero .

\begin{proof}
We decompose $W=W_0\oplus W_1$, where $W_0=W\cap <\Delta^+>$. Then,
we can write $a\in V$ as $a=tF+w_0+w_1$, with $\langle F,E\rangle
=1$. If $p(w_0+w_1)=p_0(w_0) p_1(w_1)$,  we see that
$(p*v(\Delta^+,dx))(tF+w_0+w_1)=p_1(w_1)
(p_0*v(\Delta^+,dx))(tF+w_0)$. Hence, it is sufficient to prove the
proposition in the case where $\Delta^+$ generates $V$. Then
$$(p*v(\Delta^+,dx))(a)=\int_W p(w)v(\Delta^+,dx)(a-w)dw.$$
The polynomial nature of $(p*v(\Delta^+,dx))(a)$ is clear intuitively.
In any case, we will prove the explicit formula of the proposition,
which gives a polynomial formula for $(p*v(\Delta^+,dx))(a)$.

We need to compute, for $a\in V^+$, $I(a):=\int_W p(w)
v(\Delta^+,dx)(a-w) dw$. This integral is over a compact subset of
$W$. Let  $P\in S(U)$ be  a polynomial function on $V$ extending $p$. We may
write
$$I(a)=\left( P(\partial_x)\cdot \int_{W}v(\Delta^+,dx)(a-w) e^{\langle w,x\rangle }dw \right)_{x=0}.$$
Define $$g_x(a)=\int_{W}v(\Delta^+,dx)(a-w) e^{-\langle a-w,x\rangle
}dw.$$ Then $g_x(a)$ depends analytically on the variable $x\in U$,
and we have
\begin{equation}\label{myI}
I(a)=\left( P(\partial_x)\cdot e^{\langle a,x\rangle
}g_x(a)\right)|_{x=0}.
\end{equation}
The function $a\mapsto g_x(a)=\int_{W}v(\Delta^+,dx)(a-w)
e^{-\langle a-w,x\rangle }dw$ is a continuous function of $a$ modulo
$W$, that is, it is a continuous function of  the variable
$t=\langle a,E\rangle \geq 0$ when $a\in V^+$. We then write
$g_x(t)=g_x(tF)=\int_{W}v(\Delta^+,dx)(tF-w) e^{-\langle
tF-w,x\rangle }dw.$

To identify the function $g_x(t)$, we compute its Laplace transform
in  one variable. Let $z>0$. If $x$ is in $C(\Delta^+)^*$, the
integral defining $L(g_x)$ is convergent  and we have
\[\begin{array}{ll}
L(g_x)(z)=\int_{t>0}e^{-tz}g_x(tF)dt&=\int_{V^+} e^{-\langle
a,zE\rangle}v(\Delta^+,dx)(a)e^{-\langle a,x\rangle }da\\
&=\frac{1}{\prod_{\alpha\in \Delta^+}\langle \alpha, x+zE\rangle }
\end{array}\]
by  Formula (\ref{defv}). Here $ \langle \alpha, x+zE\rangle
=d_\alpha z+\langle \alpha, x\rangle$ with $d_\alpha=\langle \alpha,
E\rangle>0$ and $\langle \alpha, x\rangle >0$.

Thus, by partial fraction decomposition, $L(g_x)(z)$ is a function
of the type given by Formula (\ref{lap}). By the inversion formula
for the Laplace transform in one variable, we obtain that (for $x$
small enough)
$$g_x(a)=\frac{1}{2i\pi} \int_{|z|=1}\frac{e^{\langle a,z E\rangle }}
{\prod_{\alpha\in \Delta^+} \langle \alpha,x+zE\rangle} dz.$$

Thus Formula (\ref{myI}) becomes
$$I(a)= P(\partial_x)\cdot\left(e^{\langle a,x\rangle}\frac{1}{2i\pi} \int_{|z|=1}
\frac{e^{\langle a,z E\rangle }}{\prod_{\alpha\in \Delta^+} \langle
\alpha, x+zE\rangle} dz\right)_{x=0}$$
$$=\frac{1}{2i\pi} \int_{|z|=1}\left( P(\partial_x)\cdot\frac{e^{\langle a,x+zE\rangle }}
{\prod_{\alpha\in \Delta^+} \langle \alpha, x+zE\rangle}
\right)_{x=0} dz.$$

The function in the integrand has a Laurent series at $z=0$ with polynomial coefficients in $a$
of  the form $\sum_{k}g_k(a)z^k$. Thus we obtain
$$I(a)=\Res_{z=0} \left(P(\partial_x)\cdot \frac{e^{\langle a,x+zE\rangle }}
{\prod_{\alpha\in \Delta^+} \langle \alpha,x+zE\rangle}
\right)_{x=0}.$$ This shows  that   $I(a)$ coincide with the  polynomial
function ${\rm Pol}(p,\Delta^+,E)$ on
$V^+$.
\end{proof}\bigskip\medskip

We also remark that, if $p$ is homogeneous, ${\rm
Pol}(p,\Delta^+,E)$ is homogenous in $a$ of degree
$|\Delta^+|-1+\rm{deg}(p)$.

\subsection{The jump for the volume  function}\label{sec:jumpvol}
Let  $\vol(\Phi,dx)$  be the locally polynomial function on the
cone $C(\Phi)$ generated by $\Phi$.  Let $W$ be a wall, determined
by a  vector $E \in U$.  Let $V^+$ and $V^-$ denote the
corresponding open half spaces. Define $\Phi_0=\Phi\cap W$; this
is a sequence of vectors in $W$ spanning $W$.

Let $\c_1\subset V^+$ and  $\c_2\subset V^-$  be two chambers  on two sides of $W$ and adjacent. Here, we mean that
$ \overline{\c_1}\cap \overline{\c_2}$ has non empty relative interior in $W$. Thus
$ \overline{\c_1}\cap \overline{\c_2}$ is contained in the closure of a chamber $\c_{12}$ of
$\Phi_0$. We choose the measure $dw$ on $W$ such that $da=dw dt$
with $t=\langle a,E\rangle $. We write $$\Phi=[\Phi_0,\Phi^+,\Phi^-]$$ where
$\Phi^+=\Phi\cap V^+$ and $\Phi^-=\Phi\cap V^-$.

Let $$R_+(\Phi)=[\phi|\phi\in \Phi^+]\cup [-\phi|\phi\in \Phi^-].$$ By
construction, the sequence $R_+(\Phi)$ is contained in $V^+$.

\begin{theorem}\label{jumpvol}
Let $v_{12}=v(\Phi_0,dw,\c_{12})$ be the polynomial function on $W$
associated to the chamber $\c_{12}$ of $\Phi_0$.  Then, if $\ll E,\c_1\rr>0$,
\begin{equation}\label{Leq}
v(\Phi,dx,\c_1)-v(\Phi,dx,\c_2)={\rm Pol}(v_{12},\Phi\setminus
\Phi_0,E).
\end{equation}
\end{theorem}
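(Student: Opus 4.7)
The plan is to induct on $|\Phi|$. Set $F := v(\Phi,dx,\c_1) - v(\Phi,dx,\c_2) - \mathrm{Pol}(v_{12},\Phi\setminus\Phi_0,E)$ and aim to show $F\equiv 0$. Each $v(\Phi,dx,\c_i)$ is homogeneous of degree $d := |\Phi|-r$ by Proposition \ref{qualitative}. On the other side, $v_{12}$ is homogeneous of degree $|\Phi_0|-(r-1)$, and the remark immediately following the convolution proposition says that $\mathrm{Pol}(p,\Psi,E)$ adds $|\Psi|-1$ to the degree of a homogeneous $p$, giving total degree $(|\Phi_0|-r+1)+(|\Phi\setminus\Phi_0|-1)=d$. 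Hence $F$ itself is homogeneous of degree $d$.

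The base case is $|\Phi|=r$: then $\Phi$ is a basis of $V$, $\Phi_0$ is a basis of $W$, and $\Phi\setminus\Phi_0=\{\phi\}$. Here one computes directly. Exactly one of the $v(\Phi,dx,\c_i)$ equals $|\det\Phi|^{-1}$ on the interior chamber of $C(\Phi)$ (which lies on the side of $W$ containing $\phi$) while the other vanishes; and Lemma \ref{restrictionV} evaluates $\mathrm{Pol}(v_{12},\{\phi\},E)$ as $v_{12}/\langle\phi,E\rangle$. The relation $da=dw\,dt$ with $t=\langle a,E\rangle$ forces $|\det\Phi|=|\det_W\Phi_0|\cdot|\langle\phi,E\rangle|$, matching the two sides with the correct sign according to whether $\phi$ lies in $V^+$ or $V^-$.

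In the inductive step $d>0$, and it suffices to show $\partial(\phi)F=0$ for every $\phi\in\Phi$: since $\Phi$ spans $V$, this forces $F$ to be constant, and a homogeneous polynomial of strictly positive degree that is constant must vanish. Fix $\phi\in\Phi$ and apply Lemma \ref{lem:diffvc} and Proposition \ref{diffPv} to the three terms of $F$. In the generic situation where $\Phi\setminus\{\phi\}$ spans $V$ and, if $\phi\in\Phi_0$, $\Phi_0\setminus\{\phi\}$ spans $W$, the wall $W$ and its chambers descend coherently to $\Phi':=\Phi\setminus\{\phi\}$; under these identifications $\partial(\phi)F$ is exactly the analogous expression $F'$ for $\Phi'$, which vanishes by the induction hypothesis. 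In the degenerate subcases the vanishing is immediate: if $\phi\in\Phi\setminus\Phi_0$ is the only element off $W$, then $\partial(\phi)v(\Phi,dx,\c_i)=0$ and $\partial(\phi)\mathrm{Pol}(v_{12},\{\phi\},E)=\mathrm{Pol}(v_{12},\emptyset,E)=0$ (a direct residue computation); if $\phi\in\Phi_0$ is essential for $\Phi_0$ to span $W$, then $\partial(\phi)v_{12}=0$ kills the $\mathrm{Pol}$ term, and either $\Phi\setminus\{\phi\}$ also fails to span $V$ (killing both $\partial(\phi)v(\Phi,\c_i)$) or $W$ is no longer a wall of $\Phi\setminus\{\phi\}$, in which case $\c_1,\c_2$ merge into a single chamber of $\Phi\setminus\{\phi\}$ and the two volume derivatives coincide.

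The hard part is the bookkeeping of this case analysis: one must verify that when one reduces to $\Phi'=\Phi\setminus\{\phi\}$ the wall $W$, the two adjacent chambers $\c_1',\c_2'$, and the new chamber $\c_{12}'$ of $\Phi_0'$ all interlock so that $\partial(\phi)F=F'$ in the generic subcase, and that each of the degenerate subcases really does force $\partial(\phi)F=0$ term by term.
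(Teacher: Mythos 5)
Your proposal is correct, but it concludes differently from the paper. The paper differentiates only in directions $\phi\in\Phi\setminus\Phi_0$: removing such a $\phi$ keeps $\Phi_0$, $W$ and $v_{12}$ unchanged, so a single application of Lemma \ref{lem:diffvc} and Proposition \ref{diffPv} shows the discrepancy is constant along $\R\phi$, and it is then killed by a boundary condition on $W$ itself --- the left side vanishes on $W$ because $\vol(\Phi,dx)$ is continuous on $V$ when $|\Phi|>\dim V$ (Corollary \ref{co:continuous}), and the right side vanishes on $W$ by the second item of Lemma \ref{restrictionV}; accordingly the paper's base case is ``only one vector of $\Phi$ off $W$'' rather than $|\Phi|=r$. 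You instead differentiate in \emph{every} direction $\phi\in\Phi$ and replace the boundary condition by homogeneity of positive degree. This buys you independence from the continuity/restriction-to-$W$ lemmas, but costs you the three-way case analysis for $\phi\in\Phi_0$ (does $\Phi_0\setminus\{\phi\}$ still span $W$, does $W$ survive as a wall of $\Phi'$, do $\c_1,\c_2$ merge); that analysis is exactly the one the paper is forced to carry out for the discrete analogue (Theorem \ref{jumpkuni}), where no homogeneity is available, so your argument is essentially the discrete proof transposed to the volume setting. All the individual steps you invoke (homogeneity of ${\rm Pol}$ of degree $|\Psi|-1+\deg p$, the determinant identity $|\det\Phi|=|\det_W\Phi_0|\cdot|\langle\phi,E\rangle|$ in the base case, the merging of $\c_1,\c_2$ when $W$ ceases to be a wall of $\Phi'$) are sound, so the only caveat is the one you flag yourself: the interlocking of $\c_1',\c_2',\c_{12}'$ in the generic subcase must be checked, as the paper does in the second bullet of the proof of Theorem \ref{jumpkuni}.
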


\begin{remark}
We have $${\rm Pol}(v_{12},\Phi\setminus \Phi_0,E)=(-1)^{|\Phi^-|}
{\rm Pol}(v_{12},R_+(\Phi),E).$$ Thus, by results of the preceding
section, the difference of the volume functions
$v(\Phi,dx,\c_1)-v(\Phi,dx,\c_2)$ coincides on $V^+$, up to sign,
with the convolution of the {\bf polynomial measure} $v_{12}(w)dw$
associated to the chamber $\c_{12}$ and with  the Heaviside
distributions associated to the  vectors $ \psi\in R_+(\Phi)$. This
is in the line of Paradan's description of the jump formula for
partition functions (\cite{Par}, Theorem 5.2) .
\end{remark}

\begin{proof}
Denote by ${\rm Leq}(\Phi)$ the left hand side and by ${\rm
Req}(\Phi)$  the right hand side of Equation (\ref{Leq}) above.

We will first verify the claim in the theorem when  there is only
one vector $\phi$ of $\Phi$ that does not lie in $W$. We can suppose
that $\Phi^+=\{\phi\}$ and that $\ll E,\phi\rr=1$. Then the chamber
$\c_1$ is equal to $\c_{12}\times \R_{>0} \phi$, while $\c_2$ is the
exterior chamber. In this case, $v(\Phi,dx,\c_1)(w+t
\phi)=v(\Phi_0,(dw)^*,\c_{12})(w)=v_{12}(w)$, while
$v(\Phi,dx,\c_2)=0$. The equation (\ref{Leq}) follows from the first
item of Lemma \ref{restrictionV}.

If not, then $|\Phi|>\dim (V)$  and the function $\vol(\Phi,dx)$ is
continuous on $V$ by Corollary \ref{co:continuous}. Let $\phi$ be a
vector in $\Phi$ that does not lie in $W$. We may assume that
$\phi\in V^+$. Then the sequence $\Phi'=\Phi\setminus \{\phi\}$ will
still span $V$. The intersection of $\Phi'$ with $W$ is $\Phi_0$.
If $\c'_1$ and $\c'_2$ are the chambers of $\Phi'$ containing $\c_1$
and $\c_2$ respectively, they are adjacent with respect to $W$.  As
$\Phi'\cap W=\Phi_0$,  the polynomial $v'_{12}$ attached to
$\c_{12}$ and $\Phi'\cap W$ is equal to $v_{12}$. By Lemma
\ref{lem:diffvc}, we have
$$\partial(\phi) (v(\Phi,\c_1)-v(\Phi,\c_2))=v(\Phi',\c'_1)-v(\Phi',\c'_2).$$

By Proposition \ref{diffPv},
$$\partial(\phi) {\rm Pol}(v_{12}, \Phi\setminus\Phi_0,E)={\rm Pol}(v_{12},\Phi'\setminus\Phi_0,E).$$
By induction, we obtain $\partial(\phi) ({\rm Leq}(\Phi)-{\rm
Req}(\Phi))=0$. Thus the polynomial function  ${\rm Leq}(\Phi)-{\rm
Req}(\Phi)$ is constant in the direction $\R\phi$. The left hand
side vanishes on $W$, as the function $\vol(\Phi)$ is continuous on
$V$. The right hand side also vanishes by Lemma \ref{restrictionV}.
This establishes the theorem.
\end{proof}\bigskip\bigskip
%
%\begin{figure}
%\begin{center}
%  \includegraphics[width=40mm]{B2.eps}\\
%  \caption{Chambers of $B_2$}\label{b2}
%\end{center}
%\end{figure}

\begin{figure}[!h]
\begin{center}\includegraphics{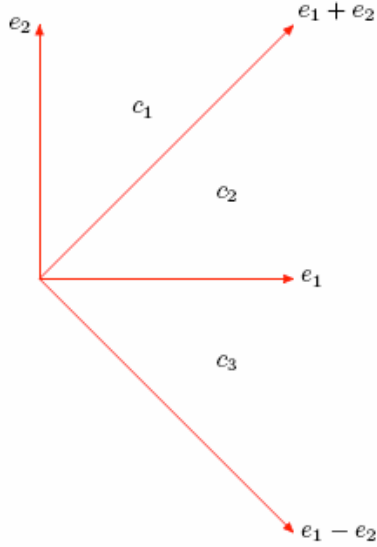}
\end{center}\caption{Chambers of $B_2$}\label{b2}
\end{figure}

Consider a vector space $V$ with basis $\{e_i,\; i:1,\ldots,r\}$; we
denote its dual basis by $\{e^i\}$.  The set
\[\Phi(B_r)=\{e_i, \; 1\leq i \leq r\} \cup \{e_i+e_j, \; 1\leq i<j \leq
r\}\cup \{e_i-e_j, \; 1\leq i<j \leq r\}\] is the set of positive
roots for the system of type $B_r$ and generates $V$.  We will
denote a vector $a\in V$ by $a=\sum_{i=1}^r a_ie_i$; it lies in
$C(\Phi(B_r))$ if and only if $a_1+\cdots+a_i\geq 0$ for all
$i:1,\ldots,r$.   This will be our notation for this root system in
subsequent examples.

\begin{example}\label{b2vol} We consider the root system of type $B_2$ (see
Figure~\ref{b2}) with $\Phi=\{e_1,e_2,e_1-e_2,e_1+e_2\}$. We will
calculate $v(\Phi,\c)$ for all the chambers using our formula in
Theorem \ref{jumpvol} iteratively starting from the exterior
chamber.

(i) Jump from the exterior chamber to $\c_1$ ($W=\R e_2$): In this
case $E=e^1$, $\Phi_0=\{e_2\}$, $\Phi^+=\{e_1+e_2,e_1,e_1-e_2\}$ and
$\Phi^-=\emptyset$.
\[\begin{array}{ll}
v(\Phi,\c_1)(a)-v(\Phi,\c_{\rm ext})(a)&={\rm Pol}(1,\Phi\setminus
\Phi_0,E)(a)\\
&=\Res_{z=0} \left(\frac{e^{\ll a,x+ze^1\rr }}{\prod_{\phi\in \Phi^+
\cup \Phi^-}\langle \phi,x+ze^1\rangle
}\right)_{x=0}\\
&=\Res_{z=0} \left(\frac{e^{a_1(x_1+z)+a_2x_2
}}{(x_1+z+x_2)(x_1+z)(x_1+z-x_2)}\right)_{x=0}\\
v(\Phi,\c_1)(a)&=\Res_{z=0}
\left(\frac{e^{a_1z}}{z^3}\right)=\frac{1}{2}a_1^2.
\end{array}\]

(ii) Jump from $\c_1$ to $\c_2$ ($W=\R (e_1+e_2)$): We have
$E=e^1-e^2$, $\Phi_0=\{e_1+e_2\}$, $\Phi^+=\{e_1,e_1-e_2\}$ and
$\Phi^-=\{e_2\}$.
\[\begin{array}{ll}
v(\Phi,\c_2)(a)-v(\Phi,\c_1)(a)&={\rm Pol}(1,\Phi\setminus
\Phi_0,E)(a)\\ &=\Res_{z=0} \left(\frac{e^{a_1(x_1+z)+a_2(x_2-z)
}}{(x_1+z)(x_1-x_2+2z)(x_2-z)}\right)_{x=0}\\
&=-\Res_{z=0}\left(\frac{e^{(a_1-a_2)z}}{2z^3}\right)=-\frac{1}{4}(a_1-a_2)^2.
\end{array}\]
Using (i),
$v(\Phi,\c_2)(a)=\frac{1}{2}a_1^2-\frac{1}{4}(a_1-a_2)^2=\frac{1}{4}(a_1+a_2)^2-\frac{1}{2}a_2^2$.

(iii) Jump from $\c_2$ to $\c_3$ ($W=\R e_1$): We have $E=e^2$,
$\Phi_0=\{e_1\}$, $\Phi^+=\{e_2,e_1+e_2\}$ and $\Phi^-=\{e_1-e_2\}$.
\[\begin{array}{ll}
v(\Phi,\c_2)(a)-v(\Phi,\c_3)(a)&={\rm Pol}(1,\Phi\setminus
\Phi_0,E)(a)\\
&=\Res_{z=0} \left(\frac{e^{a_1x_1+a_2(x_2+z)
}}{(x_2+z)(x_1+x_2+z)(x_1-x_2-z)}\right)_{x=0}=-\frac{1}{2}a_2^2.
\end{array}\]
Using (ii),
$v(\Phi,\c_3)(a)=\frac{1}{2}a_1^2-\frac{1}{4}(a_1-a_2)^2+\frac{1}{2}a_2^2=\frac{1}{4}(a_1+a_2)^2$.
\end{example}

\section{Wall crossing formula  for the partition function: unimodular case}

In this section, we compute the jump $k(\Phi,\c_1)-k(\Phi,\c_2)$ of
the partition function $k(\Phi)$ across a wall. In order to outline
the main ideas in the proof, we will first consider the case where
$\Phi$ is unimodular (see Remark~\ref{uni} for the definition). We
give two formulae. The first one is the convolution formula of
Paradan (\cite{Par}, Theorem 5.2). The second one is a one
dimensional residue formula.

\subsection{Discrete convolution}\label{sec:convdic}
Let $E\in U$ be a primitive element with respect to $\Gamma^*$ so
that $\langle E,\Gamma\rangle  = \Z$. Let $\Psi$ be a sequence of
vectors in $\Gamma$ such that $\langle \psi,E\rangle \neq 0$ for
all $\psi\in \Psi$. Thus $\Psi=\Psi^+\cup \Psi^-$ with
$\Psi^+=\Psi\cap V^+$ and $\Psi^-=\Psi\cap V^-$. Define
$$R_+(\Psi)=[\psi \|\psi \in \Psi^+]\cup [-\psi|\psi\in
\Psi^-].$$ Let  $$W:=\{a\in V\,|\, \langle a,E\rangle =0\},$$
 $$\Gamma_0=\Gamma\cap W.$$
We choose $F\in \Gamma$ such that $\langle E,F\rangle =1$. We thus have
$\Gamma=\Gamma_0\oplus \Z F$.

Let $\Gamma_{\geq 0}$  be the set of  elements $ a \in \Gamma$ such
that $\langle  a ,E\rangle \geq 0$.  Let us define the function $K^+(\Psi)$ on
$\Gamma_{\geq 0}$ such that we have, for $x\in C(R_+(\Psi))^*$,
\begin{equation}\label{eqF}
\prod_{\psi\in \Psi}\frac{1}{1-e^{-\langle \psi,x\rangle }}=\sum_{a \in
\Gamma_{\geq 0}}K^+(\Psi)(a) e^{-\langle a,x\rangle },
\end{equation}
that is  we have written

$$1/(1-e^{-\psi})=\sum_{n\geq 0}e^{-n\psi}, \,{\rm if}\, \psi \in \Psi^+$$
 and
$$1/(1-e^{-\psi})=-e^{\psi}/(1-e^{\psi})
=-\sum_{n> 0}e^{n\psi}\,{\rm if}\, \psi \in \Psi^-.$$

Let $\kappa_-=\sum_{\psi\in \Psi^-}\psi$ so that $\langle \kappa_-,E\rangle =
\sum_{\psi\in \Psi^-}\langle \psi,E\rangle $ is a strictly negative number if
and only $\Psi^-$ is non empty. Then we have  $$K^+(\Psi)(a)=(-1)^{|\Psi_-|}
k(R_+(\Psi))(a-\kappa_-),$$ where $k(R_+(\Psi))$  is the partition function of the system $R_+(\Psi)$.

The function $K^{+}(\Psi)$ is supported on the pointed cone
$-\kappa_-+C(R_+(\Psi))$. In particular the value $K^+(\Psi)(0)$ is
$1$ if $\Psi^-$ is empty, or $0$ if $\Psi^-$ is not empty.

\bigskip
Let $q$ be a   polynomial function on $\Gamma_0$.  Define for
$a\in \Gamma$
$$C(q,\Psi,E)(a):=\sum_{ w \in \Gamma_0}
q(w ) K^+(\Psi)(a- w).$$

The sum is over the finite set $\Gamma_0\cap (a-C(R_+(\Psi)))$.

\begin{theorem}\label{convol}
Assume $\Psi^+$ is non empty. Assume that, for any $\psi\in \Psi$,
 we have $\langle \psi,E\rangle =\pm 1$. Let $q$ be a  polynomial function on
$\Gamma_0$. Then, for $a\in \Gamma_{\geq 0}$,
$$C(q,\Psi,E)(a) ={\rm  Par}(q,\Psi,E)(a).$$
\end{theorem}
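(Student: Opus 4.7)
I would follow the proof of Theorem~\ref{jumpvol}, replacing the continuous Laplace transform by the discrete analogue~(\ref{eqF}). Let $Q$ be any polynomial on $V$ whose restriction to $W$ equals $q$. Since $Q(w)=(Q(\partial_x)e^{\langle w,x\rangle})_{x=0}$ for $w\in \Gamma_0$, pulling $Q(\partial_x)$ out of the sum gives
\[
C(q,\Psi,E)(a) \;=\; \Bigl(Q(\partial_x)\, e^{\langle a,x\rangle}\, \widetilde h_x(n)\Bigr)_{x=0},
\qquad n:=\langle a,E\rangle\ge 0,
\]
where $\widetilde h_x(n):=\sum_{b\in nF+\Gamma_0}K^+(\Psi)(b)\,e^{-\langle b,x\rangle}$ is a finite sum, since $K^+(\Psi)$ is supported in the pointed cone $-\kappa_-+C(R_+(\Psi))$ whose slice at height $n$ is finite.

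The heart of the argument is to express $\widetilde h_x(n)$ as a one-dimensional residue. Substituting $x+zE$ for $x$ in~(\ref{eqF}) and collecting lattice points by the value $\langle b,E\rangle=m$ gives, for $\operatorname{Re}(z)$ large,
\[
H_x(z)\;:=\;\prod_{\psi\in\Psi}\frac{1}{1-e^{-\langle\psi,x+zE\rangle}} \;=\; \sum_{m\ge 0}\widetilde h_x(m)\,e^{-mz}.
\]
Because $\langle\psi,E\rangle=\pm1$ for every $\psi\in\Psi$, the function $H_x(z)$ is $2\pi i$-periodic in $z$, and $e^{nz}$ is periodic as well (since $n\in\Z$). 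I would apply the residue theorem on a rectangle $\operatorname{Re}(z)=\pm R_0$, $\operatorname{Im}(z)\in[-\pi,\pi]$ for $x$ small: the top and bottom sides cancel by periodicity; on the left side the factors with $\psi\in\Psi^+$ force $H_x(z)e^{nz}\to 0$ as $\operatorname{Re}(z)\to-\infty$ (using $\Psi^+$ non-empty and $n\ge 0$); the right vertical side produces the standard Laurent-coefficient integral for the series above. The conclusion is
\[
\widetilde h_x(n) \;=\; \Res_{z=0}\bigl(e^{nz}\,H_x(z)\bigr),
\]
where the residue is the integral over a small circle enclosing all poles of $H_x$ that coalesce at $0$ as $x\to 0$.

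To conclude, I would multiply by $e^{\langle a,x\rangle}$ to rewrite $e^{\langle a,x\rangle}e^{nz}$ as $e^{\langle a,x+zE\rangle}$, then apply $Q(\partial_x)$ and interchange it with $\Res_{z=0}$, legitimate because on a fixed small contour around $0$ the integrand is jointly analytic in $x$ for $x$ near $0$. Evaluating at $x=0$ yields $C(q,\Psi,E)(a)={\rm Par}(Q,\Psi,E)(a)$, which equals ${\rm Par}(q,\Psi,E)(a)$ by Lemma~\ref{lem:Pp}. The main obstacle is the contour-closure step: all the preceding manipulations are formal, but the inversion genuinely needs both the $2\pi i$-periodicity ($\langle\psi,E\rangle=\pm1$) and the left-vertical decay ($\Psi^+$ non-empty and $n\ge 0$). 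A purely inductive argument via Proposition~\ref{diffpar} yields $D(\psi)(C-{\rm Par})=0$ for each $\psi\in\Psi$, but this falls short of ruling out nonzero translation-invariant quasi-polynomial remainders, so the residue computation seems unavoidable.
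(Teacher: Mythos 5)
Your proof is correct and follows essentially the same route as the paper: both pull $Q(\partial_x)$ out of the convolution, identify the generating function $\sum_m \widetilde h_x(m)e^{-mz}$ with $\prod_{\psi}(1-e^{-\langle\psi,x+zE\rangle})^{-1}$, and recover the coefficient by a contour shift that uses $\Psi^+\neq\emptyset$ and $n\geq 0$ for decay and $\langle\psi,E\rangle=\pm 1$ to ensure the only remaining pole sits at $z=0$. The paper performs this last step in the variable $u=e^{-z}$ (no pole at $u=\infty$, so the residue at $u=0$ equals minus the residue at $u=1$), which is just your rectangle-plus-periodicity argument after the change of variables.
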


\begin{proof}
We need to compute,  for $a\in \Gamma_{\geq 0}$,
$$S(a):=\sum_{ w \in \Gamma_0}q(w ) K^+(\Psi)(a-w).$$
This sum is over a finite set.

Let  $Q\in S(U)$ be  a polynomial function on $V$ extending $q$. We
may write
$$S(a)= \left( Q(\partial_x)\cdot \sum_{ w \in \Gamma_0}K^+(\Psi)(a-w ) e^{\langle  w ,x \rangle}\right)_{x=0}.$$
Define $$G_{x}(a)=\sum_{ w \in \Gamma_0}K^+(\Psi)(a-w)
 e^{-\langle a- w ,x \rangle }.$$
Then $G_{x}(a)$ depends in an analytic way of the variable $x\in U$, and we have

\begin{equation}\label{myS}
S(a)= \left(Q(\partial_x)\cdot e^{\langle a,x\rangle
}G_{x}(a)\right)|_{x=0}.
\end{equation}

The function  $a\mapsto G_{x}(a)=\sum_{ w \in
\Gamma_0}K^+(\Psi)(a-w)e^{-\langle a- w ,x \rangle }$  is a function
on $\Gamma/\Gamma_0=\Z F$. To identify the function $G_{x}(nF)$,  we
compute its discrete Laplace transform in  one variable. Let $x$ be
in $C(R_+(\Psi))^*$. Write $u=e^{-z}$. We compute

\[\begin{array}{ll}
L_{\rm dis}(G_x)(u)&=\sum_{n\geq 0} G_x(nF)u^n\\
&=\sum_{n\geq 0} G_x(nF)e^{-nz}\\
&=\sum_{n,w}K^+(\Psi)(nF-w)e^{-\ll nF-w,x\rr}e^{-\ll nF,zE\rr}\\
&=\sum_{a\in \Gamma}K^+(\Psi)(a)e^{-\ll a,x\rr}e^{-\ll a,zE\rr}\\
&=\frac{1}{\prod_{\psi\in \Psi} (1-e^{-\langle \psi,x+ zE \rangle
})}\\
&=\frac{1}{\prod_{\psi\in \Psi} (1-e^{-\langle
\psi,x\rangle }u^{\ll \psi, E\rr})}.\end{array}\]

In the fourth equality, we have written any element $a\in
\Gamma_{\geq 0}$ as $a=nF-w$, with $n\geq 0$ and $w\in \Gamma_0$.
The next equality is by the definition of the function
$K^+(\Psi)(a)$. Furthermore, we see that the sum is convergent
when $|u|<1$.

As  $L_{\rm dis}(G_x)(u)=\sum_{n\geq 0} G_x(nF)u^n$, Cauchy formula reads
\[\begin{array}{ll}
G_x(nF)&=\frac{1}{2i\pi} \int_{|u|=\epsilon} u^{-n}L_{\rm dis}(G_x)(u)\frac{du}{u}\\
&=\frac{1}{2i\pi} \int_{|u|=\epsilon}
\frac{u^{-n}} {\prod_{\psi\in \Psi}
(1-e^{-\langle \psi,x\rangle}u^{\ll\psi,E\rr})} \frac{du}{u}.
\end{array}\]

Thus we obtain  for $n=\ll a,E\rr$,
$$S(a)=\left(Q(\partial_x)\cdot e^{\langle a,x\rangle}
 \frac{1}{2i\pi} \int_{|u|=\epsilon}
\frac{u^{-n}} {\prod_{\psi\in \Psi}
(1-e^{-\langle \psi,x\rangle}u^{\ll\psi,E\rr})} \frac{du}{u}\right)_{x=0}$$
$$=\frac{1}{2i\pi} \int_{|u|=\epsilon}u^{-n}
\left( Q(\partial_x)\cdot\frac{e^{\langle a,x\rangle }}
 {\prod_{\psi\in \Psi}
(1-e^{-\langle \psi,x\rangle}u^{\ll \psi,E\rr})}\right)_{x=0} \frac{du}{u}.$$

As  at least  one of the $\ll \psi,E\rr$ is positive and $n\geq 0$,
it is easy to see that the function under the integrand has no pole
at $u=\infty$. As $\ll \psi,E\rr=\pm 1$, its poles are obtained for
$u=0$ and $u=1$. The integral on $|u|=\epsilon$ computes the residue
at $u=0$. We use the residue theorem so that $-S(a)$ can also be
computed  as the  residue for $u=1$. We use the coordinate
$u=e^{-z}$ near $u=1$, and we obtain
$$S(a)=\Res_{z=0} \left(Q(\partial_x)
\cdot \frac{e^{\langle a,x +zE\rangle}} {\prod_{\psi\in \Psi}
(1-e^{-\langle \psi,x+zE\rangle})}\right)_{x=0}$$ which establishes
the formula in the theorem.
\end{proof}\bigskip

Finally, we compute the restriction to $W\cap \Gamma$ of ${\rm
Par}(q,\Psi,E)$.

\begin{lemma}\label{restrictionuni}
\begin{itemize}
\item
If $|\Psi^-|=\emptyset$, then the restriction of ${\rm
Par}(q,\Psi,E)$ to $W$ is equal to $q$.
\item If $|\Psi^-|>0$, then the restriction of ${\rm Par}(q,\Psi,E)$
to $W$ vanishes.
\end{itemize}
\end{lemma}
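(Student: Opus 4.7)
The plan is to recycle the contour-integral identity set up in the proof of Theorem~\ref{convol}: for any $a\in \Gamma_{\geq 0}$,
$${\rm Par}(q,\Psi,E)(a) = \frac{1}{2\pi i}\oint_{|u|=\epsilon}u^{-n}\left(Q(\partial_x)\cdot \frac{e^{\langle a,x\rangle}}{\prod_{\psi\in\Psi}(1 - e^{-\langle\psi,x\rangle}u^{c_\psi})}\right)_{x=0}\frac{du}{u},$$
where $n := \langle a,E\rangle$ and $c_\psi := \langle\psi,E\rangle \in \{\pm 1\}$. Specializing to $a\in W\cap \Gamma$ kills the exponent ($n=0$), and since $|u|=\epsilon$ is small the contour integral equals the residue at $u=0$ of the integrand.

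The crux is a local analysis at $u=0$ of the function $H(x,u) := e^{\langle a,x\rangle}\prod_{\psi\in \Psi}(1 - e^{-\langle\psi,x\rangle}u^{c_\psi})^{-1}$. For $\psi\in \Psi^+$ the factor $(1 - e^{-\langle\psi,x\rangle}u)^{-1}$ is holomorphic with value $1$ at $u=0$. For $\psi\in \Psi^-$ one rewrites
$$\frac{1}{1 - e^{-\langle\psi,x\rangle}u^{-1}} = \frac{u}{u - e^{-\langle\psi,x\rangle}},$$
which is holomorphic at $u=0$ and vanishes to first order (the denominator is near $-1$ for $x$ near $0$). Multiplying these factors, $H(x,u)$ is holomorphic at $u=0$ with a zero of order exactly $|\Psi^-|$.

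If $\Psi^- = \emptyset$, the residue at $u=0$ of $(Q(\partial_x)H(x,u))_{x=0}/u$ equals the constant-in-$u$ term $(Q(\partial_x)e^{\langle a,x\rangle})_{x=0} = Q(a)$, and $Q(a) = q(a)$ since $a\in W$ and $Q$ extends $q$. If $|\Psi^-|\geq 1$, then $H(x,u) = O(u)$, so $(Q(\partial_x)H(x,u))_{x=0}/u$ is regular at $u=0$ and the residue is $0$. Because ${\rm Par}(q,\Psi,E)$ is polynomial in $a$ and $W\cap \Gamma$ is Zariski dense in $W$, the identities established on $W\cap \Gamma$ propagate to all of $W$.

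The only subtle point is justifying that $Q(\partial_x)$ commutes with extracting the constant term in $u$; this is immediate because $H(x,u)$ admits a convergent power series in $u$ about $u=0$ whose coefficients depend analytically on $x$ near $x=0$, so the finite-order differential operator $Q(\partial_x)$ acts coefficient by coefficient.
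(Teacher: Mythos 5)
Your proof is correct and is essentially the paper's: the paper specializes the convolution identity of Theorem \ref{convol} to $a\in W\cap\Gamma$ and reads off ${\rm Par}(q,\Psi,E)(w)=K^+(\Psi)(0)\,q(w)$ from the fact that $K^+(\Psi)$ is supported in the pointed cone $-\kappa_-+C(R_+(\Psi))$, which meets $W$ only when $\Psi^-=\emptyset$ and then only at the origin. Your residue computation at $u=0$ is precisely the Cauchy-integral form of that support statement (extracting the constant term in $u$ of the level-zero generating series), so the two arguments coincide.
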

\begin{proof}
The sum  formula gives ${\rm Par}(q,\Psi,E)(w)= K^+(\Psi)(0)q(w)$.
Recall that $K^+(\Psi)(0)$ vanishes as soon as $|\Psi^-|>0$; it is
equal to $1$ if $\Psi^-=\emptyset$.
\end{proof}\bigskip

\subsection{The jump for the partition function}\label{sec:jumpuni}
Let $\Phi$ be a sequence of vectors  spanning the lattice $\Gamma$.
In this section we assume that $\Phi$ is unimodular and that
$\Gamma=\Z\Phi$.

Let $k(\Phi)(a)$ be the partition function. Then $k(\Phi)(a)$
coincides with  a polynomial function on each  chamber. We consider,
as in Section  \ref{sec:jumpvol}, two adjacent chambers $\c_1$ and
$\c_2$ separated by a wall $W$.  As before, $\Phi_0$ denotes $W\cap
\Phi$; it is also a unimodular system for  the lattice $\Gamma\cap
W$. Let $k_{12}=k(\Phi_0,\c_{12})$ be the polynomial function on
$W\cap \Gamma$ associated to the chamber $\c_{12}$ of $\Phi_0$.
Consider the sequence $\Psi=\Phi\setminus \Phi_0$. We choose $E\in
U$ such that $\Psi^+$ is non empty.

As the system $\Phi$ is assumed to be unimodular, the integers
$d_\phi=\langle \phi,E\rangle $ are equal  to $\pm 1$ for any
$\phi\in \Phi$ not in $W$.

\begin{theorem}\label{jumpkuni}
Let $k_{12}=k(\Phi_0,\c_{12})$ be the  polynomial function on $W$
associated to the chamber $\c_{12}$. Then,  if $\ll E,\c_1\rr>0$, we
have
\begin{equation}\label{Leqdis}
k(\Phi,\c_1)-k(\Phi,\c_2)={\rm Par}(k_{12},\Phi\setminus
\Phi_0,E).
\end{equation}
\end{theorem}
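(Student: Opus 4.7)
The plan is to mirror the proof of Theorem \ref{jumpvol} with the differential operator $\partial(\phi)$ replaced by the difference operator $D(\phi)$ at every step, proceeding by induction on $|\Phi\setminus\Phi_0|$. Unimodularity enters only through Remark \ref{uni}, ensuring $k(\Phi,\c)$ is a genuine polynomial on $\Gamma$, so that a polynomial annihilated by $D(\phi)$ is constant along $\R\phi$. The main ingredients are Lemma \ref{lem:diffkc} (discrete analog of Lemma \ref{lem:diffvc}), the first part of Proposition \ref{diffpar} (which transports $D(\phi)$ through ${\rm Par}$), Lemma \ref{face}, and Lemma \ref{restrictionuni}.

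\textbf{Base case.} When $|\Phi\setminus\Phi_0|=1$, write $\Phi\setminus\Phi_0=\{\phi\}$ with $\ll\phi,E\rr=\pm 1$ by unimodularity; I assume $\ll\phi,E\rr=1$, the case $-1$ being symmetric. Then $C(\Phi)\subset\overline{V^+}$, so $\c_2=\c_{\rm ext}$ and $k(\Phi,\c_2)=0$, while Lemma \ref{face} gives $k(\Phi,\c_1)(w+n\phi)=k_{12}(w)$, extending as the polynomial $(w,t\phi)\mapsto k_{12}(w)$ on $V=W\oplus\R\phi$. The first item of Lemma \ref{restrictionV} (which covers ${\rm Par}$ as well as ${\rm Pol}$) evaluates the right hand side to $k_{12}(w)/\ll\phi,E\rr=k_{12}(w)$, matching the left hand side on all of $V$.

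\textbf{Induction step.} For $|\Phi\setminus\Phi_0|\geq 2$, I choose $\phi\in\Phi\setminus\Phi_0$ such that $\Phi':=\Phi\setminus\{\phi\}$ still spans $V$; the chambers $\c'_1,\c'_2$ of $\Phi'$ containing $\c_1,\c_2$ are adjacent across the same wall with the same intersection chamber $\c_{12}$, since $\Phi'\cap W=\Phi_0$. Applying $D(\phi)$, Lemma \ref{lem:diffkc} converts the left hand side to $k(\Phi',\c'_1)-k(\Phi',\c'_2)$ and Proposition \ref{diffpar} converts the right hand side to ${\rm Par}(k_{12},\Phi'\setminus\Phi_0,E)$; these coincide by the inductive hypothesis. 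Hence ${\rm LHS}-{\rm RHS}$ is a polynomial constant along $\R\phi$, and it suffices to verify it vanishes on $W$. This splits into two sub-cases. If $(\Phi\setminus\Phi_0)\cap V^-\neq\emptyset$, both $\c_1,\c_2$ are interior chambers, so $k(\Phi,\c_1)$ and $k(\Phi,\c_2)$ both extend $k(\Phi)$ on $\c_{12}\cap\Gamma$ and hence agree as polynomials on $W$, giving ${\rm LHS}|_W=0$; Lemma \ref{restrictionuni} (second item) gives ${\rm RHS}|_W=0$. Otherwise $(\Phi\setminus\Phi_0)\subset V^+$, so $\c_2=\c_{\rm ext}$ and $W\cap C(\Phi)$ is a facet of $C(\Phi)$; Lemma \ref{face} yields ${\rm LHS}|_W=k_{12}$, and Lemma \ref{restrictionuni} (first item) yields ${\rm RHS}|_W=k_{12}$. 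The main obstacle compared to Theorem \ref{jumpvol} lies precisely in this latter sub-case: in the continuous setting ${\rm LHS}$ vanishes on $W$ for free by continuity of $\vol(\Phi)$, whereas here one must match the nontrivial boundary value $k_{12}$ on both sides via the two-case statement of Lemma \ref{restrictionuni}.
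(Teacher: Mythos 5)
Your proof is correct and follows essentially the same route as the paper: induction using the difference operators $D(\phi)$ (via Lemma \ref{lem:diffkc} and Proposition \ref{diffpar}), the same base case, and the same two-case matching of boundary values on $W$ through Lemma \ref{face} and Lemma \ref{restrictionuni}. The only deviation is that you apply $D(\phi)$ for a single $\phi\notin W$ and use unimodularity to conclude constancy along $\R\phi$ --- precisely the shortcut the paper notes is sufficient here but deliberately forgoes, instead checking $D(\phi)({\rm Leq}-{\rm Req})=0$ for every $\phi\in\Phi$ (including the two extra cases with $\phi\in W$) so that the argument transfers unchanged to the non-unimodular Theorem \ref{jumpkgen}.
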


\begin{remark}
By Theorem \ref{convol}, the function ${\rm
Par}(k_{12},\Phi\setminus \Phi_0,E)$ coincide, up to sign, on
$\Gamma_{\geq 0}$ with the discrete convolution $\sum_{w\in W\cap
\Gamma}k_{12}(w)k(R_+(\Psi))(a-\kappa_--w)$ of the polynomial
function $k_{12}(w)$ on $W$ by the partition function (shifted)
$k(R_+(\Psi))$.  Thus,  our residue  formula for
$k(\Phi,\c_1)-k(\Phi,\c_2)$ coincide with Paradan's formula
(\cite{Par}, Theorem 5.2) for the jump of the partition function.
\end{remark}

\begin{proof}
Denote by ${\rm Leq}(\Phi)$ the left
hand side and by ${\rm Req}(\Phi)$  the right hand side of Equation
(\ref{Leqdis}) above.

 As
the system is unimodular, the lattice $\Gamma$ is generated by
$\Gamma_0$ and any $\phi$ not in $W$. So in a parallel way to the
proof of the jump for the volume function, it would be sufficient
to verify that  $D(\phi) ({\rm Leq}(\Phi)-{\rm Req}(\Phi))=0$ for
some $\phi\in\Phi$ not in $W$, and  that  $({\rm Leq}(\Phi)-{\rm
Req}(\Phi))$ vanishes on $W$. However, in order that our proof
adapts without change to the non unimodular case, we will check
$D(\phi) ({\rm Leq}(\Phi)-{\rm Req}(\Phi))=0$ for any
$\phi\in\Phi$.

Let us first verify   Equation (\ref{Leqdis}) above  when  there
is only one vector $\phi$ of $\Phi$ not in $W$.  We can suppose
that $\Psi^+=\{\phi\}$ and $\Psi^-$ is empty. In this case the
wall $W$ is a facet of the cone $C(\Phi)$. The chamber $\c_1$ is
equal to $\c_{12}\times \R_{>0} \phi$, while $\c_2$ is the
exterior chamber. It is easy to see that $k(\Phi,\c_1)(w+t
\phi)=k_{12}(w)$, whereas $k(\Phi,\c_2)=0$. The equation follows
from the first item of Lemma \ref{restrictionV}.

Suppose that this is not the case. Let $\phi$ be in $\Phi$, and
denote $\Phi'=\Phi\setminus \{\phi\}$. We study the difference
equations satisfied by ${\rm Leq}(\Phi)$ and ${\rm Req}(\Phi)$.

We have several cases to consider.
\begin{itemize}
\item  $\phi$ is not in $W$.

Then  the  sequence  $\Phi'=\Phi\setminus \{\phi\}$ spans $V$ and
$W$  is a wall for $\Phi'$. The intersection of $\Phi'$ with $W$ is
$\Phi_0$.  Let $\c'_1$ and $\c'_2$ be the chambers for $\Phi'$
containing $\c_1, \c_2$. Then, they are adjacent with respect to
$W$. The chamber $\c_{12}$ remains the same. By Lemma
\ref{lem:diffkc}, we have
$$D(\phi) (k(\Phi,\c_1)-k(\Phi,\c_2))=k(\Phi',\c'_1)-k(\Phi',\c'_2).$$
By Proposition \ref{diffpar},
$$D(\phi) {\rm Par}(k_{12}, \Phi\setminus \Phi_0,E)={\rm Par}(k_{12},\Phi'\setminus\Phi_0,E).$$
By induction, we obtain $D(\phi) ({\rm Leq}(\Phi)-{\rm Req}(\Phi))=0$.

\item $\phi$ is in $W$ and $\Phi'_0=\Phi_0\setminus \{\phi\}$ span $W$.

Then  the  sequence $\Phi'=\Phi\setminus \{\phi\}$ spans $V$, and
$W$ is a wall for the system $\Phi'$. Let $\c'_1$ and $\c'_2$ be the
chambers for $\Phi'$ containing $\c_1,\c_2$. Then, they are adjacent
with respect to $W$. By Lemma \ref{lem:diffkc}, we have
$$D(\phi) (k(\Phi,\c_1)-k(\Phi,\c_2))=k(\Phi',\c'_1)-k(\Phi',\c'_2).$$

Let $\c'_{12}$ be the chamber for the sequence $\Phi'_0$ containing
$\c_{12}$.  The sequence $\Phi\setminus \Phi_0$ is equal to
$\Phi'\setminus \Phi'_0$. By Proposition \ref{diffpar}, we obtain
\[\begin{array}{ll}
D(\phi){\rm Par}(k(\Phi_0,\c_{12}), \Phi\setminus \Phi_0,E)&= {\rm Par}(k(\Phi'_0,\c'_{12}),\Phi\setminus \Phi_0,E)\\
&={\rm Par}(k(\Phi'_0,\c'_{12}),\Phi'\setminus \Phi'_0,E).\end{array}\]
So by induction, we conclude that
$D(\phi) ({\rm Leq}(\Phi)-{\rm Req}(\Phi))=0$ again.\\

\item $\phi$ is in $W$ and $\Phi'_0$ does not span $W$.
Then $W$ is not a wall for the sequence $\Phi'$.

It follows from the description given in Proposition
\ref{qualitative} of the regular  behavior of functions on chambers
that $k(\Phi',\c'_{1})-k(\Phi',\c'_2)=0$. Thus, by  Lemma
\ref{lem:diffkc},
$$D(\phi) (k(\Phi,\c_1)-k(\Phi,\c_2))=k(\Phi',\c'_1)-k(\Phi',\c'_2)=0.$$

Similarly, the function $k(\Phi_0,\c_{12})$ satisfies
$D(\phi)k(\Phi_0,\c_{12})=0$. As $$D(\phi){\rm
Par}(k(\Phi_0,\c_{12}), \Phi\setminus \Phi_0,E)= {\rm
Par}(D(\phi)k(\Phi_0,\c_{12}), \Phi\setminus \Phi_0,E),$$ we obtain
$D(\phi) ({\rm Leq}(\Phi))=0= D(\phi)({\rm Req}(\Phi))$.
\end{itemize}

We conclude that $D(\phi) ({\rm Leq}(\Phi)-{\rm Req}(\Phi))=0$ for
any $\phi\in \Phi$ so that ${\rm Leq}(\Phi)-{\rm Req}(\Phi)$ is
constant on $\Gamma=\Z\Phi$. It is thus sufficient to verify that
${\rm Leq}(\Phi)-{\rm Req}(\Phi)$  vanishes on $W$. If $\Phi^+$ and
$\Phi^-$ are both non empty, both chambers $\c_1$ and $\c_2$ are
interior chambers. So, ${\rm Leq}(\Phi)$ vanishes on $W$. By Lemma
\ref{restrictionuni}, ${\rm Req}(\Phi)$ also vanishes on $W$. If
$\Phi^-$ is empty, the wall $W$ is a facet of $C(\Phi)$. The
restriction of the function $k(\Phi,\c)$ to a facet is the function
$k(\Phi_0,\c_{12})$. Thus ${\rm Leq}(\Phi)$ restricts to $k_{12}$ on
$W$ by Lemma \ref{face}. This is the same for ${\rm Req}(\Phi)$ by
Lemma \ref{restrictionuni}. Thus we established the theorem.
\end{proof}\bigskip

\begin{figure}[!h]
\begin{center}\includegraphics{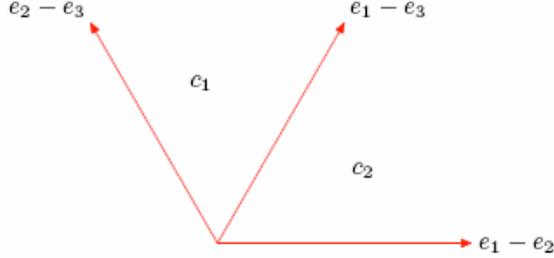}
\end{center}\caption{Chambers of $A_2$}\label{a2}
\end{figure}

\begin{figure}
\begin{center}
  \includegraphics[width=60mm]{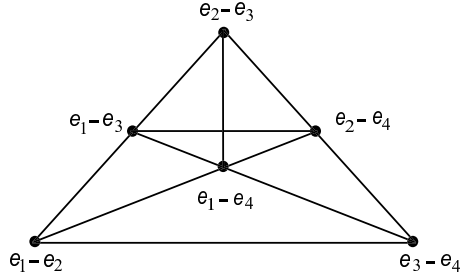}\\
  \caption{Chambers of $A_3$}\label{a3}
\end{center}
\end{figure}

We now give some examples of jumps in partition functions for the
root system of type $A_r$ (which is unimodular).

Let $Y$ be an $(r+1)$ dimensional vector space with basis $\{e_i,
i:1,\ldots, r+1\}$; we denote its dual basis by $\{e^i\}$. Let $V$
denote the vector space generated by the set of positive roots
\[\Phi(A_r)=\{e_i-e_j: \;1 \leq i <j \leq r+1\}\] of $A_r$. Then, $V$
is a hyperplane in $Y$ formed by points $v=\sum_{i=1}^{r+1}v_ie_i
\in Y$ satisfying $\sum_{i=1}^{r+1}v_i=0$. Using the explicit
isomorphism $p:\mathbb{R}^r \to V$ defined by $(a_1,\ldots,
a_r)\mapsto a_1e_1+\cdots+a_re_r-(a_1+\cdots+a_r)e_{r+1}$, we
write $a \in V$ as $a=\sum_{i=1}^r a_i(e_i-e_{r+1})$. Under $p^*$,
the vector $e_i-e_{r+1}$ determines the linear function $x_i$ in
$U=V^*\sim \mathbb{R}^r$.  The vector $a\in V$ lies in
$C(\Phi(A_r))$ if and only if $a_1+\cdots+a_i\geq 0$ for all
$i:1,\ldots,r$.  This will be our notation for subsequent examples
concerning $A_r$.

\begin{example}\label{ka2} We consider the root system of type $A_2$ with
$\Phi=\{e_1-e_2,e_2-e_3,e_1-e_3\}$ (see Figure ~\ref{a2}).
   The
cone $C(\Phi)$ is comprised of two chambers
$\c_1=C(\{e_1-e_3,e_2-e_3\})$ and $\c_2=C(\{e_1-e_3,e_1-e_2\})$.
We will calculate both $k(\Phi,\c_1)$ and $k(\Phi,\c_2)$ using our
formula in Theorem \ref{jumpkuni} iteratively starting from an
exterior chamber.

(i) Jump from the exterior chamber to $\c_1$: In this case, $E=e^1$,
$\Phi_0=\{e_2-e_3\}$, $\Phi^+=\{e_1-e_3,e_1-e_2\}$ and
$\Phi^-=\emptyset$.
\[\begin{array}{ll}
k(\Phi,\c_1)(a)-k(\Phi,\c_{\rm ext})(a)&={\rm Par}(1,\Phi \setminus \Phi_0,e^1)(a)\\
k(\Phi,\c_1)(a)&= \Res_{z=0} \left(
\frac{e^{a_1x_1+a_2x_2+za_1}}{(1-e^{-x_1-z})(1-e^{-x_1+x_2-z})}\right)_{x=0}\\
&=\Res_{z=0}\frac{e^{za_1}}{(1-e^{-z})^2}=1+a_1.
\end{array}\]

(ii) Jump from $\c_1$ to $\c_2$: We have $E=e^2$,
$\Phi_0=\{e_1-e_3\}$, $\Phi^-=\{e_1-e_2\}$ and $\Phi^+=\{e_2-e_3\}$.
\[\begin{array}{ll}
k(\Phi,\c_1)(a)-k(\Phi,\c_2)(a)&={\rm Par}(1,\Phi \setminus \Phi_0,e^2)(a)\\
&= \Res_{z=0} \left(
\frac{e^{a_1x_1+a_2x_2+za_2}}{(1-e^{-x_2-z})(1-e^{-x_1+x_2-z})}\right)_{x=0}\\
&=\Res_{z=0}\frac{e^{za_2}}{(1-e^{-z})(1-e^{z})}=-a_2.
\end{array}\]
Then, $k(\Phi,\c_2)(a)=1+a_1+a_2$.
\end{example}

\begin{example}\label{ka3} We now consider the root system of type $A_3$ (see
Figure~\ref{a3} which depicts the $7$ chambers of $A_3$ via the
intersection of the ray $\R^+\alpha$ of each root $\alpha$ with
the plane $3a_1+2a_2+a_3=1$).

We will calculate the jump in the partition function from
$\c_1:=C(\{e_1-e_2,e_1-e_3,e_1-e_4\})$ to
$\c_2:=C(\{e_1-e_2,e_3-e_4,e_1-e_4\})$. In this case, $E=e^3$,
$\Phi_0=\{e_1-e_4, e_2-e_4,e_1-e_2\}$, $\Phi^+=\{e_3-e_4\}$ and
$\Phi^-=\{e_1-e_3,e_2-e_3\}$.  Notice that $k_{12}$ is the partition
function corresponding to the chamber $C(\{e_1-e_4,e_1-e_2\})$ of
the copy of $A_2$ in $A_3$ having the set $\Phi_0$ as its set of
positive roots.   Using the final calculation in part (ii) of
Example~\ref{ka2} ($e_4$ here plays the role of $e_3$ in that
example), $k_{12}(a)=a_1+a_2+1$. Then, by Theorem \ref{jumpkuni},
\[\begin{array}{ll}
k(\Phi,\c_2)(a)-k(\Phi,\c_1)(a)&={\rm Par}(k_{12},\Phi \setminus \Phi_0,E)(a)\\
&= \Res_{z=0} \left((\partial_{x_1}+\partial_{x_2}+1)\cdot
\frac{e^{a_1x_1+a_2x_2+za_3}}{(1-e^{-z})(1-e^{-x_1+z})(1-e^{-x_2+z)})}\right)_{x=0}\\
&=\frac{1}{6}a_3(a_3-1)(2a_3+3a_2+3a_1+5).
\end{array}\]
\end{example}

\subsection{Khovanskii-Pukhlikov differential operator}\label{sec:kho}
 We recall that $\Gamma=\Z\Phi$. We normalize the measure $dx$ in order that it gives volume $1$ to a fundamental domain for $\Gamma^*$, and we write $v(\Phi,dx,\c)$ simply as
 $v(\Phi,\c)$.

We recall the relation between the function $k(\Phi,\c)$ and
$v(\Phi,\c)$. Define ${\rm Todd}(z)$ as the expansion of
$$\frac{z}{1- e^{-z}}=1+\frac{1}{2}z+\frac{1}{12}z^2+\cdots$$
in power series in $z$. For $\phi\in \Phi,$ ${\rm
Todd}(\partial(\phi))$ is a differential operator of infinite
order with constant coefficients.  If $p$ is a polynomial function
on $V$,  ${\rm Todd}(\partial(\phi))p$ is well defined and is a
polynomial on $V$. We denote by ${\rm Todd}(\Phi,\partial)$ the
operator defined on  polynomial functions on $V$ by
$${\rm Todd}(\Phi,\partial)=
\prod_{\phi\in \Phi}{\rm Todd}(\partial(\phi)).$$

The operator ${\rm Todd} (\Phi,\partial)$ transforms a polynomial
function into a  polynomial function on $\Gamma$.

The following result has been proven in  Dahmen-Micchelli
\cite{dm}.

\begin{theorem}\label{todduni}
Let $\c$ be a chamber. Then $$k(\Phi,\c)(a)= {\rm
Todd}(\Phi,\partial)\cdot v(\Phi,\c).$$
\end{theorem}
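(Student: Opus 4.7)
The plan is to argue by double induction: on $\dim V$ (outer), and on chambers of $\Phi$ via the chamber adjacency graph (inner). For the outer base case $\dim V=0$ (empty $\Phi$) the statement is trivial. For the inductive step, fix $V$ and assume the theorem for all lattices of smaller rank, and set
\[
F(\c) \;:=\; \mathrm{Todd}(\Phi,\partial)\,v(\Phi,\c)\;-\;k(\Phi,\c).
\]
On the exterior chamber both $v(\Phi,\c_{\rm ext})$ and $k(\Phi,\c_{\rm ext})$ vanish by convention, so $F(\c_{\rm ext})=0$. Since every chamber is reachable from $\c_{\rm ext}$ by a sequence of wall crossings, it suffices to show $F(\c_1)=F(\c_2)$ whenever $\c_1$ and $\c_2$ are adjacent across a wall $W$.

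Applying $\mathrm{Todd}(\Phi,\partial)$ to the volume wall-crossing formula of Theorem~\ref{jumpvol} and comparing with the partition wall-crossing formula of Theorem~\ref{jumpkuni}, the equality $F(\c_1)=F(\c_2)$ reduces to the single operator identity
\[
\mathrm{Todd}(\Phi,\partial)\cdot\mathrm{Pol}(v_{12},\Phi\setminus\Phi_0,E)\;=\;\mathrm{Par}(k_{12},\Phi\setminus\Phi_0,E),
\]
where $\Phi_0=\Phi\cap W$, $v_{12}=v(\Phi_0,\c_{12})$ and $k_{12}=k(\Phi_0,\c_{12})$. I would factor $\mathrm{Todd}(\Phi,\partial)=\mathrm{Todd}(\Phi_0,\partial)\circ\mathrm{Todd}(\Phi\setminus\Phi_0,\partial)$ and handle the two factors separately.

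First, push $\mathrm{Todd}(\Phi\setminus\Phi_0,\partial)$ past $V_{12}(\partial_x)$ (they act on disjoint variables) and inside $\mathrm{Res}_{z=0}$, using $\mathrm{Todd}(\partial(\psi))\,e^{\langle a,x+zE\rangle}=\tfrac{\langle\psi,x+zE\rangle}{1-e^{-\langle\psi,x+zE\rangle}}\,e^{\langle a,x+zE\rangle}$ for each $\psi\in\Phi\setminus\Phi_0$; this converts every factor $\langle\psi,x+zE\rangle^{-1}$ in $\mathrm{Pol}$ into $(1-e^{-\langle\psi,x+zE\rangle})^{-1}$, yielding $\mathrm{Todd}(\Phi\setminus\Phi_0,\partial)\,\mathrm{Pol}(v_{12},\Phi\setminus\Phi_0,E)=\mathrm{Par}(v_{12},\Phi\setminus\Phi_0,E)$. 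Second, for $w\in W$ the $\mathrm{Par}$-analogue of the second formula of Proposition~\ref{diffPv}, namely $\partial(w)\,\mathrm{Par}(p,\Psi,E)=\mathrm{Par}(\partial(w)\,p,\Psi,E)$, holds by the same argument: $\langle w,E\rangle=0$ collapses $\langle w,x+zE\rangle$ to $\langle w,x\rangle$ inside the residue and Lemma~\ref{Pdiff} applies. Iterating over $\phi_0\in\Phi_0$ and extending to the full Todd series (harmless because $v_{12}$ is a polynomial, so only finitely many terms of each series contribute) gives $\mathrm{Todd}(\Phi_0,\partial)\,\mathrm{Par}(v_{12},\Phi\setminus\Phi_0,E)=\mathrm{Par}\bigl(\mathrm{Todd}(\Phi_0,\partial)\,v_{12},\,\Phi\setminus\Phi_0,\,E\bigr)$. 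Third, $\Phi_0$ is unimodular in $W$ (any $W$-basis $\sigma\subset\Phi_0$ extended by a single $\phi\notin W$ is a $\Z$-basis of $\Gamma$, so $\sigma$ is a $\Z$-basis of $\Gamma\cap W=\Z\Phi_0$), whence the outer induction applied in dimension $r-1$ gives $\mathrm{Todd}(\Phi_0,\partial)\,v_{12}=k_{12}$. Chaining the three identities finishes the reduction, and hence the theorem.

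The main obstacle I anticipate is the careful bookkeeping of the infinite-order operator $\mathrm{Todd}$ against the residue and evaluation operations $\mathrm{Res}_{z=0}$ and $(\cdot)_{x=0}$ appearing in the definitions of $\mathrm{Pol}$ and $\mathrm{Par}$; the commutations are legitimate because at every step the Todd series acts on a polynomial of bounded degree, so only finitely many terms contribute. Once this is set up, the substantive content of the argument is just the elementary generating-function identity $\tfrac{y}{1-e^{-y}}=\mathrm{Todd}(y)$, which furnishes the exact conversion factor between the denominators of $\mathrm{Pol}$ and $\mathrm{Par}$.
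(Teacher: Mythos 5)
Your proof is correct and follows essentially the same route as the paper: reduction to the wall-crossing identity ${\rm Todd}(\Phi,\partial)\,{\rm Pol}(v_{12},\Phi\setminus\Phi_0,E)={\rm Par}(k_{12},\Phi\setminus\Phi_0,E)$ starting from the exterior chamber, the factorization ${\rm Todd}(\Phi,\partial)={\rm Todd}(\Phi_0,\partial)\,{\rm Todd}(\Phi\setminus\Phi_0,\partial)$, and induction on the dimension applied to the wall system $\Phi_0$. The only (immaterial) difference is that you convert ${\rm Pol}$ into ${\rm Par}$ first and then commute ${\rm Todd}(\Phi_0,\partial)$ past the residue (which requires the ${\rm Par}$-analogue of the $\partial(w)$-commutation, valid by the same Lemma \ref{Pdiff} argument), whereas the paper's Proposition \ref{toddpol} performs these two steps in the opposite order, using only the already-established commutation for ${\rm Pol}$ from Proposition \ref{diffPv}.
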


Here we give yet another proof of this theorem, by verifying that
our explicit formula for the jumps are related by the Todd
operator.

Let $W$ be a wall of $\Phi$ determined by $E$. Assume that
$\Phi^+$ is non empty. Let ${\rm Todd}(\Phi_0,\partial)$ be the
Todd operator related to the sequence $\Phi_0=\Phi\cap W$ which is also
unimodular.

\begin{proposition}\label{toddpol}
Let $p$ be a polynomial function on $W$. Then
$${\rm Todd}(\Phi,\partial){\rm Pol}(p,\Phi\setminus
\Phi_0,E)={\rm Par}({\rm Todd}(\Phi_0,\partial)P,
\Phi\setminus\Phi_0,E).$$
\end{proposition}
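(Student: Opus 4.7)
The plan is to apply the operator ${\rm Todd}(\Phi,\partial)$ directly to the residue formula for ${\rm Pol}(p,\Phi\setminus\Phi_0,E)$ and massage the result into the residue formula defining ${\rm Par}({\rm Todd}(\Phi_0,\partial)P,\Phi\setminus\Phi_0,E)$. Since ${\rm Todd}(\Phi,\partial)$ acts only on the variable $a$, it commutes with $P(\partial_x)$, with setting $x=0$, and with $\Res_{z=0}$. Moreover it acts on $e^{\langle a,x+zE\rangle}$ by multiplication by $\prod_{\phi\in\Phi}{\rm Todd}(\langle\phi,x+zE\rangle)$.

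Bringing this product inside the integrand and splitting $\Phi=\Phi_0\sqcup(\Phi\setminus\Phi_0)$, for each $\phi\in\Phi\setminus\Phi_0$ the identity ${\rm Todd}(y)/y=1/(1-e^{-y})$ converts the ${\rm Pol}$-denominator factor $\langle\phi,x+zE\rangle$ into the ${\rm Par}$-denominator factor $1-e^{-\langle\phi,x+zE\rangle}$. For $\phi\in\Phi_0$ we have $\langle\phi,E\rangle=0$, so the remaining factor $\prod_{\phi\in\Phi_0}{\rm Todd}(\langle\phi,x+zE\rangle)=\prod_{\phi\in\Phi_0}{\rm Todd}(\langle\phi,x\rangle)$ depends only on $x$. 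The integrand thus reduces to
\[P(\partial_x)\cdot\prod_{\phi\in\Phi_0}{\rm Todd}(\langle\phi,x\rangle)\cdot\frac{e^{\langle a,x+zE\rangle}}{\prod_{\phi\in\Phi\setminus\Phi_0}(1-e^{-\langle\phi,x+zE\rangle})}\Big|_{x=0}.\]

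The core step is to commute the $x$-dependent Todd factor past $P(\partial_x)$ at $x=0$. Iterating Lemma \ref{Pdiff} gives by induction on $k$ the operator identity $P(\partial_x)\langle\phi,x\rangle^k=\sum_{j=0}^{k}\binom{k}{j}\langle\phi,x\rangle^{k-j}(\partial(\phi)^jP)(\partial_x)$, all of whose $j<k$ summands vanish at $x=0$. Expanding each Todd series -- a finite sum when applied to a polynomial, since $\partial(\phi)^kP=0$ for $k>\deg P$ -- and iterating over $\phi\in\Phi_0$ then yields
\[P(\partial_x)\prod_{\phi\in\Phi_0}{\rm Todd}(\langle\phi,x\rangle)J(x)\Big|_{x=0}=\Big[\prod_{\phi\in\Phi_0}{\rm Todd}(\partial(\phi))P\Big](\partial_x)J(x)\Big|_{x=0}\]
for any $J$ holomorphic near $x=0$. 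Applied to $J(x)=e^{\langle a,x+zE\rangle}/\prod_{\phi\in\Phi\setminus\Phi_0}(1-e^{-\langle\phi,x+zE\rangle})$ and followed by $\Res_{z=0}$, this converts $P$ into $Q:={\rm Todd}(\Phi_0,\partial)P$ and recognizes the result as ${\rm Par}(Q,\Phi\setminus\Phi_0,E)(a)$, which is the right-hand side of the proposition. The only delicate point is this iterated commutation, which reduces by linearity and repeated Leibniz to the single-factor version of Lemma \ref{Pdiff}; the key simplification is that only the fully-commuted term survives at $x=0$.
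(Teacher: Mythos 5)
Your proof is correct and follows essentially the same route as the paper: both arguments rest on the identity ${\rm Todd}(y)/y=1/(1-e^{-y})$ applied inside the residue integrand for $\phi\in\Phi\setminus\Phi_0$, and on commuting the $\Phi_0$-part of the Todd operator onto $P$ via Lemma \ref{Pdiff}. The only difference is organizational: the paper first invokes Proposition \ref{diffPv} to replace $p$ by ${\rm Todd}(\Phi_0,\partial)p$ and then applies ${\rm Todd}(\Phi\setminus\Phi_0,\partial)$, whereas you apply the full operator at once and re-derive the commutation by an explicit induction, which is the same computation.
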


\begin{proof}
We have $${\rm Todd}(\Phi,\partial)=  {\rm Todd}(\Phi\setminus \Phi_0,\partial) {\rm Todd}(\Phi_0,\partial).$$
By  Proposition \ref{diffPv}
$${\rm Todd}(\Phi_0,\partial)
{\rm Pol}(p,\Phi\setminus
\Phi_0,E)={\rm Pol}({\rm Todd}(\Phi_0,\partial)p,
\Phi\setminus\Phi_0,E).$$

Let $q={\rm Todd}(\Phi_0,\partial)p$.
Then we apply ${\rm Todd}(\Phi\setminus \Phi_0,\partial)$ to $${\rm
Pol}(q,\Phi\setminus \Phi_0,E)=\Res_{z=0} \left(Q(\partial_x)\cdot
\frac{e^{\langle a,x+z E\rangle }}{\prod_{\phi\in \Phi\setminus \Phi_0}\langle
\phi,x+zE\rangle }\right)_{x=0}.$$
We obtain
$${\rm Todd}(\Phi\setminus \Phi_0,\partial){\rm Pol}(q,\Phi\setminus \Phi_0,E)(a)=  {\rm Par}(q,\Phi\setminus \Phi_0,E).$$
\end{proof}\bigskip
We now prove Theorem \ref{todduni} by induction.  We assume that
${\rm Todd}(\Phi_0,\partial) v(\Phi_0,\c_{12})= k(\Phi_0,\c_{12})$.
We then obtain from Proposition \ref{toddpol}:
\[\begin{array}{ll}
{\rm Todd}(\Phi,\partial)( v(\Phi,\c_1)-v(\Phi,\c_2))&=
{\rm Todd}(\Phi,\partial){\rm Pol}(v(\Phi_0,\c_{12}),\Phi\setminus \Phi_0,E)\\
&={\rm Par}({\rm Todd}(\Phi_0,\partial) v(\Phi_0,\c_{12}), \Phi\setminus \Phi_0,E)\\
&= {\rm Par}(k(\Phi_0,\c_{12}), \Phi\setminus \Phi_0,E)\\
&=k(\Phi,\c_1)-k(\Phi,\c_2).\end{array}\]

Starting from the exterior chamber where $k(\Phi,\c_{\rm ext})={\rm
Todd}(\Phi,\partial)\cdot v(\Phi,\c_{\rm ext})=0$, we obtain by
jumping over the walls  that $k(\Phi,\c)={\rm
Todd}(\Phi,\partial)\cdot v(\Phi,\c)$ for any chamber.

\section{Wall crossing formula for the partition function: general case}\label{sec:jumpgeneral}

In this section, we compute the jump $k(\Phi,\c_1)-k(\Phi,\c_2)$ of
the partition function $k(\Phi)$ across a wall when $\Phi$ is an
arbitrary system.

\subsection{A particular quasi-polynomial function}
Let $W$ be a hyperplane of $V$ determined by a primitive vector $E$, and
$\Gamma_0=W\cap \Gamma$.
We denote by $T$ the torus $V^*/ \Gamma^*$ and $T_0$ the torus $W^*/\Gamma_0^*$.
The restriction map $V^*\to W^*$ induces a surjective homomorphism
$r: T\to T_0$.  The kernel of $r$ is isomorphic to $\R/\Z$.

Let $Q$ be a quasi-polynomial function on $\Gamma$. We may write
$Q(a)=\sum_{y\in T}e_y(a)Q_y(a)$ where $y\in V^*$  give rise to an
element of finite order in $V^*/\Gamma^*$, still denoted by $y$. The
set of elements $g\in  T$ such that $r(g)=r(y)$ is isomorphic to
$\R/\Z$.
\begin{definition}\label{defPolquasi}
Let $Q(a)=\sum_{y\in T}e_y(a)Q_y(a)$ be a quasi-polynomial function
on $\Gamma$ and let $\Psi$ be a sequence of vectors not belonging to
$W$. We define, for $a\in \Gamma$,
$${\rm Para}(Q,\Psi,E)(a)=\sum_{y\in T} \sum_{g\in T|r(g)=r(y)}\Res_{z=0}\left(Q_y(\partial_x)\cdot
\frac{e^{\langle a,x+2i\pi g+ z E\rangle }}{\prod_{\psi\in
\Psi}(1-e^{-\langle \psi,x+2i\pi g+zE\rangle}) }\right)_{x=0}.$$
\end{definition}
\begin{remark}\label{rem:exp}
The definition may look strange, as we sum a priori on the infinite set $r(g)=r(y)$. However, in order that the function
$$\left(Q_y(\partial_x)\frac{e^{\langle a,x+2i\pi g+ z E\rangle }}{\prod_{\psi\in
\Psi}(1-e^{-\langle \psi,x+2i\pi g+zE\rangle}) }\right)_{x=0}$$ has
a pole at $z=0$,  we see that there must exist a $\psi$ in $\Psi$
such that $e^{2i\pi \ll\psi,g\rr}=1$. As $r(g)$ is fixed, this
leaves a finite number of possibilities for $g$. More concretely, if
$y$ is given, any $g$ such that $r(g)=r(y)$ is of the form $g=y+GE$,
and $G$ must satisfy $e^{2i\pi G\ll \psi,E\rr}=e^{-2i\pi \ll
y,\psi\rr}$ for some $\psi\in \Psi$. Furthermore, we see that if the
integers $\ll \psi,E\rr$ are equal to $\pm 1$ for all $\psi\in
\Psi$, and if $Q$ is polynomial (so that $y=0$ on the above
equation), then  ${\rm Para}(Q,\Psi,E)$ is equal to ${\rm
Par}(Q,\Psi,E)$.
\end{remark}
\medskip
It is easy to see  that  ${\rm Para}(Q,\Psi,E)(a)$ is a
quasi-polynomial function of $a\in V$. Furthermore,  using the same
argument as in the proof of Lemma \ref{lem:Pp}, we obtain the
following.

\begin{lemma}
The quasi-polynomial function ${\rm Para}(Q,\Psi,E)$ depends only on
the restriction $q$ of $Q$ to $\Gamma_0$.
\end{lemma}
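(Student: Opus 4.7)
The plan is to reduce the lemma to the statement: if the restriction of $Q$ to $\Gamma_0$ is identically zero, then ${\rm Para}(Q,\Psi,E)=0$. By linearity in $Q$ (the functional $Q\mapsto{\rm Para}(Q,\Psi,E)$ respects the decomposition $Q=\sum_y e_y Q_y$), it suffices to treat this vanishing case.

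First I would analyze what it means for $Q$ to vanish on $\Gamma_0$. Writing the canonical decomposition $Q(a)=\sum_{y\in F}e_y(a)Q_y(a)$ with $F$ a finite subset of $T$, the restriction to $a\in\Gamma_0$ collapses to $Q|_{\Gamma_0}(a)=\sum_{t\in r(F)}e_t(a)\bigl(\sum_{y\in F,\,r(y)=t}Q_y(a)\bigr)$, since $e_y(a)=e_{r(y)}(a)$ whenever $a\in\Gamma_0$. The characters $e_t$ attached to distinct points of $T_0$ are linearly independent on $\Gamma_0$, so the vanishing of $Q|_{\Gamma_0}$ forces, for each $t\in T_0$,
$$P_t|_W=0,\qquad\text{where}\qquad P_t:=\sum_{y\in F,\,r(y)=t}Q_y.$$

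Next I would regroup the defining sum for ${\rm Para}(Q,\Psi,E)$ according to the fibers of $r$. The inner sum $\{g\in T\mid r(g)=r(y)\}$ depends only on $t=r(y)$, so I can exchange summations and pull $\sum_{r(y)=t}Q_y(\partial_x)=P_t(\partial_x)$ out:
$${\rm Para}(Q,\Psi,E)(a)=\sum_{t\in T_0}\sum_{g:\,r(g)=t}\Res_{z=0}\Bigl(P_t(\partial_x)\,F(x,g,z,a)\Bigr)_{x=0},$$
where $F(x,g,z,a):=\dfrac{e^{\langle a,x+2i\pi g+zE\rangle}}{\prod_{\psi\in\Psi}(1-e^{-\langle\psi,x+2i\pi g+zE\rangle})}$ depends on $(x,z)$ only through the combination $x+zE$ (the variable $g$ enters additively but independently). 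This regrouping is legitimate because, by the observation in Remark \ref{rem:exp}, only finitely many $g$ with $r(g)=t$ actually contribute to the residue.

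Finally I would imitate the proof of Lemma \ref{lem:Pp} fiber by fiber. Since $P_t$ vanishes on $W$, factor $P_t=E\cdot R_t$ for a polynomial $R_t$ on $V$. Because $F(x,g,z,a)=\widetilde F(x+zE,g,a)$ for a suitable $\widetilde F$, applying $E(\partial_x)$ to $F$ in the $x$ variable is the same as differentiating in $z$:
$$P_t(\partial_x)F(x,g,z,a)=E(\partial_x)R_t(\partial_x)F(x,g,z,a)=\frac{d}{dz}\Bigl(R_t(\partial_x)\widetilde F(x+zE,g,a)\Bigr).$$
The residue at $z=0$ of a $z$-derivative of a meromorphic germ is zero, and setting $x=0$ commutes with $d/dz$, so each term in the regrouped double sum vanishes and ${\rm Para}(Q,\Psi,E)=0$. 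The only subtle point is the bookkeeping of the restriction/decomposition in the first step; the residue computation itself is a direct reprise of Lemma \ref{lem:Pp}, now with $2i\pi g$ present as an extra constant shift of $x$ that does not interfere with either the factorization $P_t=E\cdot R_t$ or the identification of $E(\partial_x)$ with $d/dz$.
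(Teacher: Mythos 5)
Your argument is correct and is essentially the proof the paper intends: the paper's own proof consists of the single remark that ``the same argument as in the proof of Lemma \ref{lem:Pp}'' applies, and your version --- regrouping the double sum along the fibers of $r$, observing that vanishing of $Q$ on $\Gamma_0$ forces $P_t|_W=0$ for each $t\in T_0$, factoring $P_t=E\cdot R_t$, and converting $E(\partial_x)$ into $d/dz$ so that each residue vanishes --- is exactly the fiberwise reprise of that argument, with the finiteness of contributing $g$ (Remark \ref{rem:exp}) justifying the regrouping. The only step worth flagging as implicitly used is that $P_t|_W=0$ follows from linear independence of distinct characters of $T_0$ with polynomial coefficients together with Zariski density of $\Gamma_0$ in $W$, both standard.
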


Choose a primitive vector $F$ such that $\Gamma=\Gamma_0\oplus \Z
F$. Then we see that any quasi-polynomial function $q$ on $\Gamma_0$
extends to a quasi-polynomial function $Q$ on $\Gamma$.

\begin{definition}
Let $q$ be a quasi-polynomial function on $\Gamma_0$. We define
$${\rm Para}(q,\Psi,E):={\rm Para}(Q,\Psi,E),$$ where $Q$ is any quasi-polynomial function on $\Gamma$ extending $q$.
\end{definition}

\begin{remark}
Let $q$ be a quasi-polynomial function on $\Gamma_0$; we may write
$q(w)=\sum_{y\in T_0}e_y(w)q_y(w)$.  Let $Q_y$ denote any extension
of the polynomial function $q_y$ on $V$.  Then, while calculating
${\rm Para}(q,\Psi,E)$, we are in fact summing over $g \in T$ such
that $r(g)=y$.
\end{remark}

\begin{proposition}\label{pro:difQp}
Let $\psi\in \Psi$. Then
$$D(\psi) {\rm Para}(q, \Psi,E)={\rm Para}(q, \Psi\setminus \{\psi\},E).$$
Let $w\in \Gamma_0$. Then,
$$D(w) {\rm Para}(q,\Psi,E)={\rm Para}(D(w)q,\Psi,E).$$
\end{proposition}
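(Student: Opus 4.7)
The plan is to verify both difference equations directly from the residue formula in Definition~\ref{defPolquasi}, adapting the argument used for Proposition~\ref{diffpar} to the quasi-polynomial setting. For the first equation, fix $\psi\in\Psi$ and act with $D(\psi)=1-\tau(\psi)$ termwise inside the double sum defining ${\rm Para}(Q,\Psi,E)$. Since $a$ appears only in $e^{\langle a,x+2i\pi g+zE\rangle}$, the operator $D(\psi)$ produces the factor $1-e^{-\langle \psi,x+2i\pi g+zE\rangle}$, which exactly cancels the corresponding factor in the denominator; what remains term by term is the defining expression for ${\rm Para}(Q,\Psi\setminus\{\psi\},E)$.

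For the second equation, linearity of ${\rm Para}$ in its first argument reduces matters to the translation identity $\tau(w){\rm Para}(Q,\Psi,E)={\rm Para}(\tau(w)Q,\Psi,E)$. Using $\langle w,E\rangle=0$, I split $e^{\langle a-w,x+2i\pi g+zE\rangle}=e^{\langle a,x+2i\pi g+zE\rangle}\,e^{-\langle w,x\rangle}\,e^{-2i\pi\langle g,w\rangle}$, and then apply the commutation relation $Q_y(\partial_x)\,e^{-\langle w,x\rangle}=e^{-\langle w,x\rangle}\,(\tau(w)Q_y)(\partial_x)$ (already used in the proof of Proposition~\ref{diffpar}), followed by setting $x=0$. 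This rewrites $\tau(w){\rm Para}(Q,\Psi,E)(a)$ as
$$\sum_{y\in T}\sum_{g:\,r(g)=r(y)}e^{-2i\pi\langle g,w\rangle}\,\Res_{z=0}\left((\tau(w)Q_y)(\partial_x)\cdot\frac{e^{\langle a,x+2i\pi g+zE\rangle}}{\prod_{\psi\in\Psi}(1-e^{-\langle\psi,x+2i\pi g+zE\rangle})}\right)_{x=0}.$$
On the other hand the decomposition of $\tau(w)Q$ reads $(\tau(w)Q)(a)=\sum_y e_y(a)\,e^{-2i\pi\langle y,w\rangle}\,(\tau(w)Q_y)(a)$, so that ${\rm Para}(\tau(w)Q,\Psi,E)(a)$ is the same double sum but with the phase $e^{-2i\pi\langle g,w\rangle}$ replaced by $e^{-2i\pi\langle y,w\rangle}$.

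The hard part will be matching these two phases: I need $e^{-2i\pi\langle g,w\rangle}=e^{-2i\pi\langle y,w\rangle}$ whenever $r(g)=r(y)$. Since $\ker r\cong\R E/\Z E$, the condition $r(g)=r(y)$ forces $g-y\equiv tE\pmod{\Gamma^*}$ for some $t\in\R$; then $\langle g-y,w\rangle=t\langle E,w\rangle+\langle\gamma^*,w\rangle\in\Z$, because $w\in W$ kills $E$ while $w\in\Gamma$ pairs integrally against $\Gamma^*$. The main conceptual obstacle is exactly this bookkeeping: the phase produced by translating the exponential in $a$ must match the phase coming from Fourier-reindexing $\tau(w)Q$, and the match works precisely because $w\in W\cap\Gamma=\Gamma_0$ — both conditions are essential.
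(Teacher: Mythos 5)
Your proof is correct and follows essentially the same route as the paper: termwise cancellation of the factor $1-e^{-\langle\psi,x+2i\pi g+zE\rangle}$ for the first identity, and for the second the commutation relation $Q_y(\partial_x)e^{-\langle w,x\rangle}=e^{-\langle w,x\rangle}(\tau(w)Q_y)(\partial_x)$ together with the observation that $\langle g-y,w\rangle\in\Z$ when $r(g)=r(y)$ and $w\in\Gamma_0$. The phase bookkeeping you single out as the main obstacle is precisely what the paper compresses into its one-line identity $e^{\langle a-w,x+2i\pi g+zE\rangle}=e^{-\langle w,x\rangle}e_y(-w)e^{\langle a,x+2i\pi g+zE\rangle}$, so you have simply made explicit what the authors leave implicit.
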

\begin{proof}
The first formula is immediate from the definition.
For the second formula, if $r(g)=r(y)$ and
if $w\in \Gamma_0$, then $$ e^{\ll a-w, x+2i\pi g+zE\rr}= e^{-\ll w,x\rr} e_y(-w) e^{\ll a, x+2i\pi g+zE\rr},$$  and the result follows as in the proof of Proposition \ref{diffpar}.
\end{proof}

\subsection{Discrete convolution}
We take the same notations as in Section \ref{sec:convdic}.  However here the
system $\Psi$ is arbitrary.
We define, as before, the function $K^+(\Psi)$ on $\Gamma_{\geq 0}$
by the equation
\begin{equation}
\prod_{\psi\in \Psi}\frac{1}{1-e^{-\langle \psi,x\rangle }}=\sum_{a \in
\Gamma_{\geq 0}}K^+(\Psi)(a) e^{-\langle a,x\rangle }.
\end{equation}

Let $q$ be a quasi-polynomial function on $\Gamma_0$. Define for
$a\in \Gamma_{\geq 0}$
$$C(q,\Psi,E)(a):=\sum_{ w \in \Gamma_0}
q(w ) K^+(\Psi)(a- w).$$

\begin{theorem}
Let $q$ be a quasi-polynomial function  on $\Gamma_0$. Assume that
$\Psi^+$ is non empty. Then, for $a\in \Gamma_{\geq 0}$,
$$C(q,\Psi,E)(a) = {\rm Para}(q,\Psi,E)(a).$$
\end{theorem}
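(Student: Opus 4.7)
The plan is to mimic the proof of Theorem \ref{convol}, replacing the simple pole at $u=1$ with the richer set of poles at roots of unity that appear when the weights $\langle \psi,E\rangle$ are arbitrary, while simultaneously carrying along the characters appearing in the quasi-polynomial $q$.

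By linearity and the decomposition $q(w)=\sum_{y\in T_0}e_y(w)q_y(w)$, it suffices to treat one character $y \in T_0$ at a time. Fix a lift $g\in T$ of $y$ (so $r(g)=y$) and a polynomial extension $Q_y$ of $q_y$ to $V$. As in Theorem \ref{convol},
$$\sum_{w\in\Gamma_0}e_y(w)q_y(w)K^+(\Psi)(a-w)=\Big(Q_y(\partial_x)\,e^{\langle a,x+2i\pi g\rangle}\,G_{x,g}(a)\Big)_{x=0},$$
where $G_{x,g}(a)=\sum_w K^+(\Psi)(a-w)e^{-\langle a-w,x+2i\pi g\rangle}$ depends only on $n=\langle a,E\rangle$. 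Using $\langle w,E\rangle=0$ and the defining equation of $K^+(\Psi)$, its discrete Laplace transform in the variable $u=e^{-z}$ evaluates to
$$L_{\rm dis}(G_{x,g})(u)=\frac{1}{\prod_{\psi\in\Psi}(1-e^{-\langle\psi,x+2i\pi g\rangle}u^{\langle\psi,E\rangle})},$$
and Cauchy's formula on $|u|=\epsilon$ recovers $G_{x,g}(nF)$ as a contour integral.

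I then deform this contour outward to $|u|=R\to\infty$. Because $\Psi^+$ is non-empty and $n\geq 0$, the integrand decays like $u^{-n-1-\sum_{\psi\in\Psi^+}\langle\psi,E\rangle}$ at infinity, so the original integral equals minus the sum of residues at the poles in $\epsilon<|u|<\infty$. For $x$ near $0$ these poles lie on the unit circle at $u=e^{-2i\pi G'}$ with $G'\in\R/\Z$ satisfying $\langle g+G'E,\psi\rangle\in\Z$ for some $\psi\in\Psi$. Since $E$ vanishes on $W$, the map $G'\mapsto g':=g+G'E$ surjects $\R/\Z$ onto the fiber $r^{-1}(y)\subset T$, and these unit-circle poles correspond bijectively to those $g'\in r^{-1}(y)$ for which the integrand of Definition \ref{defPolquasi} is actually singular at $z=0$ (cf.\ Remark \ref{rem:exp}). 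The local substitution $u=e^{-2i\pi G'}e^{-z'}$ yields the identity $x+2i\pi g+zE=x+2i\pi g'+z'E$ and $du/u=-dz'$, so after absorbing the prefactor $e^{\langle a,x+2i\pi g\rangle}$ each residue rewrites as $\Res_{z'=0}$ of the Definition \ref{defPolquasi} integrand evaluated at $g'$. Applying $Q_y(\partial_x)$, setting $x=0$, summing over $G'$ (equivalently over $g'\in r^{-1}(y)$), and finally over $y\in T_0$ produces ${\rm Para}(q,\Psi,E)(a)$.

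The main obstacle lies in the contour deformation: verifying decay at $u=\infty$ (which uses $\Psi^+\neq\emptyset$ and $n\geq0$ in an essential way), tracking signs and orientations under $u=e^{-z}$, and matching bijectively the unit-circle poles in $u$ with the elements of $r^{-1}(y)$ that contribute to Definition \ref{defPolquasi}. Once these bookkeeping issues are resolved, the argument runs exactly parallel to that of Theorem \ref{convol}.
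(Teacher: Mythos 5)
Your proof is correct and follows essentially the same route as the paper: reduce to the functions $G_{x,\cdot}$, compute the discrete Laplace transform, and apply the residue theorem to trade the pole at $u=0$ for the poles at roots of unity, which are matched with the elements $g'\in r^{-1}(y)$ via the substitution $u=\zeta e^{-z}$. The only (cosmetic) differences are that you decompose $q$ over $T_0$ and choose lifts rather than decomposing an extension $Q$ over $T$ (the paper's own remark notes these are equivalent), and you make the decay estimate at $u=\infty$ slightly more explicit than the paper does.
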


\begin{proof}
We need to compute,  for $a\in \Gamma_{\geq 0}$,
$$S(a):=\sum_{ w \in \Gamma_0}q(w ) K^+(\Psi)(a-w).$$
This sum is over a finite set.

Let $Q(a)=\sum_{y\in T} e_y(a)  Q_y(a)$  be any quasi-polynomial
function on $\Gamma$ extending $q$. We may write
$$S(a)=\sum_{y \in T} \left(Q_{y}(\partial_x)\cdot \sum_{ w \in \Gamma_0}K^+(\Psi)(a-w ) e^{\langle  w ,x+2i\pi y \rangle}\right)_{x=0}.$$
Define $$G_{x,y}(a)=\sum_{ w \in \Gamma_0}K^+(\Psi)(a-w)
 e^{-\langle a- w ,x+2i\pi y \rangle }.$$
Then $G_{x,y}(a)$ depends in an analytic way of the variable $x\in U$, and we have
\begin{equation}\label{mySg}
S(a)=\sum_{y\in T} \left(Q_{y}(\partial_x)\cdot e^{\langle a,x+2i\pi
y\rangle }G_{x,y}(a)\right)_{x=0}.
\end{equation}
The function  $a\mapsto G_{x,y}(a)=\sum_{ w \in
\Gamma_0}K^+(\Psi)(a-w )
 e^{-\langle a- w ,x+2i\pi y \rangle }$  is a function on $\Gamma/\Gamma_0=\Z F$.
To identify the function $G_{x,y}(n F)$, with $n=\langle
a,E\rangle$, we compute its discrete Laplace transform in one
variable. With the same proof as the proof in Theorem
\ref{convol}, we obtain that for $x$ in the dual cone to
$C(R_+(\Psi))$, $L_{\rm dis}(G_{x,y})(u)=\sum_{n\geq 0}
G_{x,y}(nF)u^n$ is convergent for $|u|<1$ and we obtain
$$G_{x,y}(a)=\frac{1}{2i\pi}\int_{|u|=\epsilon} \frac{u^{-n}}{\prod_{\psi\in \Psi}
(1-e^{-\langle \psi,x+2i\pi y \rangle
}u^{\ll \psi,E\rr })}\frac{du}{u}$$ where $n=\langle a,E\rangle .$

Thus Formula
(\ref{mySg}) becomes
$$S(a)=\sum_{y\in T} \left(Q_{y}(\partial_x)\cdot e^{\langle a,x+2i\pi y\rangle }
\frac{1}{2i\pi}\int_{|u|=\epsilon} \frac{u^{-n}}{\prod_{\psi\in
\Psi} (1-e^{-\langle \psi,x+2i\pi y\rangle
}u^{\ll\psi,E\rr})}\frac{du}{u}\right)_{x=0}.$$

Let $$F_{y}(u)=\left(Q_{y}(\partial_x)\cdot e^{\langle a,x+2i\pi
y\rangle} \frac{u^{-n}}{\prod_{\psi\in \Psi} (1-e^{-\langle
\psi,x+2i\pi y \rangle }u^{\ll \psi,E\rr})}\right)_{x=0}.$$ As at
least one of the $\ll \psi,E\rr$ is strictly positive and $n\geq 0$,
the function $F_y(u)$ has no pole at $\infty$. The  integral over
$|u|=\epsilon$ computes the residue of $F_y(u)$ at $u=0$. All other
poles are such that $ u^{\ll \psi,E\rr}= e^{\langle \psi,2i\pi y
\rangle}$ for some $\psi\in \Psi$, so they are roots of unity
$\zeta=e^{2i\pi G}$ with $G\in \R/\Z$. Any element $g\in T$ with
$r(g)=r(y)$ is of the form $g=y+G E$ with some $G$. We obtain
$$S(a)=-\sum_{y\in T} \sum_{G\in \R/\Z}  \Res_{u=e^{2i\pi G}} F_y(u).$$
We write $u=e^{2i\pi G}e^{-z}$ in the neighborhood of $e^{2i\pi G}$ and we obtain
the formula of the theorem.
\end{proof}\bigskip

Similarly, we compute the restriction of ${\rm Para}(q,\Psi,E)$ to
$W\cap\Gamma$.
\begin{lemma}\label{restriction}
\begin{itemize}
\item
If $|\Psi^-|=\emptyset$, then the restriction of ${\rm
Para}(q,\Psi,E)$ to $W$ is equal to $q$.
\item If $|\Psi^-|>0$, then the restriction of ${\rm Para}(q,\Psi,E)$
to $W$ vanishes.
\end{itemize}
\end{lemma}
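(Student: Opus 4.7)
The plan is to deduce the lemma directly from the convolution theorem proved immediately before it, reducing the problem to an elementary analysis of the support of $K^+(\Psi)$.

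First I would observe that if $a\in W\cap\Gamma=\Gamma_0$, then $\langle a,E\rangle=0$, so $a\in\Gamma_{\geq 0}$, and the preceding theorem applies to give
$${\rm Para}(q,\Psi,E)(a)=\sum_{w\in\Gamma_0}q(w)\,K^+(\Psi)(a-w).$$
Since both $a$ and $w$ lie in $W$, each difference $b:=a-w$ satisfies $\langle b,E\rangle=0$. The whole argument then hinges on controlling which such $b$ belong to the support of $K^+(\Psi)$.

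Next I would recall from Section \ref{sec:convdic} that $K^+(\Psi)$ is supported in $-\kappa_-+C(R_+(\Psi))$, and that every element of $R_+(\Psi)$ pairs strictly positively with $E$ (by construction of $R_+$). In the case $\Psi^-=\emptyset$, one has $\kappa_-=0$, so the support is contained in $C(\Psi^+)$; any point $b$ there has $\langle b,E\rangle\geq 0$ with equality only at $b=0$, forcing $w=a$ in the sum. Since $K^+(\Psi)(0)=1$ when $\Psi^-=\emptyset$, the sum reduces to $q(a)$. In the case $\Psi^-\neq\emptyset$, one has $\langle\kappa_-,E\rangle<0$, so every element of $-\kappa_-+C(R_+(\Psi))$ pairs strictly positively with $E$; no $b$ with $\langle b,E\rangle=0$ lies in the support, and the sum vanishes term by term.

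The only subtlety worth being careful about is the reduction from the quasi-polynomial case to the statement of the convolution theorem: the theorem is stated for quasi-polynomial $q$ on $\Gamma_0$, and ${\rm Para}(q,\Psi,E)$ is defined via any extension to a quasi-polynomial on $\Gamma$, so one just needs to note that the convolution identity together with the support analysis of $K^+(\Psi)$ is insensitive to the choice of extension. No further computation is required; the residue definition of ${\rm Para}$ itself never needs to be unpacked, which is what makes the argument short.
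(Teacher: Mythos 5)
Your argument is correct and is essentially the paper's own proof: the authors likewise evaluate the convolution formula $\sum_{w'\in\Gamma_0}q(w')K^+(\Psi)(a-w')$ at $a\in W\cap\Gamma$, note that only $K^+(\Psi)(0)$ can contribute, and use $K^+(\Psi)(0)=1$ or $0$ according to whether $\Psi^-$ is empty; you have merely made explicit the support analysis of $K^+(\Psi)$ that the paper leaves implicit.
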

\begin{proof}
The sum  formula gives ${\rm Para}(q,\Psi,E)(w)= K^+(\Psi)(0)q(w)$ and
$K^+(\Psi)(0)$ vanishes as soon as $|\Psi^-|>0$.
\end{proof}

\subsection{The jump for the partition function}
Let $\Phi$ be a sequence of vectors  spanning the lattice $\Gamma$.
Let $k(\Phi)(a)$ be the partition function given by quasi-polynomial
functions on chambers. We consider as in Section \ref{sec:jumpuni}
 two adjacent
chambers $\c_1$ and $\c_2$ separated by a wall $W$. As before,
$\Phi_0$ denotes $W\cap \Phi$.  Let $k_{12}=k(\Phi_0,\c_{12})$ be
the quasi-polynomial function on $W\cap \Gamma$ associated to the
chamber $\c_{12}$ of $\Phi_0$. Consider the sequence
$\Psi=\Phi\setminus \Phi_0$. We choose $E\in U$ such that $\Psi^+$
is non empty. In the preceding section, we have associated  a
quasi-polynomial function ${\rm Para}(k_{12}, \Phi\setminus
\Phi_0,E)$ on $\Gamma$ to $\Phi\setminus\Phi_0$, $E$ and $k_{12}$ .
 We recall that
$${\rm Para}(k_{12},\Phi\setminus \Phi_0,E)(a)=\sum_{w\in W\cap \Gamma} k_{12}(w) K^+(\Phi\setminus \Phi_0)(a-w).$$

\begin{theorem}\label{jumpkgen}
Let $k_{12}=k(\Phi_0,\c_{12})$ be the  quasi-polynomial function on
$\Gamma_0$ associated to the chamber $\c_{12}$. Then, if $\ll
E,\c_1\rr>0$, we have
\begin{equation}
k(\Phi,\c_1)-k(\Phi,\c_2)={\rm Para}(k_{12},\Phi\setminus
\Phi_0,E).
\end{equation}
\end{theorem}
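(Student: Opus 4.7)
The plan is to mimic the proof of Theorem \ref{jumpkuni} step by step, replacing ${\rm Par}$ by ${\rm Para}$ and invoking the quasi-polynomial analogues assembled in this section. The principal ingredients are: Proposition \ref{pro:difQp}, which supplies both $D(\psi){\rm Para}(q,\Psi,E) = {\rm Para}(q,\Psi\setminus\{\psi\},E)$ for $\psi\in\Psi$ and $D(w){\rm Para}(q,\Psi,E) = {\rm Para}(D(w)q,\Psi,E)$ for $w\in\Gamma_0$; Lemma \ref{restriction}, which computes the restriction of ${\rm Para}(q,\Psi,E)$ to $\Gamma_0$; Lemma \ref{lem:diffkc}, the analogous difference equations for $k(\Phi,\c)$; Lemma \ref{face}, for the restriction of $k(\Phi)$ to a facet; and the standing hypothesis $\Gamma=\Z\Phi$, which guarantees that a quasi-polynomial function on $\Gamma$ annihilated by every $D(\phi)$, $\phi\in\Phi$, is constant.

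Set ${\rm Leq}(\Phi):=k(\Phi,\c_1)-k(\Phi,\c_2)$ and ${\rm Req}(\Phi):={\rm Para}(k_{12},\Phi\setminus\Phi_0,E)$. The plan proceeds by induction on $|\Phi\setminus\Phi_0|$. The base case $|\Phi\setminus\Phi_0|=1$ is handled as in the unimodular argument: $\Psi^+=\{\phi\}$, $\Psi^-=\emptyset$, $\c_1=\c_{12}+\R_{>0}\phi$ is the unique interior chamber, and $\c_2=\c_{\rm ext}$ with $k(\Phi,\c_2)=0$. Since $W$ is then a facet of $C(\Phi)$, Lemma \ref{face} gives $k(\Phi,\c_1)|_{\Gamma_0}=k_{12}$; on the other side Lemma \ref{restriction} (with $|\Psi^-|=0$) gives ${\rm Req}(\Phi)|_{\Gamma_0}=k_{12}$. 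The common recurrence $D(\phi)(\cdot)=0$ on both sides (the left by Lemma \ref{lem:diffkc}, the right by Proposition \ref{pro:difQp} together with ${\rm Para}(k_{12},\emptyset,E)=0$) propagates the equality off $W$ and over $\Gamma$.

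For the inductive step, I would verify $D(\phi)({\rm Leq}(\Phi)-{\rm Req}(\Phi))=0$ for every $\phi\in\Phi$, by the same three-case analysis as in the unimodular proof with $\Phi':=\Phi\setminus\{\phi\}$. Case (i), $\phi\notin W$: $\Phi'$ still spans $V$, the chambers $\c'_1,\c'_2$ of $\Phi'$ containing $\c_1,\c_2$ are adjacent across $W$ with unchanged intermediary chamber $\c_{12}$, and Lemma \ref{lem:diffkc} combined with the first formula of Proposition \ref{pro:difQp} reduces the claim to the inductive hypothesis for $\Phi'$. Case (ii), $\phi\in\Phi_0$ with $\Phi'_0:=\Phi_0\setminus\{\phi\}$ still spanning $W$: $W$ remains a wall for $\Phi'$ with intermediary chamber $\c'_{12}\supset\c_{12}$, $\Phi'\setminus\Phi'_0 = \Phi\setminus\Phi_0$, and $D(\phi)k_{12}=k(\Phi'_0,\c'_{12})$ by Lemma \ref{lem:diffkc}; the second formula of Proposition \ref{pro:difQp} then reduces to induction. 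Case (iii), $\phi\in\Phi_0$ with $\Phi'_0$ no longer spanning $W$: $W$ is then no longer a wall of $\Phi'$ (no $r-1$ linearly independent vectors of $\Phi'$ lie in $W$, since every non-$W$ vector in $\Phi'$ lies in $\Phi\setminus\Phi_0$), so $\c_1$ and $\c_2$ sit in a common chamber of $\Phi'$, giving $D(\phi)(k(\Phi,\c_1)-k(\Phi,\c_2))=0$ via Lemma \ref{lem:diffkc}; simultaneously $D(\phi)k_{12}=0$ by the same Lemma applied to $\Phi_0$, whence $D(\phi){\rm Req}(\Phi)=0$ by Proposition \ref{pro:difQp}. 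Since $\Gamma=\Z\Phi$, it follows that ${\rm Leq}(\Phi)-{\rm Req}(\Phi)$ is constant on $\Gamma$.

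Finally I would identify this constant as zero by evaluating on $\Gamma_0$. If $\Psi^-\neq\emptyset$, both chambers are interior, so for $a\in\overline{\c_{12}}\cap\Gamma$ both $k(\Phi,\c_1)(a)$ and $k(\Phi,\c_2)(a)$ equal $k(\Phi)(a)$; the relative interior of $\c_{12}$ being a nonempty open cone in $W$, ${\rm Leq}(\Phi)|_{\Gamma_0}$ vanishes, and ${\rm Req}(\Phi)|_{\Gamma_0}=0$ by Lemma \ref{restriction}. If $\Psi^-=\emptyset$, then $\c_2=\c_{\rm ext}$, $k(\Phi,\c_2)=0$, and both $k(\Phi,\c_1)|_{\Gamma_0}$ (via Lemma \ref{face}) and ${\rm Req}(\Phi)|_{\Gamma_0}$ (via Lemma \ref{restriction}) equal $k_{12}$. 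The main subtlety, and the principal reason the argument deserves a separate treatment from the unimodular case, is the correct handling of the second identity in Proposition \ref{pro:difQp}: a shift by $w\in\Gamma_0$ acts on a quasi-polynomial both through the exponential twist $e_y(-w)$ and through the polynomial translation $\tau(w)$, and one must track the coset structure $r(g)=r(y)$ of $T$ over $T_0$ coherently. Once that bookkeeping is in place, the argument transports from ${\rm Par}$ to ${\rm Para}$ without further complication.
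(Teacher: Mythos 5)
Your proposal is correct and follows exactly the route the paper intends: the paper's own proof of Theorem \ref{jumpkgen} consists of the single sentence that the argument is identical to that of Theorem \ref{jumpkuni}, whose three-case analysis via $D(\phi)$ was deliberately written there so as to ``adapt without change to the non unimodular case.'' Your expansion correctly substitutes Proposition \ref{pro:difQp} for Proposition \ref{diffpar} and Lemma \ref{restriction} for Lemma \ref{restrictionuni}, which is precisely the intended transcription.
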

\begin{proof}
The proof is exactly the same as in the proof of Theorem
\ref{jumpkuni} corresponding to the unimodular case.
\end{proof}\bigskip

\begin{example}\label{kb2} We consider the root system of type $B_2$ (see
Figure~\ref{b2}). We will calculate $k(\Phi,\c)$ for all chambers
$\c$ using our formula in Theorem \ref{jumpkgen} iteratively
starting from an exterior chamber.

(i) Jump from the exterior chamber to $\c_1$: We have $E=e^1$,
$\Phi_0=\{e_2\}$, $\Phi^+=\{e_1+e_2,e_1,e_1-e_2\}$ and
$\Phi^-=\emptyset$. With the notation of Definition~\ref{defPol},
$Q=1$(hence $y=0$) and $\ll \phi,E \rr=\pm 1$ for all $\phi \in
\Phi\setminus \Phi_0$.  Then, by Remark~\ref{rem:exp}, ${\rm
Para}(1,\Phi\setminus\Phi_0,E)={\rm Par}(1,\Phi\setminus\Phi_0,E)$.
We get
\[\begin{array}{ll}
k(\Phi,\c_1)(a)-k(\Phi,\c_{\rm ext})(a)&=\Res_{z=0} \left(
\frac{e^{\langle a,x+ze^1
\rangle}}{(1-e^{-(x_1+x_2+z)})(1-e^{-(x_1+z)})(1-e^{-(x_1-x_2+z)})}\right)_{x=0}\\
k(\Phi,\c_1)(a)&=\Res_{z=0}
\left(\frac{e^{a_1z}}{(1-e^{-z})^3}\right)=\frac{1}{2}(a_1+2)(a_1+1).
\end{array}\]
(ii) Jump from $\c_1$ to $\c_2$: We have $E=e^1-e^2$,
$\Phi_0=\{e_1+e_2\}$, $\Phi^+=\{e_1,e_1-e_2\}$ and $\Phi^-=\{e_2\}$.
We also have $Q=k_{12}=1$, thus $y=0$.  Then, the set of feasible $g
\in T$ giving a nontrivial residue at $z=0$ for a summand in ${\rm
Para}(1,\Phi\setminus\Phi_0,E)$ and satisfying $r(g)=0$ is
$\{0,\frac{e^1-e^2}{2}\}$. By Theorem \ref{jumpkgen},
\[\begin{array}{ll}
k(\Phi,\c_2)(a)-k(\Phi,\c_1)(a)&={\rm Para}(1,\Phi\setminus\Phi_0,E)(a)\\
&=\sum_{g=0,g=\frac{e^1-e^2}{2}} \Res_{z=0} \left(\frac{e^{\langle
a,x +2i\pi g +zE\rangle }}{\prod_{\phi\in
\Phi\setminus\Phi_0}(1-e^{-\langle
\phi,x+2i\pi g+zE\rangle })}\right)_{x=0}\\
&=\Res_{z=0}
\left(\frac{e^{(a_1-a_2)z}}{(1-e^{-z})(1-e^{-2z})(1-e^{z})}\right)
+\Res_{z=0}\left(\frac{(-1)^{a_1+a_2}e^{(a_1-a_2)z}}{(1+e^{-z})(1-e^{-2z})(1+e^{z})}\right)\\
&=(-1)^{a_1+a_2}\frac{1}{8}-\frac{1}{8}(2a_2^2-4a_1a_2+2a_1^2+1-4a_2+4a_1)\\
&= \begin{cases} -\frac{1}{4}(a_2-a_1)(a_2-a_1-2) & \text{if $a_1+a_2$ even,}\\
-\frac{1}{4}(a_2-a_1-1)^2 & \text{if $a_1+a_2$ odd} \end{cases}\\
&=-\frac{1}{4}(a_2-a_1-1+(\frac{1+(-1)^{a_1+a_2}}{2}))(a_2-a_1-1-(\frac{1+(-1)^{a_1+a_2}}{2})).
\end{array}\]
Using (i), we get
\[\begin{array}{ll}
k(\Phi,\c_2)(a)&=\frac{1}{2}(a_1+2)(a_1+1)+(-1)^{a_1+a_2}\frac{1}{8}-\frac{1}{8}(2a_2^2-4a_1a_2+2a_1^2+1-4a_2+4a_1)\\
&=\frac{1}{4}a_1^2+\frac{1}{2}a_1a_2-\frac{1}{4}a_2^2+a_1+\frac{1}{2}a_2+\frac{7}{8}+(-1)^{a_1+a_2}\frac{1}{8}.
\end{array}\]
(iii) Jump from $\c_2$ to $\c_3$:  We have $E=e^2$,
$\Phi_0=\{e_1\}$, $\Phi^+=\{e_2,e_1+e_2\}$ and $\Phi^-=\{e_1-e_2\}$.
We again have $Q=k_{23}=1$ and $\ll \phi,E \rr=\pm 1$ for all $\phi
\in \Phi\setminus \Phi_0$.  Thus, $y=g=0$ and (as in part (i)) we
can use the formula for the unimodular case:
\[\begin{array}{ll}
k(\Phi,\c_2)(a)-k(\Phi,\c_3)(a) &=\Res_{z=0} \left( \frac{e^{\langle
a,x +zE\rangle }}{\prod_{\phi\in \Phi\setminus\Phi_0}(1-e^{-\langle
\phi,x+zE\rangle })}\right)_{x=0}\\
&=\Res_{z=0}
\left(\frac{e^{a_2z}}{(1-e^{-z})(1-e^{-z})(1-e^{z})}\right)=-\frac{1}{2}a_2(a_2+1).
\end{array}\]
Then,
$k(\Phi,\c_3)(a)=\frac{1}{4}a_1^2+\frac{1}{2}a_1a_2+\frac{1}{4}a_2^2+a_1+a_2+\frac{7}{8}+(-1)^{a_1+a_2}\frac{1}{8}.$
\end{example}

\subsection{Generalized Khovanskii-Pukhlikov differential operator}
Here $\Phi$ is a general sequence (not necessarily unimodular). We assume again that
$\Z\Phi=\Gamma$. We choose the measure $dx$ giving volume $1$ to $U/\Gamma^*$.
We write $v(\Phi,\c,dx)=v(\Phi,\c)$.

For the complex number $\zeta$, define ${\rm Todd}(\zeta,z)$ as
the expansion of
$$\frac{z}{1-\zeta^{-1} e^{-z}}$$
into a power series in $z$.
If $\zeta\neq 1$,  \begin{equation}\label{divi}
\frac{{\rm Todd}(\zeta,z)}{z}= \frac{1}{1-\zeta^{-1} e^{-z}}
\end{equation}
is analytic at $z=0$.

For $\phi\in \Phi,$ ${\rm Todd}(\zeta,\partial(\phi))$ is a
differential operator of infinite order with constant coefficients.
If  $p$ is a polynomial  function on $V$, ${\rm
Todd}(\zeta,\partial(\phi))p$ is well defined and is a polynomial on
$V$.

For $g\in T=U/\Gamma^*$,
define the {\sl Todd operator} (a series of differential operators
with constant coefficients) by
$${\rm Todd}(g,\Phi,\partial):=\prod_{k=1}^N
{\rm Todd}(e_g(\phi_k),\partial(\phi_k)),$$ where
$e_g(\phi_k):=e^{2i\pi \langle g,\phi_k\rangle}$.

If $G$ is a finite subset of $T$,  we denote by ${\rm
Todd}(G,\Phi,\partial)$ the operator defined on polynomial functions
$v(a)$ on $V$ by
$$({\rm Todd}(G,\Phi,\partial)v)(a)=\sum_{g\in G}
e_g(a)({\rm Todd}(g,\Phi,\partial)v)(a).$$

Let $g\in T$, and define $\Phi(g)=\{\phi\in \Phi\, |\, e_g(\phi)=1\}$.
If $\Phi(g)$ do not generate $V$, it follows from  Corollary \ref{cor:dah} that
$(\prod_{\phi\notin \Phi(g)} \partial(\phi)) v(\Phi,\c)=0$.
Thus ${\rm Todd}(g,\Phi,\partial)v(\Phi,\c)=0$. Indeed  ${\rm Todd}(g,\Phi,\partial)$
is divisible by  $(\prod_{\phi\notin \Phi(g)} \partial(\phi))$ as follows from Equation (\ref{divi}) above.
\begin{definition}
Define  $$G(\Phi)=\{g\in T\, |\, <\Phi(g)>= V\}.$$
\end{definition}
The set $G(\Phi)$ is finite. Indeed, if $g\in G(\Phi)$, there must
exists a basis $\sigma$ of $V$ extracted from $\Phi$ such that
$e_\phi(g)=1$ for all $\phi\in \sigma$, and this gives  a finite
set of solutions. If $\Phi$ is unimodular, then $G(\Phi)$ is
reduced to the identity element.

The following result has been proven in \cite{BriVer97}.
\begin{theorem}\label{th:toddgeneral}
Let $\c$ be a chamber. Then $$k(\Phi,\c)(a)= {\rm
Todd}(G(\Phi),\Phi,\partial)\cdot v(\Phi,\c).$$
\end{theorem}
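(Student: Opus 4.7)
The plan is to mimic the structure of the proof of Theorem \ref{todduni} from Section \ref{sec:kho}, but using the generalized operators ${\rm Todd}(G(\Phi),\Phi,\partial)$ and ${\rm Para}(\cdot,\Psi,E)$ in place of their unimodular counterparts. I proceed by induction on the dimension $r=\dim V$, which allows me to assume the theorem holds for $\Phi_0$ on $W$, and then by a second induction across walls starting from the exterior chamber, where both $v(\Phi,\c_{{\rm ext}})$ and $k(\Phi,\c_{{\rm ext}})$ vanish.

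Fix adjacent chambers $\c_1,\c_2$ separated by a wall $W$ determined by $E$, and assume inductively that $k(\Phi,\c_2)={\rm Todd}(G(\Phi),\Phi,\partial)\,v(\Phi,\c_2)$. By Theorem \ref{jumpvol},
$$v(\Phi,\c_1)-v(\Phi,\c_2)={\rm Pol}(v_{12},\Phi\setminus\Phi_0,E),$$
by Theorem \ref{jumpkgen},
$$k(\Phi,\c_1)-k(\Phi,\c_2)={\rm Para}(k_{12},\Phi\setminus\Phi_0,E),$$
and the dimension induction on $W$ gives $k_{12}={\rm Todd}(G(\Phi_0),\Phi_0,\partial)\,v_{12}$. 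The step across the wall therefore reduces to the key identity
\begin{equation}\label{keyid}
{\rm Todd}(G(\Phi),\Phi,\partial)\,{\rm Pol}(v_{12},\Phi\setminus\Phi_0,E)={\rm Para}(k_{12},\Phi\setminus\Phi_0,E),
\end{equation}
the natural non-unimodular analog of Proposition \ref{toddpol}.

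To prove (\ref{keyid}) I work one $g\in G(\Phi)$ at a time. Choosing once and for all a splitting $V=W\oplus \R F$ and taking as $V_{12}$ the trivial extension of $v_{12}$ along the $F$-direction (so that $\partial(\phi)V_{12}=\partial(\phi)v_{12}$ for every $\phi\in W$), one commutes $e_g(a)\,{\rm Todd}(g,\Phi,\partial)$ past the residue, applies $P(\partial_a)e^{\langle a,x+zE\rangle}=P(x+zE)e^{\langle a,x+zE\rangle}$, and uses $\langle\phi,E\rangle=0$ for $\phi\in\Phi_0$ to obtain
$$e_g(a)\,{\rm Todd}(g,\Phi,\partial){\rm Pol}(v_{12},\Phi\setminus\Phi_0,E)(a)=\Res_{z=0}\left(V_{12}(\partial_x)\,T_{r(g)}(x)\cdot\frac{e^{\langle a,x+2i\pi g+zE\rangle}}{\prod_{\phi\in\Phi\setminus\Phi_0}(1-e^{-\langle\phi,x+2i\pi g+zE\rangle})}\right)_{x=0},$$
where
$$T_y(x)=\prod_{\phi\in\Phi_0}\frac{\langle\phi,x\rangle}{1-e^{-\langle\phi,x\rangle-2i\pi\langle y,\phi\rangle}}$$
depends only on $y=r(g)\in T_0$, since every $\phi\in\Phi_0$ is orthogonal to $E$. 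Testing the formal identity
$$V_{12}(\partial_x)\,T_y(x)\,G(x)|_{x=0}=K_{12,y}(\partial_x)\,G(x)|_{x=0},\qquad K_{12,y}:={\rm Todd}(y,\Phi_0,\partial)\,V_{12},$$
on the exponentials $G(x)=e^{\langle b,x\rangle}$ via $P(\partial_a)e^{\langle a,x\rangle}=P(x)e^{\langle a,x\rangle}$ shows that both sides equal $K_{12,y}(b)$, and the identity then extends by linearity to any analytic germ $G$. Applied to the residue integrand above (viewed as a power series in $x$ at $0$), it converts the summand into one of the form appearing in Definition \ref{defPolquasi}. Summing over $g\in G(\Phi)$ and regrouping by $y=r(g)$ produces a double sum formally resembling ${\rm Para}(k_{12},\Phi\setminus\Phi_0,E)(a)$.

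The main obstacle is matching this regrouped sum with the sum of Definition \ref{defPolquasi}, which ranges over \emph{all} $y\in T_0$ and all $g\in T$ with $r(g)=y$. Two discrepancies must be controlled. First, for $g\in T\setminus G(\Phi)$ with $r(g)=y\in G(\Phi_0)$, the fact that $\Phi_0(y):=\{\phi\in\Phi_0\mid e_y(\phi)=1\}$ already spans $W$ combined with $g\notin G(\Phi)$ forces $\Phi(g)\cap(\Phi\setminus\Phi_0)=\emptyset$ (otherwise any additional $\phi$ would complete $\Phi_0(y)$ to a spanning subset of $V$), and no factor in the product over $\Phi\setminus\Phi_0$ then vanishes at $z=0$, so the residue is zero. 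Second, for $g\in G(\Phi)$ with $y=r(g)\notin G(\Phi_0)$ the factor $T_y(x)$ contains the prefactor $\prod_{\phi\in\Phi_0\setminus\Phi_0(y)}\langle\phi,x\rangle$, and iterating Lemma \ref{Pdiff} transforms $V_{12}(\partial_x)$ into $(\prod_{\phi\in\Phi_0\setminus\Phi_0(y)}\partial(\phi))V_{12}(\partial_x)$; but with the chosen extension this equals the $F$-direction-trivial extension of $(\prod_{\phi\in\Phi_0\setminus\Phi_0(y)}\partial(\phi))v_{12}$, which vanishes identically by Corollary \ref{cor:dah} applied to $v(\Phi_0,dw,\c_{12})$ since $\Phi_0(y)$ does not span $W$. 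Once both claims are verified the two sums agree term by term, (\ref{keyid}) follows, and the wall-crossing induction propagates the identity $k(\Phi,\c)={\rm Todd}(G(\Phi),\Phi,\partial)\,v(\Phi,\c)$ from $\c_{{\rm ext}}$ to every chamber of $C(\Phi)$.
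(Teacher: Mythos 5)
Your argument is correct and follows essentially the same route as the paper's: the double induction (on $\dim V$ and across walls from the exterior chamber) reduces the theorem to the identity ${\rm Todd}(\cdot,\Phi,\partial)\,{\rm Pol}(v_{12},\Phi\setminus\Phi_0,E)={\rm Para}(k_{12},\Phi\setminus\Phi_0,E)$, which is exactly the paper's Theorem \ref{toddgen}, proved there by the same commutation of the Todd operator through the residue and the same use of the induction hypothesis on $\Phi_0$. The one organizational difference is that the paper sums the Todd operator over all of $T$ (via the convention ${\rm Todd}(T,\Phi,\partial)$), which makes the double sum match Definition \ref{defPolquasi} automatically, whereas you sum over $G(\Phi)$ and therefore must check by hand that the two families of discrepancy terms vanish --- a verification you carry out correctly using Remark \ref{rem:exp} and Corollary \ref{cor:dah}, and which the paper leaves implicit.
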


Here we will give yet another proof of this theorem, by verifying
that the explicit formula for the jumps are related by the Todd
operator.

For the proof, it is easier to sum over `all elements'  $t$ of $T$.
If $v$ is a polynomial function on $V$ such that
\begin{equation}\label{H}
{\rm Todd}(t,\Phi,\partial)\cdot v=0 \mbox{ except for a finite number of elements $t$,}
\end{equation}
we may define
$${\rm
Todd}(T,\Phi,\partial)v(a)=\sum_{t\in T}e_t(a){\rm
Todd}(t,\Phi,\partial)\cdot v, $$ being understood that we only sum
over the finite subset of $t\in T$ such that ${\rm
Todd}(t,\Phi,\partial)\cdot v\neq 0$. With this definition, for
$v=v(\Phi,\c)$, then
$${\rm
Todd}(T,\Phi,\partial)v={\rm
Todd}(G(\Phi),\Phi,\partial)v .$$

To prove Theorem \ref{th:toddgeneral}, we follow the same scheme of
proof as in Theorem \ref{todduni}.

Let $W$ be a wall and  let $\c_{0}$ be a chamber of the wall for the
sequence $\Phi_0=\Phi\cap W$. We only need to prove
\begin{theorem}\label{toddgen}
\begin{equation}\label{eqKP}
{\rm Todd}(T,\Phi,\partial){\rm Pol}(v(\Phi_0,\c_0),\Phi\setminus
\Phi_0,E)={\rm Para}(k(\Phi_0,\c_0), \Phi\setminus \Phi_0,E).
\end{equation}
\end{theorem}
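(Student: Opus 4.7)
The plan is to establish Eq.~(\ref{eqKP}) by directly transforming the residue formula defining ${\rm Pol}$ into the sum-of-residues defining ${\rm Para}$, using as inductive hypothesis that Theorem \ref{th:toddgeneral} has already been proved on the hyperplane $W$ for the system $\Phi_0$. Granting Eq.~(\ref{eqKP}), Theorem \ref{th:toddgeneral} then follows by a double induction: on $r = \dim V$, and within a fixed $V$, by starting from the exterior chamber (where both $v(\Phi,\c_{\rm ext})$ and $k(\Phi,\c_{\rm ext})$ vanish) and crossing walls using Theorem \ref{jumpvol} on the volume side and Theorem \ref{jumpkgen} on the partition side, the identity~(\ref{eqKP}) ensuring that the two jumps correspond under the Todd operator.

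The first step is to factor the Todd operator across the partition $\Phi = \Phi_0 \sqcup \Psi$ with $\Psi := \Phi \setminus \Phi_0$. For each $t \in T$, write $y := r(t) \in T_0$. Because every $\phi_0 \in \Phi_0$ lies in $W$, the pairing $\langle \phi_0, t\rangle$ depends only on $y$, so $e_t(\phi_0) = e_y(\phi_0)$, giving
$${\rm Todd}(t,\Phi,\partial) = {\rm Todd}(t,\Psi,\partial)\cdot {\rm Todd}(y,\Phi_0,\partial).$$
Since each $\phi_0 \in \Phi_0$ lies in $W$, Proposition \ref{diffPv}, applied as a power series in the derivatives $\partial(\phi_0)$ (which terminates when acting on a polynomial), lets us push the $\Phi_0$-factor inside the ${\rm Pol}$ formula:
$${\rm Todd}(y,\Phi_0,\partial)\cdot {\rm Pol}(v(\Phi_0,\c_0),\Psi,E) = {\rm Pol}(p_y,\Psi,E), \qquad p_y := {\rm Todd}(y,\Phi_0,\partial)\, v(\Phi_0,\c_0).$$
By the inductive hypothesis applied to $(\Phi_0,\c_0)$ on $W$, the quasi-polynomial $k(\Phi_0,\c_0)$ admits the expansion $k(\Phi_0,\c_0)(w) = \sum_{y \in T_0} e_y(w)\, p_y(w)$, which is exactly the data entering Definition \ref{defPolquasi} through any extension to $\Gamma$.

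It remains to compute $e_t(a)\,{\rm Todd}(t,\Psi,\partial_a)\,{\rm Pol}(p_y,\Psi,E)(a)$ and sum over $t \in T$ with $r(t) = y$. Using $\partial_a(\psi) e^{\langle a,x+zE\rangle} = \langle \psi,x+zE\rangle\, e^{\langle a,x+zE\rangle}$ together with ${\rm Todd}(\zeta,w)/w = 1/(1 - \zeta^{-1} e^{-w})$, each linear denominator $\langle \psi,x+zE\rangle$ in the ${\rm Pol}$ integrand is converted into $1 - e_t(\psi)^{-1} e^{-\langle \psi,x+zE\rangle} = 1 - e^{-\langle \psi,x+2i\pi t+zE\rangle}$. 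Combining this with the prefactor $e_t(a) = e^{\langle a,2i\pi t\rangle}$, which shifts the numerator exponential, one obtains
$$e_t(a)\,{\rm Todd}(t,\Psi,\partial_a)\,{\rm Pol}(p_y,\Psi,E)(a) = \Res_{z=0}\left(P_y(\partial_x)\cdot \frac{e^{\langle a,x+2i\pi t+zE\rangle}}{\prod_{\psi\in\Psi}(1 - e^{-\langle \psi,x+2i\pi t+zE\rangle})}\right)_{x=0}.$$
Grouping the resulting sum over $t \in T$ by $y = r(t)$ and then summing over $y \in T_0$ reproduces exactly the expression in Definition \ref{defPolquasi} for ${\rm Para}(k(\Phi_0,\c_0),\Psi,E)(a)$.

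The main technical obstacle is the a priori infinite fibre $r^{-1}(y)$: both ${\rm Todd}(T,\Phi,\partial)$ (made sensible via condition~(\ref{H})) and the inner sum in Definition \ref{defPolquasi} are infinite, and one must verify that they have the same finite effective support. As Remark \ref{rem:exp} shows, the residue at $z=0$ is nonzero only when some $\psi \in \Psi$ satisfies $e^{2i\pi\langle\psi,t\rangle} = 1$, which, for $r(t)=y$ fixed, leaves finitely many candidates $t$. Checking that this finiteness matches on the two sides, and justifying the interchanges of summation with residue and with the polynomial-differential operators $P_y(\partial_x)$, completes the argument.
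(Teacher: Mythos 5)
Your proposal is correct and follows essentially the same route as the paper's own proof: factor ${\rm Todd}(t,\Phi,\partial)$ as ${\rm Todd}(r(t),\Phi_0,\partial)\,{\rm Todd}(t,\Phi\setminus\Phi_0,\partial)$, push the $\Phi_0$-part inside ${\rm Pol}$ via Proposition \ref{diffPv}, invoke the inductive validity of Theorem \ref{th:toddgeneral} on $W$ to identify $\sum_{y}e_y\,{\rm Todd}(y,\Phi_0,\partial)v(\Phi_0,\c_0)$ with an extension of $k(\Phi_0,\c_0)$, convert the linear denominators into the exponential ones with the $\Psi$-part of the Todd operator, and sum over the fibres of $r$ to recover Definition \ref{defPolquasi}. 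The finiteness and interchange issues you flag at the end are handled in the paper exactly as you suggest, via hypothesis (\ref{H}) and Remark \ref{rem:exp}.
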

\begin{proof}
It is easy to see that the function $v:={\rm Pol}(v(\Phi_0,\c_0),\Phi\setminus
\Phi_0,E) $ satisfies the hypothesis (\ref{H}) above.
Let $V_0$ be a polynomial function on $V$
extending $v_0=v(\Phi_0, \c_0)$. Then
$({\rm Todd}(T,\Phi,\partial)v)(a)$ is equal to
$$
\sum_{t_0\in T_0}\sum_{t\in T\,|\, r(t)=t_0}e_t(a)({\rm
Todd}(t_0,\Phi_0,\partial) {\rm
Todd}(t,\Phi\setminus\Phi_0,\partial)\cdot v)(a),$$ where $T_0$
denotes the torus $W^*/\Gamma_0^*$. We have
$$e_t(a)({\rm Todd}(t,\Phi\setminus\Phi_0,\partial)v)(a)=\Res_{z=0}
\left(V_0(\partial_x)\cdot
\frac{e^{\langle a,x+2i\pi t+ z E\rangle }}{\prod_{\phi\in
\Phi\setminus \Phi_0}(1-e^{-\langle \phi,x+2i\pi t+zE\rangle}) }\right)_{x=0},$$
so that $({\rm Todd}(T,\Phi,\partial)v)(a)$ is equal to
$$=\sum_{t_0\in T_0}\sum_{t\in T\,| r(t)=t_0}\Res_{z=0}
\left(({\rm Todd}(t_0,\Phi_0,\partial)V_0)(\partial_x)
\frac{e^{\langle a,x+2i\pi t+ z E\rangle }}{\prod_{\phi\in
\Phi\setminus \Phi_0}(1-e^{-\langle \phi,x+2i\pi t+zE\rangle}) }\right)_{x=0}.$$

By induction hypothesis, a quasi-polynomial function extending
$k_0(\Phi_0,\c_0)$ is $$\sum_{t_0\in T_0}e_{t_0}(a) ({\rm
Todd}(t_0,\Phi_0,\partial)V_0)(a)$$ where we denote again by  $t_0$
any element of $T$ such that $r(t)=t_0$. Thus the last formula is
exactly the definition of ${\rm
Para}(k_0(\Phi_0,\c_0),\Phi\setminus\Phi_0,E)(a).$

The rest of the proof is identical to the proof of Theorem \ref{todduni}.
\end{proof}\bigskip

\section{Some examples}

In this section we give further examples of jumps in partition and
volume for various root systems.

\begin{example} We will calculate the jump in volume from
$\c_1:=C(\{e_1,e_1-e_2,e_1-e_3\}$ to
$\c_2:=C(\{e_1,e_1-e_2,e_1+e_3\})$ of $B_3$ (see Figure \ref{b3}
where chambers of $B_3$ are depicted via the intersection of the
ray $\R^+\alpha$  of each root with the plane $3a_1+2a_2+a_3=1$).
\begin{figure}
\begin{center}
  \includegraphics[width=60mm]{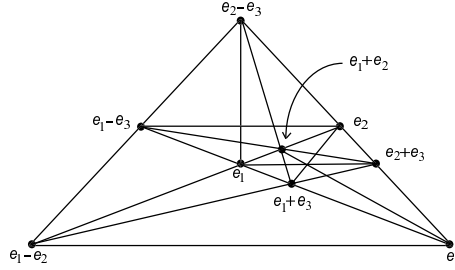}\\
  \caption{Chambers of $B_3$}\label{b3}
\end{center}
\end{figure}

We consider the copy $B_2$ in $B_3$ having the set
$\{e_1,e_2,e_1+e_2,e_1-e_2\}$ as its set of positive roots. This
particular jump is over the chamber $C(\{e_1,e_1-e_2\})$ of this
$B_2$. We have $E=e^3$, $\Phi^+=\{e_3,e_1+e_3,e_2+e_3\}$ and
$\Phi^-=\{e_1-e_3, e_2-e_3\}$. Using part (iii) of
Example~\ref{b2vol}, $v_{12}(a)=\frac{1}{4}(a_1+a_2)^2$. By
Theorem~\ref{jumpvol},
\[\begin{array}{l}
v(\Phi,\c_2)(a)-v(\Phi,\c_1)(a)={\rm Pol}(v_{12},\Phi\setminus \Phi_0,E)(a)\\
=\Res_{z=0} \left(v_{12}(\partial_{x}) \cdot
\frac{e^{a_1x_1+a_2x_2+a_3(z+x_3)}}{(x_3+z)(x_1+x_3+z)(x_2+x_3+z)(x_1-x_3-z)(x_2-x_3-z)}\right)_{x=0}\\
= \frac{1}{1440}a_3^4(30a_1a_2+15a_1^2+15a_2^2+2a_3^2).
\end{array}\]
\end{example}

\begin{example} We will calculate the jump in volume from
$\c_2=C(\{e_1,e_1-e_2,e_1+e_3\})$ to
$\c_3:=C(\{e_1,e_1+e_3,e_2-e_3\})\cap C(\{e_1,e_1+e_3,e_2+e_3\})$ of
$B_3$ (see Figure~\ref{b3}). We consider the copy $B_2$ in $B_3$
having the set $\{e_1,e_3,e_1+e_3,e_1-e_3\}$ as its set of positive
roots. This particular jump is over chamber $C(\{e_1,e_1+e_3\})$ of
this $B_2$. We have $E=e^2$,
$\Phi^+=\{e_2,e_2+e_3,e_2-e_3,e_1+e_2\}$ and $\Phi^-=\{e_1-e_2\}$.
Using part (ii) of Example~\ref{b2vol},
$v_{23}(a)=\frac{1}{4}(a_1+a_3)^2-\frac{1}{2}a_3^2$.  By
Theorem~\ref{jumpvol},
\[\begin{array}{ll}
v(\Phi,\c_3)(a)-v(\Phi,\c_2)(a)&={\rm Pol}(v_{23},\Phi\setminus \Phi_0,E)(a)\\
&=\Res_{z=0} \left(v_{23}(\partial_{x}) \cdot
\frac{e^{a_1x_1+a_2z+a_3x_3}}{z(x_3+z)(-x_3+z)(x_1+z)(x_1-z)}\right)_{x=0}\\
&= -\frac{1}{96}a_2^4(a_1^2+2a_1a_3-a_3^2)
\end{array}\]
\end{example}

\begin{example} We will calculate the jump in the partition function from
$\c_1=C(\{e_1,e_1-e_2,e_1-e_3\}$ to
$\c_2=C(\{e_1,e_1-e_2,e_1+e_3\})$ of $B_3$ (see Figure ~\ref{b3}).
This particular jump is over the chamber $C(\{e_1,e_1-e_2\})$ of the
copy of $B_2$ in $B_3$ having positive roots
$\Phi_0=\{e_1,e_2,e_1+e_2,e_1-e_2\}$. We have $E=e^3$,
$\Phi^+=\{e_3,e_1+e_3,e_2+e_3\}$, $\Phi^-=\{e_1-e_3, e_2-e_3\}$.

Using part (iii) of Example~\ref{kb2},
$k_{12}(a)=\frac{1}{4}(a_1+a_2)^2+a_1+a_2+\frac{7}{8}+(-1)^{a_1+a_2}\frac{1}{8}$.
Then, with the notation of Section 6, we have $y=0$ or
$y=\frac{e^1+e^2}{2}$; correspondingly $Q_0(a)=
\frac{1}{4}(a_1+a_2)^2+a_1+a_2+\frac{7}{8}$ and
$Q_{\frac{e^1+e^2}{2}}(a)=\frac{1}{8}$.  For $y=0$, the only
feasible $g \in T$ giving a nontrivial residue at $z=0$ for a
summand in ${\rm Para}(k_{12},\Phi\setminus\Phi_0,E)$ and satisfying
$r(g)=0$ is $g=0$. On the other hand, for $y=\frac{e^1+e^2}{2}$ the
feasible set of $g$ giving a nontrivial residue at $z=0$ for a
summand in ${\rm Para}(k_{12},\Phi\setminus\Phi_0,E)$ and satisfying
$r(g)=\frac{e^1+e^2}{2}$ is $\{\frac{e^1+e^2}{2},
\frac{e^1+e^2+e^3}{2}\}$. Then, by Theorem \ref{jumpkgen},

\[\begin{array}{l}
k(\Phi,\c_2)(a)-k(\Phi,\c_1)(a)={\rm Para}(k_{12},\Phi\setminus \Phi_0,E)(a)\\
=\Res_{z=0} \left(Q_0(\partial_{x})\cdot \frac{e^{\langle a,x
+ze^3\rangle}}{\prod_{\phi\in \Phi^+\cup \Phi^-}(1-e^{-\langle
\phi,x+ze^3\rangle})}\right)_{x=0}\\
+\Res_{z=0} \left(Q_{\frac{e^1+e^2}{2}}(\partial_{x})\cdot
\frac{e^{\langle a,x +i\pi(e^1+e^2)+ze^3\rangle }}{\prod_{\phi\in
\Phi^+\cup\Phi^-}(1-e^{-\langle \phi,x+i\pi(e^1+e^2)+ze^3\rangle })}\right)_{x=0}\\
+\Res_{z=0} \left(Q_{\frac{e^1+e^2}{2}}(\partial_{x})\cdot
\frac{e^{\langle a,x +i\pi(e^1+e^2+e^3)+ze^3\rangle
}}{\prod_{\phi\in \Phi^+\cup\Phi^-}(1-e^{-\langle \phi,x+i\pi(e^1+e^2+e^3)+ze^3\rangle })}\right)_{x=0}\\
=\Res_{z=0} \left(Q_0(\partial_{x})\cdot
\frac{e^{a_1x_1+a_2x_2+a_3z}}{(1-e^{-z})(1-e^{-(x_1+z)})(1-e^{-(x_2+z)})
(1-e^{-(x_1-z)})(1-e^{-(x_2-z)})}\right)_{x=0}\\
+\frac{1}{8}\Res_{z=0} \left(
\frac{(-1)^{a_1+a_2}e^{a_3z}}{(1-e^{-z})(1+e^{-z})^2(1+e^{z})^2}\right)+\frac{1}{8}\Res_{z=0}
\left( \frac{(-1)^{a_1+a_2+a_3}e^{a_3z}}{(1+e^{-z})(1-e^{-z})^2
(1-e^{z})^2}\right)\\
=\frac{1}{2880}a_3(a_3-1)(a_3+2)(a_3+1)(4a_3^2+4a_3+30a_1^2+60a_2a_1+30a_2^2+441+240a_1+240a_2)\\
+(-1)^{a_1+a_2}\frac{1}{128}+(-1)^{a_1+a_2+a_3}\frac{1}{384}(2a_3+1)(2a_3^2+2a_3-3).
\end{array}\]

Let $\gamma_3:=\frac{1-(-1)^{a_3}}{2}$ and
$\gamma_{12}:=\frac{1-(-1)^{a_1+a_2}}{2}$.  Then, after some
calculation, we can factor $k(\Phi,\c_2)(a)-k(\Phi,\c_1)(a)$ as:
\[\frac{1}{2880}(a_3-\gamma_3)(a_3+2-\gamma_3)\\
\cdot
\left((1-\gamma_3)(f_1-30(1-\gamma_{12})(1-2a_3))+\gamma_3(f_2-30(1-\gamma_{12})(3+2a_3))\right),\]
where
\[\begin{array}{ll}
f_1=&4a_3^4+4a_3^3+30a_1^2a_3^2+60a_2a_1a_3^2+240a_2a_3^2+30a_2^2a_3^2+437a_3^2+240a_3^2a_1\\
    &-34a_3-426-60a_2a_1-30a_1^2-240a_2-30a_2^2-240a_1\\
f_2=&4a_3^4+12a_3^3+30a_1^2a_3^2+60a_2a_1a_3^2+240a_2a_3^2+30a_2^2a_3^2+449a_3^2+240a_3^2a_1\\
  &+60a_2^2a_3+60a_1^2a_3+480a_3a_2+120a_2a_1a_3+912a_3+480a_1a_3+45
\end{array}\]
\end{example}

\begin{example} Let $\c_{\rm nice}$ denote the interior of the cone generated
by the roots $\{e_i-e_{r+1},\;1\leq i \leq r \}$ of $A_r$. With
the notation of Section \ref{sec:jumpuni} , $a=\sum_{i=1}^r
a_i(e_i-e_{r+1})$ is in $\c_{\rm nice}$ if and only if $a_i>0$ for
all $1\leq i \leq r$. Then, the copy of $A_{r-1}$ (with positive
roots $\{e_i-e_j: 2\leq i<j \leq r+1\}$) in $A_r$ can be thought
as the hyperplane $W$ with $\c_1=c_{\rm ext}$, $\c_2=\c_{\rm
nice}(A_{r})$ and $\c_{12}=\c_{\rm nice}(A_{r-1})$.  Together with
the fact that $k(A_{r-1},\c_{\rm nice})(a)$ is independent of
$a_{r-1}$, we have by Theorem~\ref{jumpkuni},
\[\begin{array}{ll}
k(A_r,\c_{{\rm nice}})(a)&={\rm Par}(k(A_{r-1},\c_{\rm nice}),\{
e_1-e_2,\ldots,e_1-e_{r+1}\},e^1)(a)\\
&=\Res_{z=0}\left(k(A_{r-1},\c_{\rm nice})(\partial_{x_2},\dots,
\partial_{x_{r-1}}) \cdot
\frac{e^{a_1z+a_2x_2+\cdots+a_{r-1}x_{r-1}}}{(1-e^{x_2-z})
\cdots(1-e^{x_{r-1}-z})(1-e^{-z})^2}\right)_{x=0}.
\end{array}\]

For example, using $k(A_2,\c_{\rm nice})(a)=a_1+1$ (part (i) of
Example~\ref{ka2}),
\[
\begin{array}{ll}
k(A_3,\c_{{\rm nice}})(a)&=\Res_{z=0}\left((\partial_{x_2}+1) \cdot
\frac{e^{a_1z+a_2x_2}}{(1-e^{-z})^2(1-e^{x_2-z})}\right)_{x=0}\\
&=\frac{1}{6}(a_1+2)(a_1+1)(a_1+3a_2+3).
\end{array}\]
 We can iteratively calculate,
\[
\begin{array}{ll}
k(A_4,\c_{{\rm nice}})(a)&=\Res_{z=0}\left(
\frac{1}{6}(\partial_{x_2}+2)(\partial_{x_2}+1)(\partial_{x_2}+3\partial_{x_3}+3)\cdot
\frac{e^{a_1z+a_2x_2+a_3x_3}}{(1-e^{-z})^2(1-e^{x_2-z})(1-e^{x_3-z})}\right)_{x=0}\\
&=\frac{1}{360}(a_1+3)(a_1+2)(a_1+1)(a_1+3+a_2+3a_3)\\
&.(a_1^2+9a_1+5a_1a_2+10a_2^2+20+30a_2).
\end{array}\]
\end{example}

In a similar fashion, using Theorem \ref{jumpvol}, we can calculate
$v(A_n, c_{\rm nice})$ iteratively. The computation of $v(A_7,
c_{{\rm nice}})$ took 12 seconds. The result is too big to be
written here.

Recall that Baldoni-Beck-Cochet-Vergne \cite{bbcv} can compute
individual numbers $k(A_n)(a)$ for $a$ fixed, for $n=10$ in less
than $30$ minutes. The full polynomial  $k(A_n,c_{{\rm nice}})$ is
computed in $7$ minutes  when $n=7$ and $30$ minutes   when $n=8$
on a $1,13$GHz computer. The method of Baldoni-Beck-Cochet-Vergne
uses an arbitrary order on roots. The method of calculation which
follows from wall crossing formulae seems   less efficient, but it
may give some light on the best order strategy and the complexity
of calculations.


\begin{thebibliography}{99}

\bibitem{bbcv}{\bf Baldoni V., Beck M., Cochet C. and Vergne M.}
{\em Volume computation for polytopes and partition functions for
classical root systems}, Discrete and Computational Geometry, vol.35
(2006) 551-595.

\bibitem{bsjv}{\bf Baldoni V., de Loera J. and Vergne M.}{\em Counting Integer Flows in Networks},
Foundations of Computational Mathematics 4 (2004) 277-314.

\bibitem{BriVer97}{\bf Brion M. and Vergne M.},
{\em Residue formulae, vector partition functions and lattice points
in rational polytopes}, J. Amer. Math Soc. {\bf 10} no.~4  (1997),
797--833.

\bibitem{dm}{\bf Dahmen W. and Micchelli C.}, {\em The number of solutions to linear Diophantine equations and multivariate splines}, Trans. Amer. Math. Soc. {\bf 308} (1988), 509-532.

\bibitem{dp}{\bf De Concini C. and Procesi C.},{\em Topics in hyperplane arrangements, polytopes and
box splines} To appear (available on the personal web page of C. Procesi).


\bibitem{ve08}{\bf De Concini C., Procesi C.  and Vergne M.},
{\em Partition functions and Dahmen-Miccelli quasi-polynomials},
In preparation.


\bibitem{KP92}{\bf Khovanskii A.G. and Pukhlikov A.V.}, {\em The
Riemann-Roch theorem for integrals and sums of quasi-polynomials on
virtual polytopes (Russian)}, Algebra i Analiz {\bf 4} (1992),
188-216; transl. in  St. Petersburg Math. J. {\bf 4} (1993),
789-812.

\bibitem{Par}{\bf Paradan P-E.},{\em Jump formulas in Hamiltonian Geometry}, arXiv 0411306.

\bibitem{Sze-ve}{\bf Szenes A. and Vergne M.},{\em Residue formulae for
vector partitions and Euler-MacLaurin sums}, Advances in Applied
Mathematics {\bf 30} (2003), 295-342.

\end{thebibliography}
\end{document}